\DeclareMathAlphabet\gothic{U}{euf}{m}{n}
\def\eqnarray{\stepcounter{equation}\let\@currentlabel=\theequation
\global\@eqnswtrue
\tabskip\@centering\let\\=\@eqncr
$$\halign to \displaywidth\bgroup\hfil\global\@eqcnt\z@
  $\displaystyle\tabskip\z@{##}$&\global\@eqcnt\@ne
  \hfil$\displaystyle{{}##{}}$\hfil
  &\global\@eqcnt\tw@ $\displaystyle{##}$\hfil
  \tabskip\@centering&\llap{##}\tabskip\z@\cr}
\def\endeqnarray{\@@eqncr\egroup
      \global\advance\c@equation\m@ne$$\global\@ignoretrue}
\def\@yeqncr{\@ifnextchar [{\@xeqncr}{\@xeqncr[5pt]}}
\begin{document}

\newtheorem{lemm}{Lemma}[section]
\newtheorem{thrm}[lemm]{Theorem}
\newtheorem{coro}[lemm]{Corollary}
\newtheorem{eeg}[lemm]{Example}
\newtheorem{rrema}[lemm]{Remark}
\newtheorem{prop}[lemm]{Proposition}
\newtheorem{ddefi}[lemm]{Definition}
\newtheorem{stat}[lemm]{{\hspace{-5pt}}}

\newenvironment{rema}{\begin{rrema} \rm}{\end{rrema}}
\newenvironment{eg}{\begin{eeg} \rm}{\end{eeg}}
\newenvironment{defi}{\begin{ddefi} \rm}{\end{ddefi}}

\newcommand{\gota}{\gothic{a}}
\newcommand{\gotb}{\gothic{b}}
\newcommand{\gotc}{\gothic{c}}
\newcommand{\gote}{\gothic{e}}
\newcommand{\gotf}{\gothic{f}}
\newcommand{\gotg}{\gothic{g}}
\newcommand{\gothh}{\gothic{h}}
\newcommand{\gotk}{\gothic{k}}
\newcommand{\gotm}{\gothic{m}}
\newcommand{\gotn}{\gothic{n}}
\newcommand{\gotp}{\gothic{p}}
\newcommand{\gotq}{\gothic{q}}
\newcommand{\gotr}{\gothic{r}}
\newcommand{\gots}{\gothic{s}}
\newcommand{\gott}{\gothic{t}}
\newcommand{\gotu}{\gothic{u}}
\newcommand{\gotv}{\gothic{v}}
\newcommand{\gotw}{\gothic{w}}
\newcommand{\gotz}{\gothic{z}}
\newcommand{\gotA}{\gothic{A}}
\newcommand{\gotB}{\gothic{B}}
\newcommand{\gotG}{\gothic{G}}
\newcommand{\gotL}{\gothic{L}}
\newcommand{\gotS}{\gothic{S}}
\newcommand{\gotT}{\gothic{T}}

\newcounter{teller}
\renewcommand{\theteller}{(\alph{teller})}
\newenvironment{tabel}{\begin{list}%
{\rm  (\alph{teller})\hfill}{\usecounter{teller} \leftmargin=1.1cm
\labelwidth=1.1cm \labelsep=0cm \parsep=0cm}
                      }{\end{list}}

\newcounter{tellerr}
\renewcommand{\thetellerr}{(\roman{tellerr})}
\newenvironment{tabeleq}{\begin{list}%
{\rm  (\roman{tellerr})\hfill}{\usecounter{tellerr} \leftmargin=1.1cm
\labelwidth=1.1cm \labelsep=0cm \parsep=0cm}
                         }{\end{list}}

\newcounter{tellerrr}
\renewcommand{\thetellerrr}{(\Roman{tellerrr})}
\newenvironment{tabelR}{\begin{list}%
{\rm  (\Roman{tellerrr})\hfill}{\usecounter{tellerrr} \leftmargin=1.1cm
\labelwidth=1.1cm \labelsep=0cm \parsep=0cm}
                         }{\end{list}}

\newcounter{proofstep}
\newcommand{\nextstep}{\refstepcounter{proofstep}\vertspace \par 
          \noindent{\bf Step \theproofstep} \hspace{5pt}}
\newcommand{\firststep}{\setcounter{proofstep}{0}\nextstep}

\newcommand{\Ni}{\mathds{N}}
\newcommand{\Qi}{\mathds{Q}}
\newcommand{\Ri}{\mathds{R}}
\newcommand{\Ci}{\mathds{C}}
\newcommand{\Ti}{\mathds{T}}
\newcommand{\Zi}{\mathds{Z}}
\newcommand{\Fi}{\mathds{F}}

\renewcommand{\proofname}{{\bf Proof}}

\newcommand{\remark}{\mbox{\bf Remark} \hspace{5pt}}
\newcommand{\vertspace}{\vskip10.0pt plus 4.0pt minus 6.0pt}

\newcommand{\loc}{\mathrm{loc}}
\newcommand{\R}{\mathrm{Re} \,}
\newcommand{\I}{\mathrm{Im} \,}
\newcommand{\tr}{\mathrm{tr} \,}
\newcommand{\supp}{\mathrm{supp} \,}
\newcommand{\D}{\partial}
\newcommand{\op}{\mathrm{op}}
\newcommand{\dist}{\mathrm{dist} \,}
\newcommand{\cp}{\mathcal{P}}
\newcommand{\cn}{\mathcal{N}}
\newcommand{\cd}{\mathcal{D}}
\newcommand{\one}{\mathds{1}}

\hyphenation{groups}
\hyphenation{unitary}

\newlength{\hightcharacter}
\newlength{\widthcharacter}
\newcommand{\covsup}[1]{\settowidth{\widthcharacter}{$#1$}\addtolength{\widthcharacter}{-0.15em}\settoheight{\hightcharacter}{$#1$}\addtolength{\hightcharacter}{0.1ex}#1\raisebox{\hightcharacter}[0pt][0pt]{\makebox[0pt]{\hspace{-\widthcharacter}$\scriptstyle\circ$}}}
\newcommand{\cov}[1]{\settowidth{\widthcharacter}{$#1$}\addtolength{\widthcharacter}{-0.15em}\settoheight{\hightcharacter}{$#1$}\addtolength{\hightcharacter}{0.1ex}#1\raisebox{\hightcharacter}{\makebox[0pt]{\hspace{-\widthcharacter}$\scriptstyle\circ$}}}
\newcommand{\scov}[1]{\settowidth{\widthcharacter}{$#1$}\addtolength{\widthcharacter}{-0.15em}\settoheight{\hightcharacter}{$#1$}\addtolength{\hightcharacter}{0.1ex}#1\raisebox{0.7\hightcharacter}{\makebox[0pt]{\hspace{-\widthcharacter}$\scriptstyle\circ$}}}

\thispagestyle{empty}

\vspace*{1cm}
\begin{center}
{\Large\bf Degenerate elliptic operators  \\[3pt]
in $L_p$-spaces with complex $W^{2,\infty}$-coefficients} \\[5mm]
\large Tan Duc Do \\[10mm]

\end{center}

\vspace{5mm}

\begin{center}
{\bf Abstract}
\end{center}

\begin{list}{}{\leftmargin=1.8cm \rightmargin=1.8cm \listparindent=10mm 
   \parsep=0pt}
\item
Let $c_{kl} \in W^{2,\infty}(\Ri^d, \Ci)$ for all $k,l \in \{1, \ldots, d\}$.
We consider the divergence form operator 
$
A = - \sum_{k,l=1}^d \D_l (c_{kl} \, \D_k)
$
in $L_2(\Ri^d)$ when the coefficient matrix satisfies $(C(x) \, \xi, \xi) \in \Sigma_\theta$ for all $x \in \Ri^d$ and $\xi \in \Ci^d$, where $\Sigma_\theta$ be the sector with vertex 0 and semi-angle $\theta$ in the complex plane.
We show that for all $p$ in a suitable interval the contraction semigroup generated by $-A$ extends consistently to a contraction semigroup on $L_p(\Ri^d)$.
For those values of $p$ we present a condition on the coefficients such that the space $C_c^\infty(\Ri^d)$ of test functions is a core for the generator on $L_p(\Ri^d)$.
We also examine the operator $A$ separately in the more special Hilbert space $L_2(\Ri^d)$ setting and provide more sufficient conditions such that $C_c^\infty(\Ri^d)$ is a core.
\end{list}

\vspace{2.5cm}
\noindent
May 2016

\vspace{5mm}
\noindent
AMS Subject Classification: 35K65, 47B44.

\vspace{5mm}
\noindent
Keywords: Degenerate elliptic operator, sectorial operator, core, accretive operator,
contraction semigroup.

\vspace{10mm}

\noindent
{\bf Home institution:}    \\[3mm]
Department of Mathematics \\
University of Auckland  \\ 
Private bag 92019 \\ 
Auckland 1142 \\
New Zealand  \\[5pt]
Email: tan.do@auckland.ac.nz

\newpage

\setcounter{page}{1}

\section{Introduction} \label{S4.1}

It has been known for a long time that the space of test functions $C_c^\infty(\Ri^d)$ is always a core for a strongly elliptic second-order differential operator in divergence form with Lipschitz continuous coefficients.
Nevertheless if the operator is merely degenerate elliptic, the situation is very different and it is much more difficult to prove the same type of results.
In fact $C_c^\infty(\Ri^d)$ is no longer a core in general.
Some sharp results are available in one dimension which provide characterisations for when $C_c^\infty(\Ri)$ constitutes a core, such as \cite[Theorem 3.5]{CMP}, \cite[Theorem 1.5]{DE1} and \cite[Theorem 3.3]{Do1}.
However the techniques used to prove these characterisations are intrinsically available in one dimension only.
Up to now extensions of the characterisations to higher dimensions remain widely open problems.
On the other hand, some positive results in higher dimensions are also available.
Wong-Dzung in \cite{WongDzung} proved that if a degenerate elliptic second-order differential operator in divergence form has real-valued $C^2$-coefficients, then the space $C_c^\infty(\Ri^d)$ is a core in $L_p(\Ri^d)$.
The technique used by Wong-Dzung is then refined by Ouhabaz in \cite[Theorem 5.2]{Ouh5} to prove that $C_c^\infty(\Ri^d)$ is a core for operators in $L_2(\Ri^d)$ with real-valued $W^{2,\infty}$-coefficients.
In a recent paper \cite[Propositions 4.1, 4.5, 4.6 and Theorem 4.8]{ERS5}, ter Elst, Robinson and Sikora showed the core property for the case when the coefficients are real-valued and have a mixture of smoothness between $W^{1,\infty}(\Ri^d)$ and $W^{2,\infty}(\Ri^d)$.

Apart from the interests in the core property for degenerate elliptic second-order differential operators with bounded coefficients, a large part of the literature is devoted to showing sufficient conditions under which the space of test functions is still a core for operators with real-valued coefficients which are singular either locally or at infinity.
Many interesting results can be found in \cite{Kat7}, \cite{Dav14}, \cite{Lis1}, \cite{MPPS}, \cite{MPRS}, \cite{COCCHJLY}, \cite{MR3210962} and references therein.

In this paper we investigate degenerate elliptic second-order differential operators with bounded complex-valued coefficients.
We will provide sufficient conditions for when $C_c^\infty(\Ri^d)$ is a core for these operators.
The results are generalisations of those in \cite[Theorem I]{WongDzung} and \cite[Theorem 5.2]{Ouh5}.

Let $d \in \Ni$ and $\theta \in [0, \frac{\pi}{2})$. 
Let $c_{kl} \in W^{2,\infty}(\Ri^d, \Ci)$ for all $k, l \in \{1, \ldots, d\}$.
Define $C = (c_{kl})_{1 \leq k,l \leq d}$ and $\Sigma_\theta = \{r \, e^{i \, \psi}: r \geq 0 \mbox{ and } |\psi| \leq \theta\}$.
Assume that 
\begin{equation} \label{values in sector}
(C(x) \, \xi, \xi) \in \Sigma_\theta
\end{equation}
for all $x \in \Ri^d$ and $\xi \in \Ci^d$.
Later on we will usually refer to \eqref{values in sector} as $C$ \emph{takes values in the sector $\Sigma_\theta$}.

Define the sesquilinear form
\[
\gota_0(u, v) 
= \sum_{k,l=1}^d \int_{\Ri^d} c_{kl} \, (\D_k u) \, \D_l \overline{v}
\]
on the domain $D(\gota_0) = C_c^\infty(\Ri^d)$.
Then it follows from \eqref{values in sector} that
\[
\gota_0(u, u) 
= \int_{\Ri^d} (C \, \nabla u, \nabla u)
\in \Sigma_\theta
\]
for all $u \in C_c^\infty(\Ri^d)$.
Using \cite[Theorem VI.1.27]{Kat1} we deduce that $\gota_0$ is closable.

Let $A$ be the operator associated with the closure of the form $\gota_0$.
Then $W^{2,2}(\Ri^d) \subset D(A)$ and
\[
Au = - \sum_{k,l=1}^d \D_l(c_{kl} \, \D_k u)
\]
for all $u \in W^{2,2}(\Ri^d)$.
Furthermore, by \cite[Theorem VI.2.1]{Kat1}, the operator $A$ is an $m$-sectorial operator.
Let $S$ be the $C_0$-semigroup generated by $-A$.
If $A$ is strongly elliptic, that is, if there exists a $\mu > 0$ such that 
\[
\R (C(x) \, \xi, \xi) \geq \mu \, \|\xi\|^2
\]
for all $x \in \Ri^d$ and $\xi \in \Ci^d$, then $S$ extends consistently to a $C_0$-semigroup on $L_p(\Ri^d)$ for all $p \in [1, \infty)$ by \cite[Theorem 4.8]{Aus1}.
In the general case where the coefficient matrix merely satisfies 
\[
(C(x) \, \xi, \xi) \in \Sigma_\theta
\]
for all $x \in \Ri^d$ and $\xi \in \Ci^d$, then we prove in Section \ref{S4.3} that an extension is possible for certain $p \in (1, \infty)$.
Before presenting the precise statement, we need to introduce the following notation.
We write 
\[
C = R + i \, B,
\]
where $R$ and $B$ are $d \times d$ matrix-valued functions with real-valued entries.
Let $B_a$ be the anti-symmetric part of $B$, that is, $B_a = \frac{1}{2} (B - B^T)$.
The result about semigroup extension is as follows.

\begin{prop} \label{W Lp extension}
Let $p \in (1, \infty)$. 
Suppose $|1 - \frac{2}{p}| \leq \cos \theta$ and $B_a = 0$.
Then $S$ extends consistently to a contraction $C_0$-semigroup $S^{(p)}$ on $L_p(\Ri^d)$.
\end{prop}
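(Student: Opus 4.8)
The plan is to reduce the proposition to a pointwise matrix inequality by establishing an $L_p$-dissipativity estimate at the level of the form $\gota_0$, and then to invoke the standard $L_p$-extension machinery for semigroups associated with sectorial forms. Recall that $L_2(\Ri^d) \cap L_p(\Ri^d)$ is dense in $L_p(\Ri^d)$, so a uniform bound $\|S_t u\|_{L_p} \leq \|u\|_{L_p}$ for $u$ in that subspace extends $S$ by density to a contraction semigroup $S^{(p)}$ on $L_p(\Ri^d)$ consistent with $S$; that $S^{(p)}$ is moreover strongly continuous, hence a $C_0$-semigroup, is a routine consequence once the contraction estimate is at hand. The known criterion for $L_p$-contractivity of semigroups associated with sectorial forms — in the version that only requires checking the relevant inequality on a core (here $C_c^\infty(\Ri^d)$) applied to the regularised test functions $u_\varepsilon := (|u|^2 + \varepsilon^2)^{(p-2)/2}\, u$, which again lie in $C_c^\infty(\Ri^d)$ — then reduces the whole proposition to proving
\[
\R \gota_0 \bigl( u , (|u|^2 + \varepsilon^2)^{(p-2)/2}\, u \bigr) \geq 0
\qquad\text{for all } u \in C_c^\infty(\Ri^d) \text{ and all } \varepsilon > 0 .
\]

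The core task is therefore to evaluate and sign this real part. Writing $u = \rho \, e^{i\phi}$ on $\{u \neq 0\}$, putting $g = (\rho^2 + \varepsilon^2)^{1/2}$, and expanding $\D_k u$ and $\D_l \overline{u_\varepsilon}$ in terms of $\nabla\rho$ and $\nabla\phi$, a direct computation — in which the hypothesis $B_a = 0$ is precisely what makes $(B\nabla\rho, \nabla\phi) = (B\nabla\phi, \nabla\rho)$ and thereby cancels the $B_a$-term that would otherwise obstruct positivity — leads to an identity of the shape
\[
\R \gota_0(u, u_\varepsilon) = \int_{\Ri^d} g^{p-4} \Bigl[ \bigl( (p-1)\rho^2 + \varepsilon^2 \bigr)(R\nabla\rho, \nabla\rho) + \rho^2(\rho^2 + \varepsilon^2)(R\nabla\phi, \nabla\phi) - (p-2)\rho^3 (B\nabla\rho, \nabla\phi) \Bigr] ,
\]
the antisymmetric part of $R$ dropping out of the real part. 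The $\varepsilon$-terms carry the favourable sign, so it is enough to prove the pointwise inequality
\[
(p-1)(R_s X, X) + (R_s Z, Z) \geq |p-2| \, \bigl| (B X, Z) \bigr|
\qquad\text{for all real } X, Z \in \Ri^d ,
\]
where $R_s$ is the symmetric part of $R$, and then to apply it with $X = \nabla\rho$ and $Z = \rho\,\nabla\phi$.

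For this inequality, note that \eqref{values in sector} applied to real $\xi$ gives $(R_s\xi, \xi) \geq 0$ and $|(B\xi, \xi)| \leq \tan\theta \,(R_s\xi, \xi)$, so (using $B = B_s$) both $\tan\theta\, R_s - B$ and $\tan\theta\, R_s + B$ are positive semidefinite. Writing $2B = (\tan\theta\, R_s + B) - (\tan\theta\, R_s - B)$, applying the Cauchy--Schwarz inequality for each of these two semidefinite forms and then the Cauchy--Schwarz inequality in $\Ri^2$ yields the off-diagonal bound $|(BX, Z)| \leq \tan\theta \sqrt{(R_s X, X)\,(R_s Z, Z)}$. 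Inserting this, the desired inequality becomes $x^2 + y^2 \geq c\, x y$ for all $x, y \geq 0$, with $x = \sqrt{(p-1)(R_s X, X)}$, $y = \sqrt{(R_s Z, Z)}$ and $c = |p-2|\tan\theta/\sqrt{p-1}$; this holds precisely when $c \leq 2$, i.e.\ when $|p-2|^2 \tan^2\theta \leq 4(p-1)$. A one-line computation from $p^2 - (p-2)^2 = 4(p-1)$ and $\cos^2\theta\,\tan^2\theta = 1 - \cos^2\theta$ shows that this is equivalent to the hypothesis $|1 - \frac{2}{p}| \leq \cos\theta$, so the reduction is complete.

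I expect the genuine difficulty to lie not in the matrix inequality — which is elementary, and is in fact a restatement of the Cialdea--Maz'ya $L_p$-dissipativity condition — but in the regularised form computation: arranging the expansion so that the $\varepsilon$-dependent terms are visibly of the right sign, handling the polar representation $u = \rho e^{i\phi}$ near the zero set of $u$ (for instance by first working on $\{|u| > \delta\}$ and letting $\delta \downarrow 0$, or by expressing everything through $|u_\varepsilon|$ and $\R(\overline{u}\, \nabla u)$ so as never to divide by $|u|$), and quoting the $L_p$-contractivity criterion in a form that covers degenerate sectorial forms for which only a $C_c^\infty$-core is available. The remaining density and extension steps, together with the strong continuity of $S^{(p)}$, are routine once the contraction estimate is in place.
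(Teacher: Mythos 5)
Your pointwise computation of $\R\gota_0(u,u_\varepsilon)$ and the resulting matrix inequality
\[
(p-1)(R_s X, X)+(R_s Z, Z)\ \geq\ |p-2|\,|(B_sX,Z)|
\]
are correct, and the algebraic equivalence $|p-2|\,\tan\theta\leq 2\sqrt{p-1}\iff |1-\tfrac{2}{p}|\leq\cos\theta$ is exactly what the paper uses (via Lemma~\ref{W Bs < Rs}, which packages the off-diagonal bound $|(B_sX,Z)|\leq\tan\theta\sqrt{(R_sX,X)(R_sZ,Z)}$). So the arithmetic heart of the argument matches the paper's.

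The gap is the abstract step you describe as routine. You write that there is a ``known criterion for $L_p$-contractivity of semigroups associated with sectorial forms \dots\ in the version that only requires checking the relevant inequality on a core,'' and that once $\R\gota_0(u,u_\varepsilon)\geq 0$ is established for $u\in C_c^\infty(\Ri^d)$ the rest is ``routine.'' No such criterion, applicable to a general degenerate sectorial form with only a $C_c^\infty$-core, is quoted, and I do not think one exists in the form you need. The invariance criteria in Ouhabaz's monograph (Theorem~2.2, and its $L_p$-consequences) are genuinely conditions on the \emph{form domain} $D(\overline{\gota_0})$; to reduce them to a core you must know that the relevant nonlinear truncations/projections map the core (or at least $D(\overline{\gota_0})$) into $D(\overline{\gota_0})$, and for a degenerate form this is precisely what is not available a priori. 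The Cialdea--Maz'ya pointwise dissipativity condition that your matrix inequality restates is, in the literature, turned into semigroup contractivity by passing through the strongly elliptic case or by other additional structure; it is not a free upgrade. You yourself flag this as ``where I expect the genuine difficulty to lie,'' but the proposal offers no way past it.

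The paper avoids this issue by a two-step argument that your sketch does not contain. Step~1 assumes strong ellipticity, so that $S$ already extends to a $C_0$-semigroup $S^{(p)}$ on $L_p$ by Auscher--McIntosh--Tchamitchian, the form domain is $W^{1,2}(\Ri^d)$, and the accretivity inequality $\R\int(A_pu)\,\overline{|u|^{p-2}u}\geq 0$ can be verified on the operator core $\cd=D(A)\cap D(A_p)\cap L_\infty(\Ri^d)$ (where $|u|^{p-2}u\in W^{1,2}$ by the Gilbarg--Trudinger chain rule) and then propagated to all of $D(A_p)$ by a Lumer--Phillips/accretivity-on-a-core lemma. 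Step~2 treats the degenerate case by the elliptic regularization $A_{[n]}=A-\tfrac1n\Delta$: each $S^{[n]}$ is a contraction on $L_p$ by Step~1, and the limits $S_t^{[n]}u\to S_tu$ in $L_1$ and $L_2$ (from Arendt--ter~Elst) plus interpolation give $\|S_tu\|_p\leq\|u\|_p$ for $u\in L_2\cap L_p$, after which Voigt's consistency result supplies strong continuity. If you want to keep your form-level computation, you should embed it in exactly this kind of regularize-then-pass-to-the-limit scaffold rather than appeal to an unstated abstract criterion; as written, the proposal has not actually established the contraction estimate for the degenerate operator.
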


Let $p \in (1, \infty)$ be such that $\left| 1 - \frac{2}{p} \right| \leq \cos \theta$. 
Using Proposition \ref{W Lp extension} we can now extend $S$ consistently to a $C_0$-semigroup $S^{(p)}$ on $L_p(\Ri^d)$.
Let $-A_p$ be the generator of $S^{(p)}$.
Clearly $C_c^\infty(\Ri^d) \subset D(A_p)$.
We wish to show that $C_c^\infty(\Ri^d)$ is a core for $A_p$ under certain conditions on the coefficients.
The first main result of this paper is as follows.

\begin{thrm} \label{main theorem higher dimensions}
Let $p \in (1, \infty)$ be such that $|1 - \frac{2}{p}| < \cos\theta$.
Suppose $B_a = 0$.
Then the space $C_c^\infty(\Ri^d)$ is a core for $A_p$.
\end{thrm}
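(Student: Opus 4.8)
The plan is to exhibit $C_c^\infty(\Ri^d)$ as a core by a regularisation-and-truncation argument, showing that every $u \in D(A_p)$ can be approximated in the graph norm of $A_p$ by test functions. First I would reduce to a dense subset of $D(A_p)$ on which $A_p$ acts by the differential expression. Since $S^{(p)}$ is a contraction $C_0$-semigroup and $D(A_p)$ contains $C_c^\infty(\Ri^d)$, it is natural to work with $u$ in the range of $(I+A_p)^{-1}$ applied to $C_c^\infty(\Ri^d)$, or more simply with $u \in D(A_p)$ such that $u, A_p u \in L_p \cap L_2$; by consistency of the semigroups on $L_p$ and $L_2$ and the strict inequality $|1-\tfrac2p| < \cos\theta$ (which gives a little room to interpolate with nearby exponents), such $u$ form a core. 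On this set $A_p u = -\sum_{k,l}\D_l(c_{kl}\,\D_k u)$ in the distributional sense, and $u \in W^{2,2}_{\loc}$ with controlled local estimates because $c_{kl} \in W^{2,\infty}$ and $B_a = 0$ makes the form sufficiently symmetric to run elliptic regularity.

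The core of the argument is then the standard two-step mollification: given such $u$, first multiply by a cutoff $\chi_n(x) = \chi(x/n)$ with $\chi \in C_c^\infty$, $\chi \equiv 1$ near the origin, and then mollify $\chi_n u$ with a smooth approximate identity to land in $C_c^\infty(\Ri^d)$. One must check that $A_p(\chi_n u) \to A_p u$ in $L_p$ as $n \to \infty$, and that the mollified functions converge in graph norm for fixed $n$. Expanding $A(\chi_n u)$ by the product rule produces, besides $\chi_n\,A u$, commutator terms involving $\nabla\chi_n$ and $\nabla^2\chi_n$ paired with $\D_k u$, $u$ and the coefficients together with $\D c_{kl}$, $\D^2 c_{kl}$; all the derivatives of $\chi_n$ carry factors $n^{-1}$ or $n^{-2}$ and are supported in an annulus $\{n \lesssim |x| \lesssim 2n\}$, so these terms vanish in $L_p$ provided $u, \nabla u \in L_p$ on that region, which is where the $W^{2,\infty}$ regularity of the coefficients and the local $W^{2,2}$-regularity of $u$ (combined with $L_p$ integrability) are used. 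For the mollification step one uses that convolution commutes with constant-coefficient differentiation and that the variable coefficients, being $W^{2,\infty}$, can be frozen up to errors controlled by the modulus of continuity of $\nabla^2 c_{kl}$.

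The main obstacle I expect is precisely controlling the cutoff commutator terms \emph{uniformly in $p$} within the allowed range: unlike the $L_2$ setting of \cite[Theorem 5.2]{Ouh5}, one cannot integrate by parts against $\bar u$ and use form-accretivity directly, so the estimate on $\|A_p(\chi_n u) - A_p u\|_p$ must be carried out by hand in $L_p$, using Hölder's inequality and the fact that $u$ and $\nabla u$ lie in $L_p$ (for $\nabla u$ this itself requires a Gagliardo–Nirenberg or interpolation estimate $\|\nabla u\|_p \lesssim \|u\|_p^{1/2}\|A_p u\|_p^{1/2}$-type bound, which in the degenerate case is delicate and is likely where the hypothesis $B_a = 0$ and the restriction on $p$ are genuinely needed, via Proposition \ref{W Lp extension} applied on all intermediate exponents). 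Once the commutator terms are shown to tend to zero, the mollification step is routine, and density of the resulting test functions in the graph norm follows, establishing that $C_c^\infty(\Ri^d)$ is a core for $A_p$.
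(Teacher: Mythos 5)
There is a genuine gap in your argument, and it sits exactly where you flag the difficulty. You assume that a generic $u \in D(A_p)$ (or at least a dense core of such $u$) lies in $W^{2,2}_{\loc}$ ``because $c_{kl} \in W^{2,\infty}$ and $B_a = 0$ makes the form sufficiently symmetric to run elliptic regularity,'' and later you invoke an interpolation bound of the form $\|\nabla u\|_p \lesssim \|u\|_p^{1/2}\|A_p u\|_p^{1/2}$ to control the cutoff commutators. Neither is available here: the operator is degenerate, so the coefficient matrix $R_s$ may vanish on sets of positive measure, elliptic regularity gives nothing, the form domain $D(\overline{\gota_0})$ need not be contained in $W^{1,2}(\Ri^d)$, and no such Gagliardo--Nirenberg inequality holds. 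As stated, your truncation-plus-mollification argument never gets off the ground because you cannot certify that $u$ or $\nabla u$ have the local $L_p$ integrability you need to make the commutator terms vanish.

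The paper circumvents this by refusing to work directly with $D(A_p)$. It introduces the maximal operator $B_p = (H_q)^*$ and proves two things about it. First, cutoff-plus-mollification \emph{does} work on the intersection $D(B_p) \cap W^{1,p}(\Ri^d)$, precisely because the $W^{1,p}$ regularity is assumed rather than derived; this is Proposition~\ref{W Cc dense 1} (with Lemmas~\ref{W Tn1}--\ref{W cut-off} doing the Friedrichs commutator bookkeeping). Second, and this is the crucial step you are missing, the restriction $B_p|_{D(B_p)\cap W^{1,p}}$ already has closure equal to $B_p$. That is achieved by an $m$-accretivity argument: Proposition~\ref{W 1st ineq} gives accretivity on $W^{2,p}$, Proposition~\ref{W 2nd ineq} gives a gradient-level accretivity estimate $\R(\nabla(B_p u), |\nabla u|^{p-2}\nabla u) \geq -M\|\nabla u\|_p^p$ (this is where $B_a = 0$, the strict inequality $|1-\tfrac2p|<\cos\theta$, and Oleinik-type bounds on $\D_j C$ enter), and these are fed through the regularised operators $B_{p,n} = B_p - \tfrac1n\Delta$: one solves $(\lambda + B_{p,n})u_n = f$ for $f \in W^{1,p}$, derives a uniform $W^{1,p}$ bound on $u_n$, and passes to a weak limit. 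The uniform $W^{1,p}$ bound on the \emph{resolvent solutions} plays the role that your interpolation inequality was supposed to play, but it is obtained a priori for the approximating strongly elliptic problems, not for arbitrary elements of the domain. Once $\overline{B_p|_{D(B_p)\cap W^{1,p}}}$ is shown $m$-accretive, a duality argument with $G_q = (B_p|_{C_c^\infty})^*$ upgrades this to $\overline{B_p|_{D(B_p)\cap W^{1,p}}} = B_p$, and finally $A_p = B_p$ follows since both are $m$-accretive and $A_p \subset B_p$. In short: your intuition that truncation and mollification must appear, and that a $W^{1,p}$ estimate is the crux, is correct; but the estimate is established on resolvent approximants for the \emph{maximal} operator, not on $D(A_p)$ via elliptic regularity, and the core property is then transferred to $A_p$ by identifying it with $B_p$.
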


Since $A$ is naturally defined in $L_2(\Ri^d)$ via the closure of the form $\gota_0$, the condition $B_a = 0$ is not needed to obtain a $C_0$-semigroup on $L_2(\Ri^d)$.
In this case we prove that if functions in $D(A)$ are known to possess certain smoothness properties, the space $C_c^\infty(\Ri^d)$ is always a core for $A$ regardless of $B_a$.

\begin{thrm} \label{W smoothness to core}
Suppose $D(A) \subset W^{1,2}(\Ri^d)$.
Then $C_c^\infty(\Ri^d)$ is a core for $A$.
\end{thrm}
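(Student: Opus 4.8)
The plan is to show that $C_c^\infty(\Ri^d)$ is dense in $D(A)$ for the graph norm $\|u\|_{D(A)} = \|u\|_2 + \|Au\|_2$. Since $D(A)$ is continuously embedded in $W^{1,2}(\Ri^d)$ by hypothesis (the inclusion is automatically continuous by the closed graph theorem, as $A$ is closed), it suffices to approximate a given $u \in D(A)$ in graph norm by a sequence in $C_c^\infty(\Ri^d)$. I would proceed in two stages: first a truncation step to reduce to compactly supported functions, then a mollification step to gain smoothness.

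For the truncation step, fix a cut-off $\chi \in C_c^\infty(\Ri^d)$ with $\chi = 1$ near the origin, set $\chi_n(x) = \chi(x/n)$, and consider $u_n = \chi_n u$. Clearly $u_n \to u$ in $L_2$, and $u \in W^{1,2}(\Ri^d)$ gives $\chi_n \nabla u \to \nabla u$ in $L_2$. One then computes, using the product rule in the form sense and the fact that $u \in D(A)$ with $Au = -\sum_{k,l}\D_l(c_{kl}\D_k u)$ (interpreting the right-hand side distributionally, with $c_{kl}\D_k u \in L_2$), that
\[
A u_n = \chi_n \, Au - \sum_{k,l=1}^d c_{kl}\,(\D_k u)\,(\D_l \chi_n) - \sum_{k,l=1}^d \D_l\!\big(c_{kl}\,(\D_k \chi_n)\,u\big).
\]
Here the first term converges to $Au$ in $L_2$ by dominated convergence; the second is controlled by $\|C\|_\infty \, n^{-1}\|\nabla\chi\|_\infty \|\nabla u\|_2 \to 0$; and the third, after distributing the derivative, involves $n^{-1}$ and $n^{-2}$ factors together with $\nabla u \in L_2$, $u \in L_2$ and the $W^{2,\infty}$-bounds on the $c_{kl}$, hence also tends to $0$. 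This is precisely where both hypotheses, $D(A)\subset W^{1,2}(\Ri^d)$ and $c_{kl}\in W^{2,\infty}$, are used in an essential way. Thus $u_n \to u$ in $D(A)$, and each $u_n$ has compact support and lies in $D(A)$.

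For the mollification step, fix a standard mollifier $\rho_\varepsilon$ and set $v_\varepsilon = \rho_\varepsilon * u_n$ for a fixed compactly supported $u_n \in D(A) \cap W^{1,2}(\Ri^d)$; then $v_\varepsilon \in C_c^\infty(\Ri^d)$ for $\varepsilon$ small, $v_\varepsilon \to u_n$ in $L_2$, and $\nabla v_\varepsilon \to \nabla u_n$ in $L_2$. The issue is to show $A v_\varepsilon \to A u_n$ in $L_2$. One writes $A v_\varepsilon = -\sum_{k,l}\D_l\big(c_{kl}\,\D_k(\rho_\varepsilon * u_n)\big)$ and compares it with $\rho_\varepsilon * (Au_n) = -\sum_{k,l}\D_l\big(\rho_\varepsilon * (c_{kl}\,\D_k u_n)\big)$, which converges to $Au_n$ in $L_2$. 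The difference is $\sum_{k,l}\D_l\big([\,\rho_\varepsilon*,c_{kl}\,]\,\D_k u_n\big)$, a commutator term; expanding $\D_l$ and using a Friedrichs-type commutator lemma together with the Lipschitz (indeed $W^{2,\infty}$) regularity of $c_{kl}$ shows that $[\rho_\varepsilon*, c_{kl}]\D_k u_n \to 0$ in $W^{1,2}$, hence its divergence tends to $0$ in $L_2$. Combining the two steps and a diagonal argument yields a sequence in $C_c^\infty(\Ri^d)$ converging to $u$ in the graph norm of $A$.

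The main obstacle is the mollification step, specifically controlling the commutator $\D_l[\rho_\varepsilon*, c_{kl}]\D_k u_n$ in $L_2$: a naive estimate loses a derivative on $u_n$, which is only known to be in $W^{1,2}$. The resolution is the classical Friedrichs lemma — $[\rho_\varepsilon*, c]\,\D_k$ is bounded on $L_2$ uniformly in $\varepsilon$ and converges strongly to $0$ when $c$ is Lipschitz — applied here with the extra $W^{2,\infty}$-regularity to also handle the outer derivative $\D_l$. Everything else (the truncation bounds, the product-rule manipulations) is routine once one is careful to interpret $Au$ distributionally throughout and to use the continuity of the embedding $D(A)\hookrightarrow W^{1,2}(\Ri^d)$.
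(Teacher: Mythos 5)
Your proposal is correct, but it takes a genuinely different route from the paper. The paper does not argue directly with $A$: it introduces a deformed coefficient matrix $C_\delta = (R_s + i\delta B_a) + i(B_s - i R_a)$, observes that $A \subset A_\delta + i(1-\delta)(B_a)^{\op}$, shows via Theorem~\ref{W main theorem L2}(ii) that $C_c^\infty(\Ri^d)$ is already a core for $A_\delta$, transfers the mollification convergence $A_\delta(J_n*u)\to A_\delta u$ from $A_\delta$ to $A$ by treating the first-order operator $(B_a)^{\op}$ separately (using \cite[Proposition 2.1]{ERS5}), and then invokes the backward direction of Proposition~\ref{W A mollifier}. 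You bypass all of this with a straight truncation-then-mollification argument. This works, and is morally a repackaging of machinery the paper has already built for $B_2$ in Lemma~\ref{W cut-off}, Proposition~\ref{W smoothen} and Proposition~\ref{W Cc dense 1}: once one knows $A\subset B_2$ and $D(A)\subset W^{1,2}(\Ri^d)$, one has $D(A)\subset D(B_2)\cap W^{1,2}(\Ri^d)$ with $A=B_2$ there and $A=B_2$ on $W^{2,2}(\Ri^d)\supset C_c^\infty(\Ri^d)$, so the $\|\cdot\|_{D(B_2)}$-density of $C_c^\infty(\Ri^d)$ in $D(B_2)\cap W^{1,2}(\Ri^d)$ transfers at once to a core statement for $A$. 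Your route therefore buys simplicity (no $A_\delta$, no $(B_a)^{\op}$), at the price of re-deriving the commutator estimate.

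Two steps in your write-up deserve more care than you give them. First, the identity $Au=-\sum_{k,l}\D_l(c_{kl}\D_k u)$ in the distributional sense, for $u\in D(A)$, is not immediate: one must show $(Au,\phi)=\gota(u,\phi)=(u,H_2\phi)$ for all $\phi\in C_c^\infty(\Ri^d)$, the middle equality using $L_2$-continuity of $v\mapsto\gota_0(v,\phi)$ (not merely form-norm continuity) and the last integration by parts on $W^{1,2}(\Ri^d)$. This is precisely the content of the paper's Lemma~\ref{W A subset A delta}, and your argument tacitly relies on it. Second, the claim that $[\rho_\varepsilon*,c_{kl}]\D_k u\to 0$ in $W^{1,2}$ is true but is \emph{not} the classical Friedrichs lemma applied to the outer derivative: differentiating naively produces $\int\rho_\varepsilon(y)\,[c_{kl}(x)-c_{kl}(x-y)]\,(\D_l\D_k u)(x-y)\,dy$, which hits $u$ with a second derivative you do not control. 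One must integrate by parts in $y$, after which one obtains two pieces, $\int(\D_l\rho_\varepsilon)(y)[c_{kl}(x)-c_{kl}(x-y)](\D_ku)(x-y)\,dy$ and $\rho_\varepsilon*((\D_l c_{kl})(\D_k u))$, \emph{neither of which tends to zero individually}; only their combination does, and only after the compact-support reduction. This is exactly the point of the paper's decomposition $T_n=T_n^{(1)}+T_n^{(2)}$ and the care taken in Lemma~\ref{W Tn2}. Your "resolution" paragraph correctly identifies $W^{2,\infty}$-regularity as the key hypothesis, but understates the amount of rearrangement needed to make the limit work.
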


An overview of the contents of the subsequent sections is as follows.
In Section 2 we examine the matrix of coefficients $C$ closely.
Specifically we will prove various results concerning the anti-symmetric matrix $B_a$.
In Section 3 we prove the extension of the semigroup $S$ to $L_p$-spaces.
We will analyse the operator $A_p$ in detail and then prove that $C_c^\infty(\Ri^d)$ is a core for $A_p$ in Sections 4 and 5.
In Section 6 we deal specifically with the operator $A$ in $L_2(\Ri^d)$ and present the proof of Theorem \ref{W smoothness to core}.
In Section 7 we provide some interesting examples.

\section{The coefficient matrix $C$}

Define 
\[
\R C = \frac{C + C^*}{2}
\quad
{\rm and}
\quad
\I C = \frac{C - C^*}{2i},
\]
where $C^*$ is the conjugate transpose of $C$.
Then $(\R C)(x)$ and $(\I C)(x)$ are self-adjoint for all $x \in \Ri^d$ and 
\begin{equation} \label{sa form}
C = \R C + i \, \I C.
\end{equation}
We will also decompose the coefficient matrix $C$ into
\begin{equation} \label{re form}
C = R + i \, B,
\end{equation}
where $R$ and $B$ are real matrices.
Write $R = R_s + R_a$, where $R_s = \frac{R + R^T}{2}$ is the symmetric part of $R$ and $R_a = \frac{R - R^T}{2}$ is the anti-symmetric part of $R$.
Similarly $B = B_s + B_a$, where $B_s = \frac{B + B^T}{2}$ and $B_a = \frac{B - B^T}{2}$.
A comparison between \eqref{sa form} and \eqref{re form} gives
\[
\R C = R_s + i \, B_a
\quad
{\rm and}
\quad
\I C = B_s - i \, R_a.
\]
In this section we will list various relations among $R_s$, $R_a$, $B_s$ and $B_a$ which will be used in subsequent sections.

\begin{lemm} \label{W Bs < Rs}
We have
\[
|(B_s \, \xi, \eta)| 
\leq \frac{1}{2} \, \tan \theta \, \Big( (R_s \, \xi, \xi) + (R_s \, \eta, \eta) \Big)
\]
for all $\xi, \eta \in \Ri^d$.
\end{lemm}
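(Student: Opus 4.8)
The plan is to first establish the diagonal version of the inequality, namely $|(B_s \, \xi, \xi)| \leq \tan\theta \, (R_s \, \xi, \xi)$ for every real vector $\xi$, and then to deduce the bilinear estimate by a polarisation argument.

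For the diagonal version I would fix $x \in \Ri^d$ and a real vector $\xi \in \Ri^d$. Since $R_a$ and $B_a$ are anti-symmetric, we have $(R_a \, \xi, \xi) = (B_a \, \xi, \xi) = 0$, so that $(C(x) \, \xi, \xi) = (R_s(x) \, \xi, \xi) + i \, (B_s(x) \, \xi, \xi)$, with both summands real. Applying hypothesis \eqref{values in sector} to this particular (real) $\xi \in \Ci^d$ gives $(R_s \, \xi, \xi) + i \, (B_s \, \xi, \xi) \in \Sigma_\theta$, and by the very definition of $\Sigma_\theta$ this forces $(R_s \, \xi, \xi) \geq 0$ and $|(B_s \, \xi, \xi)| \leq \tan\theta \, (R_s \, \xi, \xi)$.

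For the polarisation step, let $\xi, \eta \in \Ri^d$. Using that $B_s$ and $R_s$ are real symmetric matrices, I would expand $(B_s(\xi \pm \eta), \xi \pm \eta)$ to obtain $4 \, (B_s \, \xi, \eta) = (B_s(\xi + \eta), \xi + \eta) - (B_s(\xi - \eta), \xi - \eta)$, and similarly $(R_s(\xi + \eta), \xi + \eta) + (R_s(\xi - \eta), \xi - \eta) = 2 \, (R_s \, \xi, \xi) + 2 \, (R_s \, \eta, \eta)$. Then the triangle inequality together with the diagonal estimate applied to $\xi + \eta$ and to $\xi - \eta$ yields
\[
4 \, |(B_s \, \xi, \eta)|
\leq \tan\theta \, \Big( (R_s(\xi + \eta), \xi + \eta) + (R_s(\xi - \eta), \xi - \eta) \Big)
= 2 \tan\theta \, \Big( (R_s \, \xi, \xi) + (R_s \, \eta, \eta) \Big),
\]
which is the asserted inequality after division by $4$.

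I do not expect a genuine obstacle here: this is essentially a warm-up reduction of the sectoriality hypothesis to a quadratic-form comparison. The only points that require a little care are that \eqref{values in sector} is being invoked for real test vectors, and that the anti-symmetric parts $R_a$ and $B_a$ drop out on the diagonal, so that the sector condition translates cleanly into the comparison between the quadratic forms of $B_s$ and $R_s$.
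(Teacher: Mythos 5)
Your proposal is correct and follows essentially the same route as the paper: restrict to real vectors so that the anti-symmetric parts drop out on the diagonal, read off $|(B_s\xi,\xi)| \le \tan\theta\,(R_s\xi,\xi)$ from the sector condition, and then polarise. The only cosmetic difference is that the paper passes through the Cauchy--Schwarz-type bound $|(B_s\xi,\eta)| \le \tan\theta\,(R_s\xi,\xi)^{1/2}(R_s\eta,\eta)^{1/2}$ before applying AM--GM, while you use the parallelogram polarisation identity directly; both give the same conclusion.
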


\begin{proof}
Since $C$ takes values in $\Sigma_\theta$, we have
\[
\big|\big( (\I C(x)) \, \xi, \xi \big) \big| \leq \tan \theta \, \big( (\R C(x)) \, \xi, \xi \big)
\]
for all $x \in \Ri^d$ and $\xi \in \Ci^d$.
It follows that
\[
|(B_s \, \xi, \xi)| \leq \tan \theta \, (R_s \, \xi, \xi)
\]
for all $\xi \in \Ri^d$.
We next use polarisation to obtain
\[
|(B_s \, \xi, \eta)| 
\leq \tan \theta \, (R_s \, \xi, \xi)^{1/2} \, (R_s \, \eta, \eta)^{1/2}
\leq \frac{1}{2} \, \tan \theta \, \Big( (R_s \, \xi, \xi) + (R_s \, \eta, \eta) \Big)
\]
for all $\xi, \eta \in \Ri^d$ as required.
\end{proof}

\begin{lemm} \label{W f}
Let $j \in \{1, \ldots, d\}$.
Let $f \in W^{2,\infty}(\Ri^d)$ be such that $f(x) \geq 0$ for all $x \in \Ri^d$.
Then 
\[
|\D_j f|^2 \leq 2 \, \|\D_j^2 f\|_\infty \, f.
\]
\end{lemm}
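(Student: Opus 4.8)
The plan is to reduce the inequality to a one-dimensional statement about a nonnegative function with bounded second derivative, and then exploit the nonnegativity through a Taylor estimate.

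First I would fix $x \in \Ri^d$ and restrict $f$ to the line through $x$ in the $e_j$-direction: set $g : \Ri \to \Ri$, $g(t) = f(x + t \, e_j)$. Since $f \in W^{2,\infty}(\Ri^d)$ we may, after modifying $f$ on a set of measure zero, assume $f$ is continuously differentiable with Lipschitz continuous first derivatives; then $g \in C^1(\Ri)$, $g'(t) = (\D_j f)(x + t \, e_j)$, and $g'$ is Lipschitz with constant $M := \|\D_j^2 f\|_\infty$. (Alternatively, to sidestep the choice of representative, one may first prove the claim with $f$ replaced by a mollification $f_\varepsilon = f * \rho_\varepsilon \geq 0$, which is smooth and satisfies $\|\D_j^2 f_\varepsilon\|_\infty \leq M$, and then let $\varepsilon \downarrow 0$.) Note that $g(t) \geq 0$ for all $t$ because $f \geq 0$.

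Next I would use that $g'$ is $M$-Lipschitz to obtain, for every $t \in \Ri$,
\[
g(t) = g(0) + \int_0^t g'(s) \, ds \leq g(0) + g'(0) \, t + \tfrac{1}{2} \, M \, t^2 ,
\]
by estimating $g'(s)$ against $g'(0) + M \, |s|$ on the interval between $0$ and $t$. Combined with $g(t) \geq 0$, this shows that the quadratic polynomial $t \mapsto \tfrac{1}{2} M t^2 + g'(0) \, t + g(0)$ is nonnegative on all of $\Ri$.

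Finally I would conclude by the discriminant criterion. If $M > 0$, nonnegativity of this quadratic forces $g'(0)^2 - 2 \, M \, g(0) \leq 0$, which is exactly $|\D_j f(x)|^2 \leq 2 \, \|\D_j^2 f\|_\infty \, f(x)$. If $M = 0$, then $g$ is affine and nonnegative on $\Ri$, so $g'(0) = 0$ and the inequality holds trivially. Since $x \in \Ri^d$ was arbitrary, the lemma follows. The only delicate point is the limited regularity: a $W^{2,\infty}$ function need not be $C^2$, so Taylor's formula with a pointwise second-order remainder is unavailable; this is handled either by passing to the $C^{1,1}$ representative or by the mollification argument indicated above, and otherwise the proof is elementary.
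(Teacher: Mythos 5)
Your proof is correct and takes essentially the same approach as the paper: both pass to a smooth (or $C^{1,1}$) approximation of $f$, derive the one-dimensional Taylor-type bound $0 \leq f(x) + h\,(\D_j f)(x) + \tfrac{h^2}{2}\,\|\D_j^2 f\|_\infty$, and conclude by the discriminant criterion for a nonnegative quadratic. The paper uses the mollification route you mention as an alternative, letting $n \to \infty$ in the inequality for $f_n = J_n * f$.
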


\begin{proof}
Let $j \in \{1, \ldots, d\}$, $x \in \Ri^d$ and $h \in \Ri$.
For each $n \in \Ni$ let $f_n = J_n * f$, where $J_n$ denotes the usual mollifier with respect to a suitable function in $C_c^\infty(\Ri^d)$.
Then $f_n \geq 0$ and $f_n \in C^\infty(\Ri^d)$ for all $n \in \Ni$.
Using the Taylor expansion we have
\[
0 \leq f_n(x) + h \, (\D_j f_n)(x) + \frac{h^2}{2} \, \|\D_j^2 f_n\|_\infty
\]
for all $n \in \Ni$.
Letting $n \longrightarrow \infty$ we obtain
\[
0 \leq f(x) + h \, (\D_jf)(x) + \frac{h^2}{2} \, \|\D_j^2 f\|_\infty.
\]
This is true for all $h \in \Ri$.
Hence $|\D_j f(x)|^2 \leq 2 \, \|\D_j^2 f\|_\infty \, f(x)$ as required.
\end{proof}

\begin{lemm} \label{W f sectorial}
Let $j \in \{1, \ldots, d\}$.
Let $f \in W^{2, \infty}(\Ri^d)$ be such that $f(x) \in \Sigma_\theta$ for all $x \in \Ri^d$.
Then 
\[
|\D_j f|^2 \leq 4 \, (1 + \tan\theta)^2 \, \sup_{1 \leq j \leq d} \|\D_j^2 f\|_\infty \, \R f.
\]
\end{lemm}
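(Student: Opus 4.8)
The plan is to split $f$ into its real and imaginary parts and to feed suitable nonnegative combinations of them into Lemma~\ref{W f}. Write $f = \phi + i\,\psi$ with $\phi = \R f$ and $\psi = \I f$. Both $\phi$ and $\psi$ belong to $W^{2,\infty}(\Ri^d)$, are real-valued, and satisfy $\|\D_j^2 \phi\|_\infty \leq \|\D_j^2 f\|_\infty$ and $\|\D_j^2 \psi\|_\infty \leq \|\D_j^2 f\|_\infty$ for every $j$. The assumption $f(x) \in \Sigma_\theta$ gives, by the same elementary observation used at the start of the proof of Lemma~\ref{W Bs < Rs}, the two pointwise estimates
\[
\phi \geq 0 \qquad\text{and}\qquad |\psi| \leq \tan\theta \, \phi .
\]
In particular $\phi$ is a nonnegative $W^{2,\infty}$-function, so Lemma~\ref{W f} applies directly and yields $|\D_j \phi|^2 \leq 2 \, \|\D_j^2 \phi\|_\infty \, \phi \leq 2 \, \|\D_j^2 f\|_\infty \, \R f$.

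The function $\psi$ is sign-indefinite, so Lemma~\ref{W f} cannot be applied to it as it stands; handling this is the only genuine difficulty, and the fix is a small symmetrisation. I would introduce
\[
u_\pm = \tan\theta \, \phi \pm \psi ,
\]
which lie in $W^{2,\infty}(\Ri^d)$, are nonnegative because $|\psi| \leq \tan\theta \, \phi$, and obey $u_\pm \leq 2 \tan\theta \, \phi$ together with $\|\D_j^2 u_\pm\|_\infty \leq (1 + \tan\theta) \, \|\D_j^2 f\|_\infty$. Applying Lemma~\ref{W f} to each of $u_+$ and $u_-$ then gives $|\D_j u_\pm|^2 \leq 4 \tan\theta \, (1 + \tan\theta) \, \|\D_j^2 f\|_\infty \, \R f$.

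Finally, since $2\psi = u_+ - u_-$, the triangle inequality and the bound $(a+b)^2 \leq 2(a^2 + b^2)$ give $|\D_j \psi|^2 \leq 4 \tan\theta \, (1 + \tan\theta) \, \|\D_j^2 f\|_\infty \, \R f$. Adding this to the estimate for $|\D_j \phi|^2$ and using $|\D_j f|^2 = |\D_j \phi|^2 + |\D_j \psi|^2$ yields
\[
|\D_j f|^2 \leq \big( 2 + 4 \tan\theta + 4 \tan^2\theta \big) \, \|\D_j^2 f\|_\infty \, \R f ,
\]
and the claim follows once one notes $2 + 4\tan\theta + 4\tan^2\theta \leq 4 (1 + \tan\theta)^2$ and bounds $\|\D_j^2 f\|_\infty$ by $\sup_{1 \leq j \leq d} \|\D_j^2 f\|_\infty$. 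The borderline case $\theta = 0$ needs no separate argument, since then $\psi \equiv 0$ and the bound for $\phi$ alone already suffices. Thus the real obstacle is the absence of a sign on $\I f$, which the combinations $\tan\theta \, \R f \pm \I f$ circumvent; the rest is routine tracking of constants.
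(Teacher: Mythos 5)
Your proof is correct and follows essentially the same route as the paper: both argue that $\R f \geq 0$ and that $\tan\theta \, \R f \pm \I f \geq 0$, and then apply Lemma~\ref{W f} to each of these three nonnegative $W^{2,\infty}$-functions. The only cosmetic difference is that the paper adds the two inequalities for $\D_j(\tan\theta\,\R f \pm \I f)$ and invokes the parallelogram identity to isolate $|\D_j(\I f)|^2$, whereas you first bound each right-hand side by $2\tan\theta\,\R f$ and then recover $\D_j(\I f)$ via $2\,\I f = u_+ - u_-$ and $(a+b)^2 \leq 2(a^2+b^2)$; either bookkeeping lands within the stated constant $4(1+\tan\theta)^2$.
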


\begin{proof}
Since $f(x) \in \Sigma_\theta$ for all $x \in \Ri^d$, we have $\R f \geq 0$.
Therefore by Lemma \ref{W f} we have
\[
|\D_j (\R f)|^2 
\leq 2 \, \|\D_j^2 (\R f)\|_\infty \, \R f
\leq 2 \, \sup_{1 \leq j \leq d} \|\D_j^2 f\|_\infty \, \R f.
\]
Also $|\I f| \leq (\tan\theta) \, \R f$.
That is, $(\tan\theta) \, \R f \pm \I f \geq 0$.
Applying Lemma \ref{W f} again we obtain
\begin{eqnarray*}
|\D_j ( (\tan\theta) \, \R f + \I f)|^2 
& \leq & 2 \, \|\D_j^2 ((\tan\theta) \, \R f + \I f)\|_\infty \, ((\tan\theta) \, \R f + \I f)
\\
& \leq & 2 \, (1 + \tan\theta) \, \sup_{1 \leq j \leq d} \|\D_j^2 f\|_\infty \, ((\tan\theta) \, \R f + \I f)
\end{eqnarray*}
and
\begin{eqnarray*}
|\D_j ( (\tan\theta) \, \R f - \I f)|^2 
& \leq & 2 \, \|\D_j^2 ((\tan\theta) \, \R f - \I f)\|_\infty \, ((\tan\theta) \, \R f - \I f)
\\
& \leq & 2 \, (1 + \tan\theta) \, \sup_{1 \leq j \leq d} \|\D_j^2 f\|_\infty \, ((\tan\theta) \, \R f - \I f).
\end{eqnarray*}
Adding the two inequalities gives
\[
(\tan\theta)^2 \, |\D_j (\R f)|^2 + |\D_j (\I f)|^2 
\leq 2 \, (1 + \tan\theta)^2 \, \sup_{1 \leq j \leq d} \|\D_j^2 f\|_\infty \, \R f.
\]
Hence
\[
|\D_j f|^2 = |\D_j (\R f)|^2 + |\D_j (\I f)|^2
\leq 4 \, (1 + \tan\theta)^2 \, \sup_{1 \leq j \leq d} \|\D_j^2 f\|_\infty \, \R f
\]
as required.
\end{proof}

\begin{lemm} \label{W preOleinik}
Let $j \in \{1, \ldots, d\}$.
Let $\xi, \eta \in \Ci^d$.
Then the following are valid.
\begin{tabel}
\item $|((\D_j C) \, \xi, \eta)|^2 \leq M \, \Big( ((\R C) \, \xi, \xi) + ((\R C) \, \eta, \eta) \Big)$, where 
\[
M = 8 \, (1 + \tan\theta)^2 \, (\|\xi\|^2 + \|\eta\|^2)
	\, \sup_{1 \leq l \leq d} \|\D_l^2 C\|_\infty.
\]
\item $|( (\D_j \I C) \, \xi, \eta )|^2 \leq M \, \Big( ((\R C) \, \xi, \xi) + ( (\R C) \, \eta, \eta) \Big)$, where 
\[
M = 8 \, (1 + \tan\theta)^2 \, (\|\xi\|^2 + \|\eta\|^2)
	\, \sup_{1 \leq l \leq d} \|\D_l^2 C\|_\infty.
\]
\end{tabel}
\end{lemm}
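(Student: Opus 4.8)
The plan is to reduce both parts to the scalar inequality of Lemma~\ref{W f sectorial} by freezing the vectors and viewing the quadratic form attached to $C$ as a scalar $W^{2,\infty}$-function. First fix $\xi \in \Ci^d$ and set $f(x) = (C(x)\,\xi,\xi)$. Since every $c_{kl} \in W^{2,\infty}(\Ri^d)$ we have $f \in W^{2,\infty}(\Ri^d)$, and $f(x) \in \Sigma_\theta$ for all $x$ by \eqref{values in sector}. Moreover $\D_j f = ((\D_j C)\,\xi,\xi)$, $\R f = ((\R C)\,\xi,\xi)$ and $\|\D_l^2 f\|_\infty \le \|\D_l^2 C\|_\infty\,\|\xi\|^2$ for each $l$, so Lemma~\ref{W f sectorial} gives the diagonal bound
\[
|((\D_j C)\,\xi,\xi)|^2 \le 4\,(1+\tan\theta)^2\,\|\xi\|^2\,\sup_{1\le l\le d}\|\D_l^2 C\|_\infty\,((\R C)\,\xi,\xi)
\]
for all $\xi \in \Ci^d$. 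Note \eqref{values in sector} also forces $(\R C)(x) \ge 0$, so $\zeta \mapsto ((\R C)\,\zeta,\zeta)^{1/2}$ is a seminorm.

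To obtain part~(a) I would polarise: writing $\gotb(\xi,\eta) := ((\D_j C)\,\xi,\eta)$ one has $\gotb(\xi,\eta) = \tfrac14\sum_{k=0}^{3} i^{k}\,\gotb(\xi + i^{k}\eta,\, \xi + i^{k}\eta)$; applying the diagonal bound to each vector $\zeta = \xi + i^k\eta$ and using $\|\zeta\|^2 \le 2(\|\xi\|^2+\|\eta\|^2)$ together with $((\R C)\,\zeta,\zeta) \le 2\big(((\R C)\,\xi,\xi)+((\R C)\,\eta,\eta)\big)$, then summing the four terms and squaring, yields (a). The stated constant $M = 8\,(1+\tan\theta)^2(\|\xi\|^2+\|\eta\|^2)\,\sup_l\|\D_l^2 C\|_\infty$ comfortably absorbs the numerical losses of this step.

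Part~(b) follows from (a) with essentially no extra work, because differentiation commutes with the adjoint: $\D_j(\I C) = \tfrac{1}{2i}\big(\D_j C - (\D_j C)^*\big)$, hence
\[
((\D_j \I C)\,\xi,\eta) = \frac{1}{2i}\Big(((\D_j C)\,\xi,\eta) - \overline{((\D_j C)\,\eta,\xi)}\Big)
\]
and $|((\D_j \I C)\,\xi,\eta)| \le \tfrac12\big(|((\D_j C)\,\xi,\eta)| + |((\D_j C)\,\eta,\xi)|\big)$. Since the bound in (a) is symmetric in $\xi$ and $\eta$, both terms on the right are at most $M^{1/2}\big(((\R C)\,\xi,\xi)+((\R C)\,\eta,\eta)\big)^{1/2}$, and squaring gives (b) with the same $M$.

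The only mildly delicate point is the polarisation step for~(a): the diagonal bound carries an extra factor $\|\xi\|$ and $\D_j C$ need not be self-adjoint, so the Cauchy--Schwarz-type polarisation used for $B_s$ in Lemma~\ref{W Bs < Rs} does not suffice; one must use the full four-term complex polarisation identity and keep a little care with the constants. Everything else is immediate from Lemma~\ref{W f sectorial}.
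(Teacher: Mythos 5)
Your approach is genuinely different from the paper's.  The paper sets $Y=(C\,\xi,\eta)=Y_1+iY_2$, establishes the pointwise bound $|Y|\le X$ with $X=(1+\tan\theta)\big(((\R C)\xi,\xi)+((\R C)\eta,\eta)\big)$, and then applies the scalar Lemma~\ref{W f} to the four nonnegative functions $X\pm Y_1$, $X\pm Y_2$; it proves~(b) by repeating this argument verbatim for~$\I C$.  You instead first obtain the diagonal bound from Lemma~\ref{W f sectorial}, then use the four-term complex polarisation identity, and finally derive~(b) from~(a) via $\D_j\I C=\tfrac{1}{2i}\big(\D_j C-(\D_j C)^*\big)$, which is a neat economy.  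The reduction of~(b) to~(a) is correct since the bound in~(a) is symmetric in~$\xi,\eta$.

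However, there is a genuine gap in your constant-tracking, and the sentence claiming that $M=8\,(1+\tan\theta)^2(\|\xi\|^2+\|\eta\|^2)\,\sup_l\|\D_l^2C\|_\infty$ ``comfortably absorbs the numerical losses'' is false.  Following your stated estimates: the diagonal bound gives $|((\D_j C)\zeta,\zeta)|\le 2(1+\tan\theta)K^{1/2}\|\zeta\|\,((\R C)\zeta,\zeta)^{1/2}$ with $K=\sup_l\|\D_l^2C\|_\infty$; applying the crude per-term inequalities $\|\zeta_k\|^2\le 2(\|\xi\|^2+\|\eta\|^2)$ and $((\R C)\zeta_k,\zeta_k)\le 2(((\R C)\xi,\xi)+((\R C)\eta,\eta))$ to each $\zeta_k=\xi+i^k\eta$ and averaging the four terms yields $|((\D_j C)\xi,\eta)|\le 4(1+\tan\theta)K^{1/2}(\|\xi\|^2+\|\eta\|^2)^{1/2}(((\R C)\xi,\xi)+((\R C)\eta,\eta))^{1/2}$, whose square has constant $16$, not $8$.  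The fix is cheap but necessary: instead of bounding each summand separately, apply Cauchy--Schwarz across the four polarisation terms, using the parallelogram identities $\sum_{k=0}^3\|\zeta_k\|^2=4(\|\xi\|^2+\|\eta\|^2)$ and $\sum_{k=0}^3((\R C)\zeta_k,\zeta_k)=4\big(((\R C)\xi,\xi)+((\R C)\eta,\eta)\big)$ (the latter valid because $\R C$ is Hermitian).  This gives
\[
|((\D_j C)\xi,\eta)|\le\frac{(1+\tan\theta)K^{1/2}}{2}\Big(\sum_k\|\zeta_k\|^2\Big)^{1/2}\Big(\sum_k((\R C)\zeta_k,\zeta_k)\Big)^{1/2}
=2(1+\tan\theta)K^{1/2}(\|\xi\|^2+\|\eta\|^2)^{1/2}\big(\cdots\big)^{1/2},
\]
whose square has constant $4$, which is in fact sharper than the paper's~$8$.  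Without this refinement your argument only proves the statement with $M$ replaced by $2M$.
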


\begin{proof}
We will prove Statement (a).
The proof for Statement (b) is similar.

Since $C$ takes values in $\Sigma_\theta$, we have
\[
|(C \, \xi, \xi)| \leq (1 + \tan\theta) \, ((\R C) \, \xi, \xi).
\]
Polarisation gives
\begin{eqnarray*}
|(C \, \xi, \eta)| 
& \leq & 2 \, (1 + \tan\theta) \, ((\R C) \, \xi, \xi)^{1/2} \, 
	((\R C) \, \eta, \eta)^{1/2}
\\
& \leq & (1 + \tan\theta) \, \Big( ((\R C) \, \xi, \xi) 
	+ ((\R C) \, \eta, \eta) \Big).
\end{eqnarray*}
Let 
\[
X = (1 + \tan\theta) \, \Big( ((\R C) \, \xi, \xi) 
	+ ((\R C) \, \eta, \eta) \Big)
\]
and
\[
Y = (C \, \xi, \eta) = Y_1 + i \, Y_2,
\]
where $Y_1$ and $Y_2$ are real-valued functions.
Since $X - Y_1 \geq 0$, it follows from Lemma \ref{W f} that
\[
|\D_j (X - Y_1)|^2
\leq 2 \, \|\D_j^2(X - Y_1)\|_\infty \, (X - Y_1)
\leq 2 \, (\|\D_j^2 X\|_\infty + \|\D_j^2 Y\|_\infty) \, (X - Y_1).
\]
Arguing similarly for $X + Y_1 \geq 0$ we yield
\[
|\D_j (X + Y_1)|^2
\leq 2 \, (\|\D_j^2 X\|_\infty + \|\D_j^2 Y\|_\infty) \, (X + Y_1).
\]
By adding the two inequalities we obtain
\[
|\D_j X|^2 + |\D_j Y_1|^2 
\leq 2 \, (\|\D_j^2 X\|_\infty + \|\D_j^2 Y\|_\infty) \, X.
\]
Analogously 
\[
|\D_j X|^2 + |\D_j Y_2|^2 
\leq 2 \, (\|\D_j^2 X\|_\infty + \|\D_j^2 Y\|_\infty) \, X.
\]
Hence 
\begin{eqnarray*}
|((\D_j C) \, \xi, \eta)|^2
& = & |\D_j Y_1|^2 + |\D_j Y_2|^2
	\leq 4 \, (\|\D_j^2 X\|_\infty + \|\D_j^2 Y\|_\infty) \, X
\\
& \leq & M \, 
	\Big( ((\R C) \, \xi, \xi) + ((\R C) \, \eta, \eta) \Big),
\end{eqnarray*}
where 
\[
M = 8 \, (1 + \tan\theta)^2 \, (\|\xi\|^2 + \|\eta\|^2)
	\, \sup_{1 \leq l \leq d} \|\D_l^2 C\|_\infty.
\]
The proof is complete.
\end{proof}

Next we provide a complex version of Oleinik's inequality (cf.\ \cite{Ole}).
\begin{prop} \label{W Oleinik}
Let $j \in \{1, \ldots, d\}$.
Let $U$ be a complex $d \times d$ matrix.
Then the following are valid.
\begin{tabel}
\item $|\tr ((\D_j C) \, U)|^2 \leq M \, \Big( \tr(U^* \, (\R C) \, U) + \tr(U \, (\R C) \, U^*) \Big)$, where 
\[
M = 16 \, d \, (1 + \tan\theta)^2 \, \sup_{1 \leq l \leq d} \|\D_l^2 C\|_\infty.
\]
\item $|\tr ((\D_j \I C) \, U)|^2 \leq M \, \Big( \tr(U^* \, (\R C) \, U) + \tr(U \, (\R C) \, U^*) \Big)$, where 
\[
M = 16 \, d \, (1 + \tan\theta)^2 \, \sup_{1 \leq l \leq d} \|\D_l^2 C\|_\infty.
\]
\end{tabel}
\end{prop}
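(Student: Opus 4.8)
The plan is to reduce this matrix--trace estimate to the vectorial estimate of Lemma~\ref{W preOleinik} by means of a singular value decomposition of $U$. I shall describe the argument for Statement~(a); Statement~(b) is obtained in exactly the same way, with $\I C$ in place of $C$ and Lemma~\ref{W preOleinik}(b) in place of Lemma~\ref{W preOleinik}(a).

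First I would write $U = V \, \Sigma \, W^*$, where $V$ and $W$ are unitary $d \times d$ matrices and $\Sigma = \mathrm{diag}(\sigma_1, \ldots, \sigma_d)$ with $\sigma_1, \ldots, \sigma_d \geq 0$, and let $v_1, \ldots, v_d$ and $w_1, \ldots, w_d$ be the columns of $V$ and $W$ respectively, so that they form orthonormal bases of $\Ci^d$. By the cyclic invariance of the trace,
\[
\tr\big( (\D_j C) \, U \big)
= \tr\big( W^* \, (\D_j C) \, V \, \Sigma \big)
= \sum_{s=1}^d \sigma_s \, \big( (\D_j C) \, v_s, w_s \big),
\]
and then the Cauchy--Schwarz inequality, applied to each summand in the form $1 \cdot \sigma_s \, |((\D_j C) \, v_s, w_s)|$, gives
\[
\big| \tr\big( (\D_j C) \, U \big) \big|^2
\leq d \, \sum_{s=1}^d \sigma_s^2 \, \big| \big( (\D_j C) \, v_s, w_s \big) \big|^2 .
\]
This is where the factor $d$ in the constant $M$ enters.

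Next I would invoke Lemma~\ref{W preOleinik}(a) for each pair $(v_s, w_s)$; since $\|v_s\| = \|w_s\| = 1$, the constant in that lemma reduces to $16 \, (1 + \tan\theta)^2 \, \sup_{1 \leq l \leq d} \|\D_l^2 C\|_\infty$. Summing the resulting estimates with the weights $\sigma_s^2$ yields
\[
\big| \tr\big( (\D_j C) \, U \big) \big|^2
\leq M \, \sum_{s=1}^d \sigma_s^2 \, \Big( \big( (\R C) \, v_s, v_s \big) + \big( (\R C) \, w_s, w_s \big) \Big)
\]
with $M = 16 \, d \, (1 + \tan\theta)^2 \, \sup_{1 \leq l \leq d} \|\D_l^2 C\|_\infty$. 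It then remains to note, using $\sum_{s=1}^d \sigma_s^2 \, v_s v_s^* = V \, \Sigma^2 \, V^* = U U^*$ and $\sum_{s=1}^d \sigma_s^2 \, w_s w_s^* = W \, \Sigma^2 \, W^* = U^* U$ together with the cyclic invariance of the trace, that the right-hand side equals $M \, \big( \tr( U^* \, (\R C) \, U ) + \tr( U \, (\R C) \, U^* ) \big)$, which is the asserted bound.

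The single point that needs care --- and the reason the singular value decomposition, rather than, say, an expansion of $U$ in the standard basis, is the natural device here --- is that the $\|\xi\|^2 + \|\eta\|^2$ factor in the constant of Lemma~\ref{W preOleinik} must not reintroduce an extra power of $U$ into the estimate. Passing to an orthonormal system makes this factor equal to the harmless constant $2$, while the squares $\sigma_s^2$ of the singular values recombine exactly into the two trace expressions on the right-hand side, so that the homogeneity of degree two in $U$ is preserved. All remaining manipulations are pointwise in $x \in \Ri^d$ and use only elementary identities for the trace.
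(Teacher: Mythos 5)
Your proof is correct and is essentially the same as the paper's: the paper uses a polar decomposition $U = V\,|U|$ followed by a unitary diagonalisation of $|U|$, which amounts precisely to the singular value decomposition you invoke directly, and both arguments then apply Lemma~\ref{W preOleinik} to the orthonormal singular vectors, pick up the factor $d$ from Cauchy--Schwarz on the diagonal sum, and recombine the $\sigma_s^2$ into the two trace expressions.
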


\begin{proof}
We will prove Statement (a).
The proof for Statement (b) is similar.

Let $j \in \{1, \ldots, d\}$ and 
\[
M = 16 \, d \, (1 + \tan\theta)^2 \, \sup_{1 \leq l \leq d} \|\D_l^2 C\|_\infty.
\]
Let $V$ be a unitary matrix such that $U = V \, |U|$, where $|U| = \sqrt{U^* \, U}$.
Since $|U|$ is positive and Hermitian, there exists a unitary matrix $W$ such that $|U| = W \, D \, W^*$, where $D$ is a positive diagonal matrix.
It follows that
\begin{eqnarray*}
|\tr((\D_j C) \, U)|^2
& = & |\tr((\D_j C) \, V \, |U|)|^2
	= |\tr(W^* \, (\D_j C) \, V \, W \, W^* \, |U| \, W)|^2
\\
& = & |\tr(W^* \, (\D_j C) \, V \, W \, D)|^2
	= \Big| \sum_{k=1}^d (W^* \, (\D_j C) \, V \, W)_{kk} \, D_{kk} \Big|^2
\\
& \leq & d \, \sum_{k=1}^d |(W^* \, (\D_j C) \, V \, W)_{kk} |^2 \, |D_{kk}|^2
\\
& \leq & M \, \sum_{k=1}^d \Big( (W^* \, (\R C) \, W)_{kk} 
	+ (W^* \, V^* \, (\R C) \, V \, W)_{kk} \Big) \, |D_{kk}|^2
\\
& \leq & M \, \sum_{k=1}^d \Big( D_{kk} \, (W^* \, (\R C) \, W)_{kk} \, D_{kk}
	+ D_{kk} \, (W^* \, V^* \, (\R C) \, V \, W)_{kk} \, D_{kk} \Big)
\\
& \leq & M \, \Big( \tr(|U| \, (\R C) \, |U|) + \tr(|U| \, V^* \, (\R C) \, V \, |U|) \Big)
\\
& = & M \, \Big( \tr(U \, (\R C) \, U^*) + \tr(U^* \, (\R C) \, U) \Big),
\end{eqnarray*}
where we used Lemma \ref{W preOleinik} in the second inequality.
\end{proof}

\begin{coro} \label{W Oleinik 2}
Let $j \in \{1, \ldots, d\}$.
Suppose $U$ is a complex $d \times d$ matrix with $U^T = U$.
Then the following are valid.
\begin{tabel}
\item $|\tr ((\D_j C) \, U)|^2 \leq M \, \tr(U \, R_s \, \overline{U})$, where 
\[
M = 32 \, d \, (1 + \tan\theta)^2 \, \sup_{1 \leq l \leq d} \|\D_l^2 C\|_\infty.
\]
\item $|\tr ((\D_j \I C) \, U)|^2 \leq M \, \tr(U \, R_s \, \overline{U})$, where 
\[
M = 32 \, d \, (1 + \tan\theta)^2 \, \sup_{1 \leq l \leq d} \|\D_l^2 C\|_\infty.
\]
\end{tabel}
\end{coro}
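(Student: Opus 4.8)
The plan is to derive Corollary~\ref{W Oleinik 2} directly from Proposition~\ref{W Oleinik}, using the symmetry hypothesis $U^T = U$ to collapse the right-hand side of that proposition into a single trace involving $R_s$. First I would record the elementary observation that $U^T = U$ forces $U^* = \overline{U^T} = \overline{U}$, so that the two trace terms occurring in Proposition~\ref{W Oleinik}(a), namely $\tr(U^* \, (\R C) \, U)$ and $\tr(U \, (\R C) \, U^*)$, become $\tr(\overline{U} \, (\R C) \, U)$ and $\tr(U \, (\R C) \, \overline{U})$.

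Next I would substitute the decomposition $\R C = R_s + i \, B_a$ from the beginning of this section and split each of these two traces into its $R_s$-part and its $B_a$-part. The key computational step is that the two $B_a$-contributions cancel while the two $R_s$-contributions coincide. Both facts follow from the identity $\tr M = \tr M^T$: since $R_s^T = R_s$, $B_a^T = -B_a$, $U^T = U$ and $\overline{U}^T = \overline{U}$, transposing gives $\tr(\overline{U} \, R_s \, U) = \tr(U \, R_s \, \overline{U})$ and $\tr(\overline{U} \, B_a \, U) = -\tr(U \, B_a \, \overline{U})$. Adding the two trace terms from Proposition~\ref{W Oleinik}(a) therefore yields
\[
\tr(U^* \, (\R C) \, U) + \tr(U \, (\R C) \, U^*) = 2 \, \tr(U \, R_s \, \overline{U}),
\]
and Statement~(a) follows with $M = 2 \cdot 16 \, d \, (1 + \tan\theta)^2 \, \sup_{1 \le l \le d} \|\D_l^2 C\|_\infty = 32 \, d \, (1 + \tan\theta)^2 \, \sup_{1 \le l \le d} \|\D_l^2 C\|_\infty$. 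Statement~(b) comes out in exactly the same way from Proposition~\ref{W Oleinik}(b), because the bound there has the identical right-hand side.

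I do not anticipate a genuine obstacle here: the whole argument is a short sequence of trace manipulations once the symmetry of $U$ is invoked. The only point that needs care is the bookkeeping between conjugation and transposition --- because $C$, and hence $\R C$, is genuinely complex, one must not conflate $U^*$ with $U^T$ except through the identity $U^* = \overline{U}$. It is also worth noting, as a consistency check, that the right-hand side $\tr(U \, R_s \, \overline{U})$ is nonnegative: one has $R_s \ge 0$ because $C$ takes values in $\Sigma_\theta$ (evaluating $(C \, \xi, \xi)$ on real vectors $\xi$ removes the anti-symmetric part of $R$, leaving $\R (C \, \xi, \xi) = (R_s \, \xi, \xi) \ge 0$), so writing $R_s = S^2$ with $S$ a real symmetric matrix gives $\tr(U \, R_s \, \overline{U}) = \tr\big( (U \, S)^* \, (U \, S) \big) \ge 0$.
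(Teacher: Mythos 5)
Your argument is correct and is essentially the paper's own proof: both rewrite $U^*$ as $\overline{U}$ using $U^T = U$, use transposition of traces together with $R_s^T = R_s$ and $B_a^T = -B_a$ (the paper phrases this via $(\R C)^T$) to obtain $\tr(U^*(\R C)U) + \tr(U(\R C)U^*) = 2\,\tr(U\,R_s\,\overline{U})$, and then invoke Proposition~\ref{W Oleinik}. The explicit $R_s + i\,B_a$ split you carry out is only a cosmetic difference from the paper's one-line manipulation with $(\R C) + (\R C)^T = 2R_s$.
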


\begin{proof}
Since $U^T = U$ we have 
\begin{eqnarray*}
\tr(U^* \, (\R C) \, U) + \tr(U \, (\R C) \, U^*) 
& = & \tr(\overline{U} \, (\R C) \, U) + \tr(U \, (\R C) \, \overline{U}) 
\\
& = & \tr(\overline{U} \, (\R C) \, U) + \tr(\overline{U} \, (\R C)^T \, U)
\\
& = & 2 \, \tr(U \, R_s \, \overline{U}).
\end{eqnarray*}
Next we use Proposition \ref{W Oleinik} to derive the result.
\end{proof}

\begin{lemm} \label{W R<trR}
Let $U$ be a complex $d \times d$ matrix.
Then
\[
((\R C) \, U \, \xi, U \, \xi) \leq \tr(U^* \, (\R C) \, U) \, \|\xi\|^2
\]
for all $\xi \in \Ci^d$.
\end{lemm}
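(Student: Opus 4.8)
The plan is to reduce the inequality to the elementary fact that, for a positive semidefinite self-adjoint matrix, the largest eigenvalue is dominated by the trace. First I would extract from the standing hypothesis \eqref{values in sector} that $\R C$ is positive semidefinite: for every $x \in \Ri^d$ and $\xi \in \Ci^d$ one has $((\R C)(x)\,\xi,\xi) = \mathrm{Re}\big((C(x)\,\xi,\xi)\big) \geq 0$, since every point $r\,e^{i\psi}$ of $\Sigma_\theta$ satisfies $\mathrm{Re}(r\,e^{i\psi}) = r\cos\psi \geq 0$ because $|\psi| \leq \theta < \frac{\pi}{2}$. Thus each $(\R C)(x)$ is a Hermitian positive semidefinite matrix.

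Next, fixing $x$ and writing $P = (\R C)(x)$ and $T = U^* P U$, the matrix $T$ is again Hermitian and positive semidefinite, and for every $\xi \in \Ci^d$ the adjoint identity gives
\[
((\R C)\,U\,\xi, U\,\xi) = (P\,U\,\xi, U\,\xi) = (U^* P U\,\xi, \xi) = (T\,\xi, \xi).
\]
Diagonalising $T$ (or simply invoking $(T\,\xi,\xi) \leq \lambda_{\max}(T)\,\|\xi\|^2$), and using that all eigenvalues of $T$ are nonnegative so that $\lambda_{\max}(T) \leq \sum_k \lambda_k(T) = \tr(T) = \tr(U^*\,(\R C)\,U)$, one obtains $((\R C)\,U\,\xi,U\,\xi) \leq \tr(U^*\,(\R C)\,U)\,\|\xi\|^2$. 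Since $x$ was arbitrary, this is the asserted pointwise inequality.

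I do not expect a genuine obstacle here: the only points requiring a little care are the correct extraction of positivity of $\R C$ from the sectoriality assumption, and the use of the conjugate-transpose convention in the identity $(P\,U\,\xi,U\,\xi) = (U^*PU\,\xi,\xi)$. Everything else is the standard bound $\lambda_{\max} \leq \tr$ for positive semidefinite Hermitian matrices, equivalently the estimate $\|P^{1/2}U\,\xi\| \leq \|P^{1/2}U\|_{\mathrm{HS}}\,\|\xi\|$ of the operator norm by the Hilbert--Schmidt norm.
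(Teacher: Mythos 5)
Your proposal is correct and follows essentially the same route as the paper: both observe that $\R C \geq 0$ (hence $T = U^*(\R C)U$ is Hermitian positive semidefinite) and then invoke $T \leq \tr(T)\,I$, i.e.\ $\lambda_{\max}(T) \leq \tr(T)$. The only difference is that you spell out the derivation of $\R C \geq 0$ from the sectoriality hypothesis and make the eigenvalue bound explicit, where the paper states these steps more tersely.
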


\begin{proof}
By hypothesis $\R C \geq 0$.
Therefore $((\R C) \, U \, \xi, U \, \xi) \geq 0$ for all $\xi \in \Ci^d$.
It follows that $U^* \, (\R C) \, U \geq 0$.
Hence $U^* \, (\R C) \, U \leq \tr(U^* \, (\R C) \, U) \, I$, where $I$ denotes the identity matrix.
This justifies the claim.
\end{proof}

\begin{lemm} \label{W Ba < Rs}
We have
\[
|(B_a \, \xi, \xi)| \leq (R_s \, \xi, \xi)
\]
for all $\xi \in \Ci^d$.
\end{lemm}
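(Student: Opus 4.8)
My plan is to use only the positivity of the Hermitian part of $C$. Recall that $\R C = R_s + i \, B_a$, and that the hypothesis \eqref{values in sector} forces $\R C(x) \geq 0$, i.e. $(\R C(x) \, \zeta, \zeta) \geq 0$ for all $x \in \Ri^d$ and $\zeta \in \Ci^d$; in particular, testing on real vectors, $R_s \geq 0$, so the right-hand side $(R_s \, \xi, \xi)$ of the claimed inequality is nonnegative. I would fix $\xi \in \Ci^d$ and write $\xi = u + i \, v$ with $u, v \in \Ri^d$.

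First I would expand the two quadratic forms at $u + i \, v$. Since $R_s$ is real and symmetric, its off-diagonal contributions cancel and $(R_s \, \xi, \xi) = (R_s \, u, u) + (R_s \, v, v)$; since $B_a$ is real and anti-symmetric, its diagonal contributions vanish and $(B_a \, \xi, \xi) = 2 \, i \, (B_a \, u, v)$ is purely imaginary, so that $|(B_a \, \xi, \xi)| = 2 \, |(B_a \, u, v)|$. Hence
\[
(\R C \, \xi, \xi) = (R_s \, \xi, \xi) + i \, (B_a \, \xi, \xi) = (R_s \, u, u) + (R_s \, v, v) - 2 \, (B_a \, u, v).
\]

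Now I would insert into $(\R C \, \zeta, \zeta) \geq 0$ first $\zeta = u + i \, v$ and then $\zeta = u - i \, v$, the latter amounting to replacing $v$ by $-v$ in the displayed identity. This yields $(R_s \, u, u) + (R_s \, v, v) \geq 2 \, (B_a \, u, v)$ and $(R_s \, u, u) + (R_s \, v, v) \geq -2 \, (B_a \, u, v)$, whence $2 \, |(B_a \, u, v)| \leq (R_s \, u, u) + (R_s \, v, v)$, which is exactly $|(B_a \, \xi, \xi)| \leq (R_s \, \xi, \xi)$. There is no real obstacle; the only thing to watch is the sign bookkeeping in the quadratic-form identities, and even that is harmless since the last step is symmetric under $v \mapsto -v$. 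Alternatively, one can phrase the same computation by noting that the real $2d \times 2d$ matrix $\begin{pmatrix} R_s & -B_a \\ B_a & R_s \end{pmatrix}$, the realification of the positive Hermitian matrix $\R C$, is positive semidefinite, and then invoking the standard bound $|(B_a \, u, v)| \leq \frac{1}{2} \big( (R_s \, u, u) + (R_s \, v, v) \big)$ on the off-diagonal block of a positive block matrix.
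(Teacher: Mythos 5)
Your proof is correct and follows essentially the same route as the paper: decompose $\xi$ into real and imaginary parts, observe that $(\R C \, \zeta, \zeta) \geq 0$ applied to $\zeta = \xi$ and $\zeta = \overline{\xi}$ gives the two one-sided bounds on $2(B_a \, u, v)$, and combine. The only discrepancy is the sign in the identity for $(B_a \, \xi, \xi)$ (which depends on the inner-product convention), and as you yourself note the symmetry under $v \mapsto -v$ makes that irrelevant.
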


\begin{proof}
Write $\xi = \xi_1 + i \, \xi_2$, where $\xi_1, \xi_2 \in \Ri^d$.
Then $(R_s \, \xi, \xi) = (R_s \, \xi_1, \xi_1) + (R_s \, \xi_2, \xi_2)$ and $(B_a \, \xi, \xi) = -2i \, (B_a \, \xi_1, \xi_2)$.
Since $C$ takes values in $\Sigma_\theta$, we have $((\R C) \, \xi, \xi) \geq 0$ for all $\xi \in \Ci^d$.
Equivalently 
\[
-2 \, (B_a \, \xi_1, \xi_2) \leq (R_s \, \xi_1, \xi_1) + (R_s \, \xi_2, \xi_2).
\]
Replacing $\xi$ by $\overline{\xi}$ and repeating the same process as above we also obtain
\[
2 \, (B_a \, \xi_1, \xi_2) \leq (R_s \, \xi_1, \xi_1) + (R_s \, \xi_2, \xi_2).
\]
The result now follows.
\end{proof}

\begin{lemm} \label{W Dl Ba < Rs}
Let $l \in \{1, \ldots, d\}$ and $\xi \in \Ci^d$.
Then 
\[
|((\D_l B_a) \, \xi, \xi)|^2 \leq M \, (R_s \, \xi, \xi),
\]
where $M = 2 \, \|\xi\|^2 \, \sup_{1 \leq l \leq d} \|\D_l^2 C\|_\infty$.
\end{lemm}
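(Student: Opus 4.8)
The plan is to argue as in the proof of Lemma~\ref{W Ba < Rs}, but now differentiating the nonnegative scalar function obtained by fixing $\xi$ in the quadratic form attached to $\R C$ and feeding it into Lemma~\ref{W f}.

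Fix $l$ and write $\xi = \xi_1 + i\,\xi_2$ with $\xi_1, \xi_2 \in \Ri^d$. Since $\D_l B_a$ is a real anti-symmetric matrix, the same computation as in Lemma~\ref{W Ba < Rs} gives $((\D_l B_a)\,\xi,\xi) = -2i\,((\D_l B_a)\,\xi_1,\xi_2)$, so it suffices to prove $|((\D_l B_a)\,\xi_1,\xi_2)|^2 \leq \frac{1}{4}\,M\,(R_s\,\xi,\xi)$. Introduce
\[
g(x) = ((\R C(x))\,\xi,\xi), \qquad \widetilde g(x) = ((\R C(x))\,\overline\xi,\overline\xi).
\]
Because $C$ takes values in $\Sigma_\theta$ we have $\R C \geq 0$, hence $g \geq 0$ and $\widetilde g \geq 0$; moreover $g, \widetilde g \in W^{2,\infty}(\Ri^d)$, and from $\|\D_l^2(\R C)\|_\infty \leq \|\D_l^2 C\|_\infty$ together with $\|\overline\xi\| = \|\xi\|$ we obtain $\|\D_l^2 g\|_\infty \leq \frac{1}{2} M$ and $\|\D_l^2 \widetilde g\|_\infty \leq \frac{1}{2} M$. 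Writing $\R C = R_s + i\,B_a$ with $R_s$ symmetric and $B_a$ anti-symmetric, a short computation yields $g = (R_s\,\xi_1,\xi_1) + (R_s\,\xi_2,\xi_2) + 2\,(B_a\,\xi_1,\xi_2)$ and $\widetilde g = (R_s\,\xi_1,\xi_1) + (R_s\,\xi_2,\xi_2) - 2\,(B_a\,\xi_1,\xi_2)$, so that $g + \widetilde g = 2\,(R_s\,\xi,\xi)$ and $\D_l(g - \widetilde g) = 4\,((\D_l B_a)\,\xi_1,\xi_2)$.

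Applying Lemma~\ref{W f} to $g$ and to $\widetilde g$ gives $|\D_l g|^2 \leq M\,g$ and $|\D_l \widetilde g|^2 \leq M\,\widetilde g$, whence
\[
16\,|((\D_l B_a)\,\xi_1,\xi_2)|^2 = |\D_l g - \D_l \widetilde g|^2 \leq 2\,|\D_l g|^2 + 2\,|\D_l \widetilde g|^2 \leq 2M\,(g + \widetilde g) = 4M\,(R_s\,\xi,\xi).
\]
Therefore $|((\D_l B_a)\,\xi,\xi)|^2 = 4\,|((\D_l B_a)\,\xi_1,\xi_2)|^2 \leq M\,(R_s\,\xi,\xi)$, which is the assertion. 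No step here is genuinely hard; the only points requiring care are the bookkeeping of the real and imaginary parts—so that passing from $\xi$ to $\overline\xi$ cancels the $R_s$-contributions and isolates the $B_a$-term—and tracking the second-derivative bounds so that the constant $M$ comes out exactly as stated.
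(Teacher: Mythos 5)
Your proof is correct and follows essentially the same route as the paper: both apply Lemma \ref{W f} to the two nonnegative scalar functions $(\R C\,\xi,\xi)$ and $(\R C\,\overline\xi,\overline\xi)$ (equivalently $((R_s\pm i\,B_a)\,\xi,\xi)$, as in the paper's proof) and then combine the two resulting inequalities. The only cosmetic differences are that you pass through the real/imaginary split $\xi=\xi_1+i\xi_2$ explicitly and use $|\D_l g-\D_l\widetilde g|^2\le 2|\D_l g|^2+2|\D_l\widetilde g|^2$, whereas the paper uses the exact identity that adding the squared derivatives isolates $|((\D_l R_s)\xi,\xi)|^2+|((\D_l B_a)\xi,\xi)|^2$; both give the stated constant $M$.
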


\begin{proof}
Let $l \in \{1, \ldots, d\}$ and $\xi \in \Ci^d$.
By Lemma \ref{W Ba < Rs} we deduce that $R_s \pm i \, B_a \geq 0$.
Now we use Lemma \ref{W f} to derive
\begin{eqnarray*}
|(\D_l (R_s + i \, B_a) \, \xi, \xi)|^2
& \leq & 2 \, \|(\D_l^2 (R_s + i \, B_a) \, \xi, \xi)\|_\infty \, ((R_s + i \, B_a) \, \xi, \xi)
\\
& \leq & 2 \, \|\xi\|^2 \, \sup_{1 \leq l \leq d} \|\D_l^2 C\|_\infty \, ((R_s + i \, B_a) \, \xi, \xi)
\end{eqnarray*}
and
\begin{eqnarray*}
|(\D_l (R_s - i \, B_a) \, \xi, \xi)|^2
& \leq & 2 \, \|(\D_l^2 (R_s - i \, B_a) \, \xi, \xi)\|_\infty \, ((R_s - i \, B_a) \, \xi, \xi)
\\
& \leq & 2 \, \|\xi\|^2 \, \sup_{1 \leq l \leq d} \|\D_l^2 C\|_\infty \, ((R_s - i \, B_a) \, \xi, \xi).
\end{eqnarray*}
Adding the two inequalities together gives
\[
|((\D_l R_s) \, \xi, \xi)|^2 + |((\D_l B_a) \, \xi, \xi)|^2 
\leq 2 \, \|\xi\|^2 \, \sup_{1 \leq l \leq d} \|\D_l^2 C\|_\infty \, (R_s \, \xi, \xi),
\]
which clearly implies the result.
\end{proof}

\begin{lemm} \label{W Q xi square general}
Let $Q$ be a complex $d \times d$ matrix.
Suppose there exists an $M > 0$ such that $|(Q \, \xi, \xi)| \leq M \, (R_s \, \xi, \xi)$ for all $\xi \in \Ci^d$.
Then $\|Q \, \xi\|^2 \leq 4 \, M^2 \, \|R_s\|_\infty \, (R_s \, \xi, \xi)$ for all $\xi \in \Ci^d$.
\end{lemm}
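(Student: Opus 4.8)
The plan is to reduce the bound on $\|Q\xi\|$ to the hypothesised scalar bound $|(Q\xi,\xi)| \le M\,(R_s\xi,\xi)$ by a polarisation argument applied to $Q$, followed by Cauchy--Schwarz. First I would note that for arbitrary $\xi,\eta \in \Ci^d$, writing out $(Q(\xi+\eta),\xi+\eta)$, $(Q(\xi-\eta),\xi-\eta)$, $(Q(\xi+i\eta),\xi+i\eta)$ and $(Q(\xi-i\eta),\xi-i\eta)$ and taking the appropriate linear combination recovers $(Q\xi,\eta)$; each of the four terms is controlled by $M$ times a quantity of the form $(R_s\zeta,\zeta)$ with $\zeta \in \{\xi\pm\eta,\ \xi\pm i\eta\}$. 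Expanding $(R_s\zeta,\zeta)$ using that $R_s$ is symmetric and real, the cross terms cancel in the sum and one is left with an estimate
\[
|(Q\xi,\eta)| \le C\,M\,\big( (R_s\xi,\xi) + (R_s\eta,\eta) \big)
\]
for an absolute constant $C$ (one should be able to take $C = 1$, or at worst a small explicit constant, from the standard polarisation identity).

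Next I would specialise this: choose $\eta = Q\xi$. This gives $\|Q\xi\|^2 = (Q\xi, Q\xi) \le C\,M\,\big( (R_s\xi,\xi) + (R_s\,Q\xi, Q\xi) \big)$. The second term is then bounded using $R_s \le \|R_s\|_\infty\, I$ (which holds since $R_s(x)$ is self-adjoint with operator norm at most $\|R_s\|_\infty$, hence $(R_s\,Q\xi,Q\xi) \le \|R_s\|_\infty\,\|Q\xi\|^2$). Substituting, one obtains
\[
\|Q\xi\|^2 \le C\,M\,(R_s\xi,\xi) + C\,M\,\|R_s\|_\infty\,\|Q\xi\|^2,
\]
but this only closes if $C\,M\,\|R_s\|_\infty < 1$, which is not assumed. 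So instead I would avoid introducing $\|Q\xi\|^2$ on the right: apply Cauchy--Schwarz to $(R_s\,Q\xi, Q\xi)^{1/2}$ differently, or — cleaner — note that the hypothesis already forces $\|Q\|$ pointwise to be comparable to $\|R_s\|$; more precisely, from $|(Q\xi,\xi)| \le M\,(R_s\xi,\xi) \le M\,\|R_s\|_\infty\,\|\xi\|^2$ and the numerical-radius inequality $\|Q\| \le 2\sup_{\|\xi\|=1}|(Q\xi,\xi)|$ one gets $\|Q\|_\infty \le 2M\|R_s\|_\infty$. Then $(R_s\,Q\xi,Q\xi) \le \|R_s\|_\infty\|Q\xi\|^2 \le \|R_s\|_\infty\|Q\|_\infty^2\|\xi\|^2$ is not directly what I want either.

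The efficient route: take $\eta = Q\xi$ in the polarised inequality and bound $(R_s\,Q\xi,Q\xi)$ by writing $Q\xi$ and using the \emph{hypothesis itself} is circular, so instead bound it via $(R_s\,Q\xi,Q\xi) \le \|R_s\|_\infty \|Q\xi\|^2$ and then use the polarised inequality once more with $\eta=\xi$ to see $\|Q\xi\|\cdot\|\xi\|$... The cleanest version I would actually write is: from $|(Q\xi,\eta)|\le C M((R_s\xi,\xi)+(R_s\eta,\eta))$ put $\eta = Q\xi/\|Q\xi\|$ (for $Q\xi\neq0$), giving $\|Q\xi\| \le C M((R_s\xi,\xi) + (R_s\,\tfrac{Q\xi}{\|Q\xi\|},\tfrac{Q\xi}{\|Q\xi\|})) \le C M(R_s\xi,\xi) + C M\|R_s\|_\infty$; this is dimensionally wrong, confirming that the correct final move is to keep $\eta=Q\xi$ unnormalised, bound $(R_s Q\xi,Q\xi)\le \|R_s\|_\infty\|Q\xi\|^2$, and then \emph{absorb}: but since absorption needs smallness, the resolution must be that one applies the hypothesis to estimate $(R_s\,Q\xi,Q\xi)$ by iterating — or, most simply, observes that $(R_s\,Q\xi,Q\xi)^{1/2} \le \|R_s\|_\infty^{1/2}\|Q\xi\|$ so that $\|Q\xi\|^2 \le CM(R_s\xi,\xi) + CM\|R_s\|_\infty^{1/2}\|Q\xi\|\cdot\|R_s\|_\infty^{1/2}$ — again absorption. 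I expect the genuine argument in the paper uses $\eta = Q\xi$ together with $(R_sQ\xi,Q\xi)\le\|R_s\|_\infty\|Q\xi\|^2$ and then Young's inequality $CM\|R_s\|_\infty\|Q\xi\|^2 \le \tfrac12\|Q\xi\|^2 + \cdots$ only after first reducing $M$ — or, what is surely intended, it applies Cauchy--Schwarz as $\|Q\xi\|^2=(Q\xi,Q\xi)\le |(Q\xi,Q\xi)|$ and invokes the hypothesis with the \emph{single} vector in a way that gives $\|Q\xi\|^2 \le M(R_s\xi,\xi)^{1/2}(R_s Q\xi,Q\xi)^{1/2}\cdot(\text{const})$, then $(R_sQ\xi,Q\xi)^{1/2}\le\|R_s\|_\infty^{1/2}\|Q\xi\|$, yielding $\|Q\xi\| \le (\text{const})\,M\,\|R_s\|_\infty^{1/2}(R_s\xi,\xi)^{1/2}$ and hence $\|Q\xi\|^2 \le 4M^2\|R_s\|_\infty(R_s\xi,\xi)$. \textbf{The main obstacle} is getting the polarisation of $Q$ in the sharp form $|(Q\xi,\eta)|\le M(R_s\xi,\xi)^{1/2}(R_s\eta,\eta)^{1/2}$ (rather than with a sum on the right), since only the multiplicative form lets one take $\eta=Q\xi$, bound $(R_sQ\xi,Q\xi)^{1/2}\le\|R_s\|_\infty^{1/2}\|Q\xi\|$, and divide by $\|Q\xi\|$ cleanly with the constant $4$; establishing that multiplicative polarisation bound for a general (non-self-adjoint) $Q$ from the self-adjoint-looking hypothesis $|(Q\xi,\xi)|\le M(R_s\xi,\xi)$ is the delicate point and is presumably where $R_s \ge 0$ and a change of variables diagonalising $R_s$ on its range get used.
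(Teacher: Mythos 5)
Your proposal eventually lands on exactly the paper's argument --- polarise to obtain a \emph{multiplicative} bound
$|(Q\xi,\eta)| \le 2M\,(R_s\xi,\xi)^{1/2}(R_s\eta,\eta)^{1/2}$,
then take the supremum over $\|\eta\|=1$ (equivalently, set $\eta = Q\xi$) together with $(R_s\eta,\eta) \le \|R_s\|_\infty\|\eta\|^2$, and square to get the constant $4M^2\|R_s\|_\infty$. But you flag the multiplicative polarisation bound as ``the delicate point,'' leave it unproved, and speculate that it requires diagonalising $R_s$ on its range. That is the gap, and it is not actually delicate: you already derived the additive form
$|(Q\xi,\eta)| \le M\big((R_s\xi,\xi)+(R_s\eta,\eta)\big)$
(polarisation identity for the sesquilinear form $(Q\,\cdot,\cdot)$, with the $R_s$-cross terms cancelling when summed over $k=0,\dots,3$). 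Now replace $\xi$ by $t\xi$ and $\eta$ by $t^{-1}\eta$ for $t>0$; the left side is unchanged, so
\[
|(Q\xi,\eta)| \le M\big(t^{2}(R_s\xi,\xi)+t^{-2}(R_s\eta,\eta)\big),
\]
and minimising over $t$ (taking $t^{2}=(R_s\eta,\eta)^{1/2}/(R_s\xi,\xi)^{1/2}$ when both factors are nonzero, and letting $t\to\infty$ or $t\to 0$ otherwise, using $R_s\ge 0$) gives exactly $|(Q\xi,\eta)| \le 2M(R_s\xi,\xi)^{1/2}(R_s\eta,\eta)^{1/2}$. This scaling step is the entire content of what the paper compresses into ``polarisation gives.'' Once you have the multiplicative form, your final paragraph is correct; every earlier branch in your proposal (feeding $\eta=Q\xi$ into the additive form and trying to absorb, normalising $\eta$, bounding $\|Q\|$ by the numerical radius) is a dead end and should be removed, since none of them can recover the factor $(R_s\xi,\xi)$ on the right rather than $\|\xi\|^2$.
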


\begin{proof}
Since $|(Q \, \xi, \xi)| \leq M \, (R_s \, \xi, \xi)$ for all $\xi \in \Ci^d$, polarisation gives
\[
|(Q \, \xi, \eta)| 
\leq 2 \, M \, (R_s \, \xi, \xi)^{1/2} \, (R_s \, \eta, \eta)^{1/2}
\leq 2 \, M \, \|R_s\|_\infty^{1/2} \, \|\eta\| \, (R_s \, \xi, \xi)^{1/2}
\]
for all $\xi, \eta \in \Ci^d$.
It follows that
\[
\|Q \, \xi\| \leq 2 \, M \, \|R_s\|_\infty^{1/2} \, (R_s \, \xi, \xi)^{1/2}
\]
for all $\xi \in \Ci^d$, which justifies the claim.
\end{proof}

\begin{lemm} \label{W C xi square}
We have
\[
\|C \, \xi\|^2 \leq 16 \, (1 + \tan\theta)^2 \, \, \|R_s\|_\infty \, (R_s \, \xi, \xi)
\]
for all $\xi \in \Ci^d$.
\end{lemm}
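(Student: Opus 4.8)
The plan is to reduce the claim to Lemma~\ref{W Q xi square general} by showing that the hypothesis $|(C \, \xi, \xi)| \leq M \, (R_s \, \xi, \xi)$ holds with an explicit $M$. The subtle point is that $C$ here is a complex matrix acting on $\Ci^d$, so $(C\,\xi,\xi)$ need not be bounded by $(\R C\,\xi,\xi)$ for complex $\xi$ in the naive way; only the sectoriality estimate $|(C\,\xi,\xi)| \leq (1 + \tan\theta)\,((\R C)\,\xi,\xi)$ is literally available, and that was established (as an intermediate step) for $\xi \in \Ci^d$ inside the proof of Lemma~\ref{W preOleinik}. So first I would record that $|(C\,\xi,\xi)| \leq (1 + \tan\theta)\,((\R C)\,\xi,\xi)$ for all $\xi \in \Ci^d$.

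Next I would relate $(\R C)\,\xi,\xi)$ to $(R_s\,\xi,\xi)$. Since $\R C = R_s + i\,B_a$, we have $((\R C)\,\xi,\xi) = (R_s\,\xi,\xi) + i\,(B_a\,\xi,\xi)$, and taking real parts (the left side being real) gives $((\R C)\,\xi,\xi) = \R\big((R_s\,\xi,\xi) + i\,(B_a\,\xi,\xi)\big)$. Because $(R_s\,\xi,\xi)$ is real and $(B_a\,\xi,\xi)$ is purely imaginary for $\xi\in\Ci^d$ (as in the proof of Lemma~\ref{W Ba < Rs}), this reads $((\R C)\,\xi,\xi) = (R_s\,\xi,\xi) - \I(B_a\,\xi,\xi)$. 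Using Lemma~\ref{W Ba < Rs}, namely $|(B_a\,\xi,\xi)| \leq (R_s\,\xi,\xi)$, I obtain $((\R C)\,\xi,\xi) \leq 2\,(R_s\,\xi,\xi)$. Combining with the previous step,
\[
|(C\,\xi,\xi)| \leq 2\,(1 + \tan\theta)\,(R_s\,\xi,\xi)
\]
for all $\xi \in \Ci^d$.

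Finally I would apply Lemma~\ref{W Q xi square general} with $Q = C$ and $M = 2\,(1 + \tan\theta)$, which yields
\[
\|C\,\xi\|^2 \leq 4\,M^2\,\|R_s\|_\infty\,(R_s\,\xi,\xi) = 16\,(1 + \tan\theta)^2\,\|R_s\|_\infty\,(R_s\,\xi,\xi)
\]
for all $\xi \in \Ci^d$, which is exactly the asserted inequality.

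The only genuine obstacle is the bookkeeping with complex vectors in the second step: one must be careful that $(R_s\,\xi,\xi)$ stays real and $(B_a\,\xi,\xi)$ stays imaginary, so that taking real parts in $((\R C)\,\xi,\xi) = (R_s\,\xi,\xi) + i\,(B_a\,\xi,\xi)$ genuinely extracts the $R_s$ term. Everything else is a direct citation of Lemmas~\ref{W Ba < Rs} and~\ref{W Q xi square general} together with the sectoriality bound already used in Section~2, so no new estimate is needed.
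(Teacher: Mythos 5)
Your proof is correct and matches the paper's argument essentially step for step: the sectoriality bound $|(C\,\xi,\xi)| \leq (1+\tan\theta)((\R C)\,\xi,\xi)$, then $((\R C)\,\xi,\xi) \leq 2\,(R_s\,\xi,\xi)$ via Lemma~\ref{W Ba < Rs}, and finally Lemma~\ref{W Q xi square general} with $M = 2(1+\tan\theta)$. The only difference is that you spell out the real/imaginary bookkeeping for the second step, which the paper leaves implicit.
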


\begin{proof}
Let $\xi \in \Ci^d$.
Since $C$ takes values in $\Sigma_\theta$, we have
\[
|(C \, \xi, \xi)|
\leq ((\R C) \, \xi, \xi) + |((\I C) \, \xi, \xi)|
\leq (1 + \tan\theta) \, ((\R C) \, \xi, \xi).
\]
However $((\R C) \, \xi, \xi) \leq 2 \, (R_s \, \xi, \xi)$ by Lemma \ref{W Ba < Rs}.
It follows that 
\[
|(C \, \xi, \xi)|
\leq 2 \, (1 + \tan\theta) \, (R_s \, \xi, \xi).
\]
Using Lemma \ref{W Q xi square general} we obtain
\[
\|C \, \xi\|^2 
\leq 16 \, (1 + \tan\theta)^2 \, \|R_s\|_\infty \, (R_s \, \xi, \xi)
\]
as required.
\end{proof}

Recall that the Hilbert-Schmidt norm for a matrix $V \in M_{d \times d}(\Ci)$ is defined by 
\[
\|V\|_{HS} 
= \left( \tr (V^* \, V) \right)^{1/2} 
= \left( \sum_{j=1}^d \|V e_j\|^2 \right)^{1/2}.
\]

\begin{lemm} \label{W HS norm lemma}
Let $U$ a complex $d \times d$ matrix with $U^T = U$.
Then
\[
\|C \, U\|_{HS}^2 \leq 16 \, (1 + \tan\theta)^2 \, \|R_s\|_\infty \, \tr (U \, R_s \, \overline{U}).
\]
\end{lemm}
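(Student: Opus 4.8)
The plan is to deduce the Hilbert--Schmidt estimate from the pointwise bound of Lemma \ref{W C xi square}, applied to $U$ column by column. By definition $\|C \, U\|_{HS}^2 = \sum_{j=1}^d \|C \, U e_j\|^2$, so applying Lemma \ref{W C xi square} with $\xi = U e_j \in \Ci^d$ for each $j \in \{1, \ldots, d\}$ gives
\[
\|C \, U\|_{HS}^2
\leq 16 \, (1 + \tan\theta)^2 \, \|R_s\|_\infty \, \sum_{j=1}^d (R_s \, U e_j, U e_j).
\]

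It then remains to identify the sum on the right with the claimed trace. Since $R_s$ is real symmetric, $(R_s \, U e_j, U e_j) = (U e_j)^* \, R_s \, (U e_j) = (U^* \, R_s \, U)_{jj}$, hence $\sum_{j=1}^d (R_s \, U e_j, U e_j) = \tr(U^* \, R_s \, U)$. Now I would use the hypothesis $U^T = U$, which yields $U^* = \overline{U}$, together with $R_s^T = R_s$ and the invariance of the trace under transposition: since $(\overline{U} \, R_s \, U)^T = U^T \, R_s^T \, \overline{U}^T = U \, R_s \, \overline{U}$, we get
\[
\tr(U^* \, R_s \, U) = \tr(\overline{U} \, R_s \, U) = \tr(U \, R_s \, \overline{U}).
\]
Substituting this into the previous display gives exactly the assertion.

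There is no real obstacle in this argument; it is a direct consequence of Lemma \ref{W C xi square}. The only point that needs a little care is the bookkeeping of conjugates and transposes when rewriting $\tr(U^* \, R_s \, U)$ as $\tr(U \, R_s \, \overline{U})$, where the symmetry of both $U$ and $R_s$ is used in an essential way (the identity would fail for a general matrix $U$). As an alternative one could observe that $U^* \, (\R C) \, U \geq 0$ and diagonalise, in the spirit of the proof of Lemma \ref{W R<trR}, but the transpose computation above is shorter and self-contained.
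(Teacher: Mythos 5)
Your proof is correct and follows essentially the same route as the paper: expand $\|C\,U\|_{HS}^2 = \sum_j \|C\,Ue_j\|^2$, apply Lemma \ref{W C xi square} column by column, and recognise the resulting sum as $\tr(U\,R_s\,\overline{U})$. The only difference is that you spell out the trace identity $\sum_j (R_s\,Ue_j, Ue_j) = \tr(U^*\,R_s\,U) = \tr(U\,R_s\,\overline{U})$ (using $U^T=U$ and $R_s^T=R_s$), which the paper states without comment.
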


\begin{proof}
We note that 
\begin{eqnarray*}
\|C \, U\|_{HS}^2
= \sum_{j=1}^d \|C \, U e_j\|_2^2
& \leq & 16 \, (1 + \tan\theta)^2 \, \|R_s\|_\infty 
	\sum_{j=1}^d (R_s \, U e_j, U e_j)
\\
& = & 16 \, (1 + \tan\theta)^2 \, \|R_s\|_\infty \, \tr (U \, R_s \, \overline{U}),
\end{eqnarray*}
where we used Lemma \ref{W C xi square} in the second step.
\end{proof}

\section{$L_p$ extension} \label{S4.3}

Let $S$ be the contraction $C_0$-semigroup generated by $-A$.
In this section we will extend $S$ to a contraction $C_0$-semigroup on $L_p(\Ri^d)$ for all $p \in (1, \infty)$ with $|1 - \frac{2}{p}| \leq \cos\theta$, under the condition that $B_a = 0$.

\begin{proof}[Proof of Proposition \ref{W Lp extension}]
We proceed via two steps.
\\
{\bf Step 1:} Suppose that $A$ is strongly elliptic.
\\
Then $S$ extends consistently to a $C_0$-semigroup $S^{(p)}$ on $L_p(\Ri^d)$ by \cite[Theorem 2.21]{AMT1}.
Using duality arguments we can assume without loss of generality that $p \geq 2$.
Let $-A_p$ be the generator of $S^{(p)}$. 
Let $u \in \cd$, where $\cd = D(A) \cap D(A_p) \cap L_\infty(\Ri^d)$.
Since $A$ is strongly elliptic, the form $\gota_0$ is closable and $D(\overline{\gota_0}) = W^{1,2}(\Ri^d)$.
By construction $D(A) \subset D(\overline{\gota_0})$.
Therefore $u \in W^{1,2}(\Ri^d)$.
Set $v = |u|^{p-2} \, u$.
Then $v \in L_q(\Ri^d) \cap L_2(\Ri^d)$, where $q$ is the dual exponent of $p$.
By \cite[Lemma 7.7]{GT} we have
\[
\D_l v = \frac{p}{2} \, |u|^{p-2} \, \D_l u + \frac{p-2}{2} \, |u|^{p-4} \, u^2 \, \D_l \overline{u}
\]
for all $l \in \{1, \ldots, d\}$.
It follows that $v \in W^{1,2}(\Ri^d)$.
Our aim is to prove the inequality $\R \int (A_p u) \, \overline{v} \geq 0$, where here and in the rest of this paragraph the integral is over the set $\{x \in \Ri^d: u(x) \neq 0\}$.
Indeed we have
\begin{eqnarray*}
\int (A_p u) \, \overline{v}
& = & \int (A u) \, \overline{v}
	= \overline{\gota_0}(u, v)
	= \sum_{k,l=1}^d \int c_{kl} \, (\D_k u) \, \D_l \overline{v}
\\
& = & \sum_{k,l=1}^d \int c_{kl} \, (\D_k u) \, 
	\Big( \frac{p}{2} \, |u|^{p-2} \, \D_l \overline{u} 
	+ \frac{p-2}{2} \, |u|^{p-4} \, \overline{u}^2 \, \D_l u \Big)
\\
& = & \frac{1}{2} \int |u|^{p-4} \, \sum_{k,l=1}^d \Big( p \, c_{kl} \, |u|^2 \, (\D_k u) \, \D_l \overline{u}
	+ (p-2) \, c_{kl} \, \overline{u}^2 \, (\D_k u) \, \D_l u \Big)
\\
& = & \frac{1}{2} \int |u|^{p-4} \, 
	\Big( p \, (C \, u \, \nabla \overline{u}, u \, \nabla \overline{u})
	+ (p-2) \, (C \, \overline{u} \, \nabla u, u \, \nabla \overline{u}) \Big).
\end{eqnarray*}
Write $u \, \nabla \overline{u} = \xi + i \, \eta$, where $\xi, \eta \in \Ri^d$.
Then
\[
\R (C \, u \, \nabla \overline{u}, u \, \nabla \overline{u})
= (R_s \, \xi, \xi) + (R_s \, \eta, \eta) + 2 \, (B_a \, \xi, \eta)
= (R_s \, \xi, \xi) + (R_s \, \eta, \eta)
\]
as $B_a = 0$ by hypothesis and
\[
\R (C \, \overline{u} \, \nabla u, u \, \nabla \overline{u})
= (R_s \, \xi, \xi) - (R_s \, \eta, \eta) + 2 \, (B_s \, \xi, \eta).
\]
Therefore
\begin{eqnarray*} 
\R \int (A_p u) \, \overline{v}
& = & \int |u|^{p-4} \, \Big( (p-1) \, (R_s \, \xi, \xi) + (R_s \, \eta, \eta) 
	+ (p-2) \, (B_s \, \xi, \eta) \Big)
\\
& = & \int |u|^{p-4} \, \Big( (R_s \, \xi', \xi') + (R_s \, \eta, \eta) 
	+ \frac{p-2}{\sqrt{p-1}} \, (B_s \, \xi', \eta) \Big),
\end{eqnarray*}
where $\xi' = \sqrt{p-1} \, \xi$.
If $\theta = 0$ then it follows from Lemma \ref{W Bs < Rs} that $(B_s \, \xi', \eta) = 0$.
Consequently
\[
\R \int (A_p u) \, \overline{v}
= \int |u|^{p-4} \, \Big( (R_s \, \xi', \xi') + (R_s \, \eta, \eta) \Big)
\geq 0.
\]
If $\theta \neq 0$ then 
\[
\R \int (A_p u) \, \overline{v}
\geq \int |u|^{p-4} \, \Big( (R_s \, \xi', \xi') + (R_s \, \eta, \eta) 
	- 2 \, \cot \theta \, |(B_s \, \xi', \eta)| \Big) 
	\geq 0,
\]
where we again used Lemma \ref{W Bs < Rs} and the fact that $|1 - \frac{2}{p}| \leq \cos \theta$ is equivalent to $|p-2| \, \tan \theta \leq 2 \, \sqrt{p-1}$.
In either case the restriction $A_p|_\cd$ is accretive.
Since $\cd$ is a core for $A_p$, we also have that $A_p$ is accretive by \cite[Lemma 3.4]{LuP}.
By the Lumer-Phillips theorem, $S^{(p)}$ is a contraction semigroup.
\\
{\bf Step 2:} Suppose that $A$ is degenerate elliptic.
\\
Let $n \in \Ni$.
Let $A_{[n]} = A - \frac{1}{n} \, \Delta$, where $\Delta = \D_1^2 + \ldots + \D_d^2$.
Then $A_{[n]}$ is strongly elliptic.
Let $S^{[n]}$ be the contraction $C_0$-semigroup generated by $A_{[n]}$.
Then $S^{[n]}$ extends consistently to a contraction $C_0$-semigroup $S^{(n,p)}$ on $L_p(\Ri^d)$ by Step 1.
Using duality arguments we can assume without loss of generality that $p \in (1, 2)$.

Let $t > 0$ and $u \in L_{2,c}(\Ri^d)$.
By \cite[Corollary 3.9]{AE2} we have $\lim_{n \to \infty} S_t^{[n]} u = S_t u$ in $L_2(\Ri^d)$.
Also by \cite[Lemma 4.5]{AE2} we obtain $\lim_{n \to \infty} S_t^{[n]} u = S_t u$ in $L_1(\Ri^d)$.
Interpolation then gives $\lim_{n \to \infty} S_t^{[n]} u = S_t u$ in $L_p(\Ri^d)$.
It follows that $\|S_t u\|_p \leq \|u\|_p$ as $S^{(n,p)}$ is contractive on $L_p(\Ri^d)$.
But $L_{2,c}(\Ri^d)$ is dense in $L_2(\Ri^d) \cap L_p(\Ri^d)$.
Therefore $\|S_t u\|_p \leq \|u\|_p$ for all $u \in L_2(\Ri^d) \cap L_p(\Ri^d)$.
That is, $S_t|_{L_2(\Ri^d) \cap L_p(\Ri^d)}$ extends continuously to a contraction operator $S_t^{(p)}$ on $L_p(\Ri^d)$.
We now use \cite[Proposition 1]{Voi} to conclude that $S^{(p)}$ is a $C_0$-semigroup on $L_p(\Ri^d)$.
\end{proof}

\section{The operator $B_p$}

Let $p \in (1, \infty)$.
Let $q$ be such that $\frac{1}{p} + \frac{1}{q} = 1$.
Define 
\begin{equation} \label{W Hq}
H_q u = - \sum_{k,l=1}^d \D_k (\overline{c_{kl}} \, \D_l u)
\end{equation}
on the domain
\[
D(H_q) = C_c^\infty(\Ri^d).
\]
Next define $B_p = (H_q)^*$, which is the dual of $H_q$.
Then $B_p$ is closed by \cite[Subsection III.5.5]{Kat1}.
Also note that $W^{2,p}(\Ri^d) \subset D(B_p)$ and
\[
B_p u = - \sum_{k,l=1}^d \D_l (c_{kl} \, \D_k u)
\]
for all $u \in W^{2,p}(\Ri^d)$.

We will prove at the end of this section that $C_c^\infty(\Ri^d)$ is a core for $B_p$ if $|1 - \frac{2}{p}| < \cos\theta$ and $B_a = 0$.
In the next section we will prove that $A_p = B_p$ under the same assumptions.
The proofs require a lot of preparation.

\begin{prop} \label{W 1st ineq}
Suppose $|1 - \frac{2}{p}| \leq \cos \theta$ and $B_a = 0$.
Then
\[
\R (B_p u, |u|^{p-2} \, u) \geq 0
\]
for all $u \in W^{2,p}(\Ri^d)$.
\end{prop}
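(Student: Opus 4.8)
The plan is to integrate by parts and thereby reduce the claim to exactly the algebraic inequality that already underlies Step~1 of the proof of Proposition~\ref{W Lp extension}; the genuinely new point is to justify that integration by parts when $u$ is merely in $W^{2,p}(\Ri^d)$, since there the test function was drawn from the core $\cd$, whose elements are bounded.

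I would first treat $u \in C_c^\infty(\Ri^d)$. Set $v = |u|^{p-2}\,u$. Applying \cite[Lemma 7.7]{GT} to the regularisations $v_\varepsilon = (\varepsilon^2 + |u|^2)^{(p-2)/2}\,u \in C_c^\infty(\Ri^d)$ and letting $\varepsilon \downarrow 0$ shows that $v \in W^{1,1}(\Ri^d)$ with
\[
\D_l v = \tfrac{p}{2}\,|u|^{p-2}\,\D_l u + \tfrac{p-2}{2}\,|u|^{p-4}\,u^2\,\D_l\overline{u}
\ \text{ on } \{u \neq 0\},
\qquad \D_l v = 0 \ \text{ a.e. on } \{u = 0\};
\]
the passage to the limit is dominated convergence, the point being that $|u|^{p-2}\,|\nabla u|\,\one_{\{u \neq 0\}}$ is integrable because $p > 1$ and $u$ is smooth with compact support. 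Since each $c_{kl}\,\D_k u$ is smooth with compact support, integration by parts then gives
\[
(B_p u, v) = \sum_{k,l=1}^d \int_{\Ri^d} c_{kl}\,(\D_k u)\,\D_l\overline{v}.
\]
From here the computation is carried out exactly as in Step~1 of the proof of Proposition~\ref{W Lp extension}: writing $u\,\nabla\overline{u} = \xi + i\,\eta$ with $\xi,\eta \in \Ri^d$, using $B_a = 0$ to kill the cross term in $\R(C\,u\,\nabla\overline{u}, u\,\nabla\overline{u})$, substituting $\xi' = \sqrt{p-1}\,\xi$, and invoking Lemma~\ref{W Bs < Rs} to bound the remaining $B_s$-term by $\tfrac{|p-2|}{2\sqrt{p-1}}\,\tan\theta\,\big((R_s\,\xi',\xi') + (R_s\,\eta,\eta)\big)$, one arrives at $\R(B_p u, v) \geq 0$, the hypothesis $|1-\tfrac{2}{p}| \leq \cos\theta$ being equivalent to $|p-2|\,\tan\theta \leq 2\sqrt{p-1}$ and $R_s \geq 0$ following from $\R C \geq 0$ together with $B_a = 0$.

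To remove the restriction $u \in C_c^\infty(\Ri^d)$ I would use density of $C_c^\infty(\Ri^d)$ in $W^{2,p}(\Ri^d)$. If $u_n \to u$ in $W^{2,p}(\Ri^d)$ then $B_p u_n \to B_p u$ in $L_p(\Ri^d)$ (immediate, since the coefficients lie in $W^{1,\infty}$), while $|u_n|^{p-2}\,u_n \to |u|^{p-2}\,u$ in $L_q(\Ri^d)$ by the norm-continuity of $w \mapsto |w|^{p-2}\,w$ from $L_p(\Ri^d)$ to $L_q(\Ri^d)$. Hence $\R(B_p u_n, |u_n|^{p-2}\,u_n) \to \R(B_p u, |u|^{p-2}\,u)$, and since every term on the left is nonnegative, so is the limit.

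I expect the second paragraph to contain the only real difficulty: verifying that $|u|^{p-2}\,u$ is weakly differentiable with the stated gradient and that the integration by parts loses nothing, in particular the local integrability of $|u|^{p-2}\,|\nabla u|$ near the zero set of $u$ when $1 < p < 2$ and the absence of a boundary contribution. Passing first through $C_c^\infty(\Ri^d)$ keeps this within standard chain-rule-for-Sobolev-functions territory; attempting the weighted integration by parts directly on $W^{2,p}(\Ri^d)$ would additionally force a spatial cut-off argument and an a~priori bound of the type $\int |u|^{p-2}\,|\nabla u|^2 \lesssim \|u\|_p^{p-1}\,\|\Delta u\|_p$, which is why the approximation route is preferable.
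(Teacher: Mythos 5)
Your argument is correct, but it routes around the paper's own proof in one meaningful way. The paper invokes \cite[Proposition~3.5]{MS} directly to write $(B_p u, |u|^{p-2}\,u)$ for $u \in W^{2,p}(\Ri^d)$ as a sum of three integrals over $\{u \neq 0\}$, and then performs exactly the algebraic reduction you describe (write $u\,\nabla\overline{u} = \xi + i\,\eta$, use $B_a = 0$, substitute $\xi' = \sqrt{p-1}\,\xi$, apply Lemma~\ref{W Bs < Rs}, use $|p-2|\tan\theta \leq 2\sqrt{p-1}$). You instead establish the expansion only for $u \in C_c^\infty(\Ri^d)$ — where the chain-rule identity for $\nabla(|u|^{p-2}u)$ and the subsequent integration by parts are easiest to justify — and then transfer the resulting inequality to all of $W^{2,p}(\Ri^d)$ by a density/continuity argument: $u \mapsto B_p u$ is bounded from $W^{2,p}$ to $L_p$, and $w \mapsto |w|^{p-2}w$ is norm-continuous from $L_p$ to $L_q$, so the pairing passes to the limit. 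This buys you independence from the Metafune--Spina reference at the cost of an extra limiting argument; the paper buys brevity by leaning on a result that already handles $W^{2,p}$ functions directly. The two proofs meet at the same algebraic inequality, so there is no substantive disagreement.

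Two small cautions, neither of which changes the verdict. First, the phrase ``applying \cite[Lemma~7.7]{GT} to the regularisations $v_\varepsilon$'' is slightly off: $v_\varepsilon$ is already smooth, so no Sobolev chain rule is needed there; what you actually need is the passage $\varepsilon \downarrow 0$, and the assertion that $|u|^{p-2}|\nabla u|\,\one_{\{u\neq 0\}}$ is integrable for $1 < p < 2$ is true for $u \in C_c^\infty$ but not an immediate triviality — it deserves at least a sentence (e.g.\ a bound near the zero set, or a monotone-convergence argument showing the distributional limit lands in $L^1$). The paper sidesteps this by citing \cite{MS}, so your level of detail is comparable, but the justification as written is a little thin. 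Second, your observation that $R_s \geq 0$ requires $B_a = 0$ is not quite right as phrased: $R_s \geq 0$ follows from $\R C = R_s + i\,B_a \geq 0$ by restricting to real vectors, regardless of whether $B_a$ vanishes (this is Lemma~\ref{W Ba < Rs} with $\xi$ real). The hypothesis $B_a = 0$ is used to kill the cross term $(B_a\,\xi,\eta)$, not to guarantee positivity of $R_s$. These are cosmetic; the proof stands.
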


\begin{proof}
Let $u \in W^{2,p}(\Ri^d)$.
It follows from the proof of \cite[Proposition 3.5]{MS} that
\begin{eqnarray} 
(B_p u, |u|^{p-2} \, u)
& = & \int_{[u \neq 0]} |u|^{p-2} \, (C \, \nabla \overline{u}, \nabla \overline{u})
\nonumber
\\
& &
{} + (p - 2) \, \int_{[u \neq 0]} |u|^{p-4} \, 
	\big( C \, \R(u \, \nabla \overline{u}), \R(u \, \nabla \overline{u}) \big)
\nonumber
\\
& &
	{} - i \, (p - 2) \, \int_{[u \neq 0]} |u|^{p-4} \, 
	\big( C \, \R(u \, \nabla \overline{u}), \I(u \, \nabla \overline{u}) \big).
\label{W MS expansion}
\end{eqnarray}
Write $u \, \nabla \overline{u} = \xi + i \, \eta$, where $\xi, \eta \in \Ri^d$.
Then
\begin{eqnarray*}
|u|^2 \, (C \, \nabla \overline{u}, \nabla \overline{u})
& = & (C \, u \, \nabla \overline{u}, u \, \nabla \overline{u})
	= \big( C (\xi + i \, \eta), \xi + i \, \eta \big)
\\
& = & (R \, \xi, \xi) + (R \, \eta, \eta) + (B \, \xi, \eta) - (B \, \eta, \xi)
	\\
	& {} & {} - i \, \big( (R \, \eta, \xi) - (R \, \xi, \eta) + (B \, \xi, \xi) + (B \, \eta, \eta) \big).
\end{eqnarray*}
Therefore
\begin{eqnarray*}
\R \big( |u|^{2} \, (C \, \nabla \overline{u}, \nabla \overline{u}) \big)
& = & (R \, \xi, \xi) + (R \, \eta, \eta) + (B \, \xi, \eta) - (B \, \eta, \xi)
\\
& = & (R_s \, \xi, \xi) + (R_s \, \eta, \eta) + 2 \, (B_a \, \xi, \eta)
\\
& = & (R_s \, \xi, \xi) + (R_s \, \eta, \eta)
\end{eqnarray*}
since $B_a = 0$.
We also have
\[
\R \big( C \, \R(u \, \nabla \overline{u}), \R(u \, \nabla \overline{u}) \big)
= \R (C \, \xi, \xi)
= (R \, \xi, \xi)
= (R_s \, \xi , \xi).
\]
Similarly
\[
\R \big( i \, \big( C \, \R(u \, \nabla \overline{u}), \I(u \, \nabla \overline{u}) \big) \big)
= \R \big( i \, (C \, \xi, \eta) \big)
= -(B \, \xi, \eta)
= -(B_s \, \xi , \eta)
\]
since $B_a = 0$.
Hence taking the real parts on both sides of \eqref{W MS expansion} yields
\[
\R (B_p u, |u|^{p-2} \, u)
= \int_{[u \neq 0]} |u|^{p-4} \, \Big( (p-1) \, (R_s \, \xi, \xi) + (R_s \, \eta, \eta) 
	+ (p-2) \, (B_s \, \xi, \eta) \Big)
\]
since $B_a = 0$.
Now we argue as in Step 1 of the proof of Proposition \ref{W Lp extension} to derive the claim.
\end{proof}

Let $J \in C_c^\infty(\Ri^d, \Ri)$ be such that $J \geq 0$, $\supp J \subset B_1(0)$ and $\int_{\Ri^d} J = 1$.
For each $n \in \Ni$ and $x \in \Ri^d$ define $J_n(x) = n^d \, J(n \, x)$. 
For all $n \in \Ni$ define the bounded operator $T_n^{(1)}: W^{1,p}(\Ri^d) \longrightarrow L_p(\Ri^d)$ by
\[
T_n^{(1)} u 
= - \sum_{k,l=1}^d \int_{\Ri^d} J_n(y) \, 
	\Big( (I-L_y) \, (\D_l c_{kl}) \Big) \, L_y(\D_k u) \, dy
\]
and the bounded operator $T_n^{(2)}: W^{1,p}(\Ri^d) \longrightarrow L_p(\Ri^d)$ by
\[
T_n^{(2)} u
= - \sum_{k,l=1}^d \int_{\Ri^d}
	\left( \frac{\D}{\D y_l} \, \Big( J_n(y) \, (I-L_y) \, c_{kl} \Big) \right) \,
	L_y(\D_k u) \, dy,
\]
where $(L_y u)(x) = u(x - y)$ for all $x, y \in \Ri^d$.
Also define for all $n \in \Ni$ the operator $T_n: W^{1,p}(\Ri^d) \longrightarrow L_p(\Ri^d)$ by
\begin{equation} \label{W Tn definition}
T_n = T_n^{(1)} + T_n^{(2)}.
\end{equation}

\begin{lemm} \label{W Tn1}
The sequence $\{T_n^{(1)}\}_{n \in \Ni}$ is bounded.
Furthermore $\lim_{n \to \infty} \|T_n^{(1)} u\|_p = 0$ for all $u \in W^{1,p}(\Ri^d)$.
\end{lemm}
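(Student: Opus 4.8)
The plan is to estimate $T_n^{(1)} u$ pointwise and then reduce the $L_p$-bound to Young's inequality for convolutions, exploiting the regularity of the coefficients rather than that of $u$. Unwinding the definitions, for almost every $x \in \Ri^d$ one has
\[
(T_n^{(1)} u)(x) = - \sum_{k,l=1}^d \int_{\Ri^d} J_n(y) \, \Big( (\D_l c_{kl})(x) - (\D_l c_{kl})(x - y) \Big) \, (\D_k u)(x - y) \, dy,
\]
and since $J_n \geq 0$, $\int J_n = 1$ and $\supp J_n \subset B_{1/n}(0)$, this gives the pointwise majorant
\[
|(T_n^{(1)} u)(x)| \leq \sum_{k,l=1}^d \Big( \sup_{|y| \leq 1/n} \|(I - L_y)(\D_l c_{kl})\|_\infty \Big) \, \big( J_n * |\D_k u| \big)(x).
\]

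Next I would use that $c_{kl} \in W^{2,\infty}(\Ri^d, \Ci)$, so $\D_l c_{kl} \in W^{1,\infty}(\Ri^d, \Ci)$ and therefore admits a Lipschitz continuous representative with Lipschitz constant bounded in terms of $\|c_{kl}\|_{W^{2,\infty}(\Ri^d)}$; consequently $\sup_{|y| \leq 1/n} \|(I - L_y)(\D_l c_{kl})\|_\infty \leq \frac{1}{n} \, \mathrm{Lip}(\D_l c_{kl})$. Combining this with the estimate $\|J_n * |\D_k u|\|_p \leq \|J_n\|_1 \, \|\D_k u\|_p = \|\D_k u\|_p$ and taking $L_p$-norms yields
\[
\|T_n^{(1)} u\|_p \leq \frac{1}{n} \sum_{k,l=1}^d \mathrm{Lip}(\D_l c_{kl}) \, \|\D_k u\|_p \leq \frac{c}{n} \, \|u\|_{W^{1,p}(\Ri^d)}
\]
with a constant $c$ depending only on $d$ and $\sup_{k,l} \|c_{kl}\|_{W^{2,\infty}(\Ri^d)}$.

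The two assertions are then immediate from this single estimate: it shows $\|T_n^{(1)}\|_{W^{1,p} \to L_p} \leq c$ for every $n$, so the sequence is bounded (in fact the operator norms tend to $0$), and for each fixed $u \in W^{1,p}(\Ri^d)$ we get $\|T_n^{(1)} u\|_p \leq \frac{c}{n} \|u\|_{W^{1,p}(\Ri^d)} \to 0$, so no separate density argument is even required. There is no serious obstacle here; the only points deserving care are the legitimacy of manipulating the defining integral (treating it as an $L_p$-valued Bochner integral, or equivalently applying Fubini to pass to the pointwise expression above) and the observation that translation is strongly continuous on $L_p$ but not on $L_\infty$ — which is exactly why the modulus of continuity must be extracted from the coefficients, where the $W^{2,\infty}$-assumption makes it available, instead of from $\D_k u$.
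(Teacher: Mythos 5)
Your proof is correct and reaches the same conclusion by essentially the same mechanism: the $W^{2,\infty}$-hypothesis gives $\D_l c_{kl}$ a Lipschitz modulus of continuity, so $(I-L_y)(\D_l c_{kl})$ is $O(|y|)$ in $L^\infty$, and the fact that $J_n$ has mass one and support in $B_{1/n}(0)$ converts this into an $O(1/n)$ bound on the operator norm. The only difference is bookkeeping: the paper applies Minkowski's integral inequality to push the $L_p$-norm inside the $y$-integral and then uses the $L^\infty$--$L^p$ H\"older split together with translation invariance $\|L_y(\D_k u)\|_p = \|\D_k u\|_p$, whereas you pull the modulus of continuity out pointwise (using the support condition on $J_n$) and then invoke Young's inequality $\|J_n * |\D_k u|\|_p \leq \|\D_k u\|_p$. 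These are two standard ways of organising the same estimate, and both yield the bound $\|T_n^{(1)}\|_{W^{1,p}\to L_p} \leq c/n$, which gives boundedness and convergence simultaneously without any density argument -- exactly as you observe and as the paper's final display also shows.
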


\begin{proof}
Let $n \in \Ni$ and $u \in W^{1,p}(\Ri^d)$.
For all $k,l \in \{1, \ldots, d\}$ we have $c_{kl} \in W^{2,\infty}(\Ri^d)$, which implies 
\begin{equation} \label{W c Lipschitz}
|(\D_l c_{kl})(x) - (\D_l c_{kl})(x-y)| \leq \|c_{kl}\|_{W^{2,\infty}} \, |y|
\end{equation}
for all $x, y \in \Ri^d$.
It follows that 
\begin{eqnarray*}
\|T_n^{(1)} u\|_p
& \leq & \sum_{k,l=1}^d \int_{\Ri^d} |J_n(y)| \,
	\Big\| \Big( (I-L_y) \, (\D_l c_{kl}) \Big) \,
	L_y(\D_k u) \Big\|_p \, dy
\\
& \leq & \sum_{k,l=1}^d \int_{\Ri^d} |J_n(y)| \,
	\| (I-L_y) \, (\D_l c_{kl}) \|_\infty \, 
	\| L_y (\D_k u) \|_p \, dy
\\
& \leq & \Big( \sum_{k,l=1}^d \|c_{kl}\|_{W^{2,\infty}} \Big) \, 
	\|u\|_{W^{1,p}} \, \int_{\Ri^d} |J_n(y)| \, |y| \, dy
\\
& = & \Big( \sum_{k,l=1}^d \|c_{kl}\|_{W^{2,\infty}} \Big) \, 
	\|u\|_{W^{1,p}} \, \frac{1}{n} \int_{\Ri^d} |J(y)| \, |y| \, dy,
\end{eqnarray*}
where we used $J_n(y) = n^d \, J(n \, y)$ in the last step.
Note that
\[
\lim_{n \to \infty} \frac{1}{n} \int_{\Ri^d} |J(y)| \, |y| \, dy = 0.
\]
Hence $\lim_{n \to \infty} \|T_n^{(1)} u\|_p = 0$.
Moreover, $\{T_n^{(1)}\}_{n \in \Ni}$ is bounded.
\end{proof}

\begin{lemm} \label{W Tn2}
The sequence $\{T_n^{(2)}\}_{n \in \Ni}$ is bounded.
Furthermore $\lim_{n \to \infty} \|T_n^{(2)} u\|_p = 0$ for all $u \in W^{1,p}(\Ri^d) \cap L_{p,c}(\Ri^d)$.
\end{lemm}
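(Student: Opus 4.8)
The plan is to use the Leibniz rule in the convolution variable $y$ to split $T_n^{(2)}$ into two pieces, prove that each is uniformly bounded, establish the $L_p$-decay on the dense subclass $C_c^\infty(\Ri^d)$ via an integration by parts in $y$, and then reach general $u$ by approximation.

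First I would note that, for each $k,l \in \{1,\ldots,d\}$,
\[
\frac{\D}{\D y_l} \, \Big( J_n(y) \, (I-L_y) \, c_{kl} \Big)
= (\D_l J_n)(y) \, \big( (I-L_y) \, c_{kl} \big) + J_n(y) \, L_y(\D_l c_{kl}),
\]
since $\frac{\D}{\D y_l} \big( (I-L_y) \, c_{kl} \big) = L_y(\D_l c_{kl})$. This decomposes $T_n^{(2)} = P_n + Q_n$, where $P_n$ carries the $(\D_l J_n)(y)$ term and $Q_n$ the $J_n(y) \, L_y(\D_l c_{kl})$ term. For uniform boundedness: in $P_n$ I would use $\|(I-L_y) \, c_{kl}\|_\infty \leq \|c_{kl}\|_{W^{2,\infty}} \, |y|$ together with the scaling identity $\int_{\Ri^d} |(\D_l J_n)(y)| \, |y| \, dy = \int_{\Ri^d} |(\D_l J)(z)| \, |z| \, dz$, which is independent of $n$; in $Q_n$ I would use $\|L_y(\D_l c_{kl})\|_\infty = \|\D_l c_{kl}\|_\infty$ and $\int_{\Ri^d} J_n = 1$. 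Both pieces are then bounded by $c \, \|u\|_{W^{1,p}}$ with $c$ independent of $n$, so $\{T_n^{(2)}\}_{n \in \Ni}$ is bounded.

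Next, for $u \in C_c^\infty(\Ri^d)$, I would integrate by parts in $y_l$ in the defining integral (the boundary terms vanish because $J_n$ is smooth with compact support) and use $\frac{\D}{\D y_l} \big( L_y(\D_k u) \big) = -L_y(\D_l \D_k u)$ to obtain
\[
T_n^{(2)} u = -\sum_{k,l=1}^d \int_{\Ri^d} J_n(y) \, \big( (I-L_y) \, c_{kl} \big) \, L_y(\D_l \D_k u) \, dy,
\]
whence $\|T_n^{(2)} u\|_p \leq \Big( \int_{\Ri^d} J_n(y) \, |y| \, dy \Big) \sum_{k,l=1}^d \|c_{kl}\|_{W^{2,\infty}} \, \|\D_l \D_k u\|_p \longrightarrow 0$, since $\int_{\Ri^d} J_n(y) \, |y| \, dy = O(1/n)$. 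Finally, for $u \in W^{1,p}(\Ri^d) \cap L_{p,c}(\Ri^d)$, I would take $u_m = J_m * u$, which lies in $C_c^\infty(\Ri^d)$ (it has compact support because $u$ does) and satisfies $u_m \to u$ in $W^{1,p}(\Ri^d)$; then, by the uniform bound, $\|T_n^{(2)} u\|_p \leq c \, \|u - u_m\|_{W^{1,p}} + \|T_n^{(2)} u_m\|_p$, and the usual argument (fix $m$ to make the first term small, then let $n \to \infty$) gives $\|T_n^{(2)} u\|_p \to 0$.

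The main obstacle is the decay step. A naive estimate of $T_n^{(2)} u$ through the splitting $P_n + Q_n$ does not give decay: as $n \to \infty$ the piece $Q_n u$ converges in $L_p$ to $-\sum_{k,l=1}^d (\D_l c_{kl})(\D_k u)$, a nonzero limit, which is cancelled only by the limit $+\sum_{k,l=1}^d (\D_l c_{kl})(\D_k u)$ of $P_n u$. It is the integration by parts in $y$ — legitimate only after passing to $C_c^\infty(\Ri^d)$, because it moves the $y$-derivative onto $L_y(\D_k u)$ and hence costs the extra regularity $u \in W^{2,p}$ — that exposes this cancellation and produces the decisive factor $|y| \leq 1/n$ coming from $(I-L_y) \, c_{kl}$.
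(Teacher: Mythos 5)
Your proof is correct. The uniform boundedness step matches the paper's argument exactly (Leibniz expansion of the $y_l$-derivative plus the Lipschitz bound $\|(I-L_y)c_{kl}\|_\infty \leq \|c_{kl}\|_{W^{2,\infty}}|y|$ and scaling of $J_n$), and the approximation step is the standard one the paper also uses. The interesting difference is in the decay step for $u \in C_c^\infty(\Ri^d)$. The paper exploits the identity $\int_{\Ri^d}\frac{\D}{\D y_l}\big(J_n(y)(I-L_y)c_{kl}\big)\,dy = 0$ to replace $L_y(\D_k u)$ by $(I-L_y)(\D_k u)$, and then estimates $\|(I-L_y)(\D_k u)\|_p \leq \frac{2}{n}|\supp u|^{1/p}\|u\|_{W^{2,\infty}}$, which forces it to invoke both $\|u\|_{W^{2,\infty}}$ and the compact support of $u$. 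You instead integrate by parts in $y_l$ to move the derivative onto $L_y(\D_k u)$, obtaining $T_n^{(2)}u = -\sum_{k,l}\int J_n(y)\big((I-L_y)c_{kl}\big)L_y(\D_l\D_k u)\,dy$, and then place the $O(1/n)$ smallness on $(I-L_y)c_{kl}$ while keeping $\|\D_l\D_k u\|_p$ as is. Both exploit the same cancellation between the $P_n$ and $Q_n$ pieces and both use the $|y|\lesssim 1/n$ support of $J_n$; yours has the modest advantage that the resulting estimate only needs $u\in W^{2,p}(\Ri^d)$ (no compact support, no $W^{2,\infty}$ norm), which makes the subsequent density argument a touch cleaner, though it does not change what is ultimately proved.
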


\begin{proof}
Let $n \in \Ni$ and $u \in W^{1,p}(\Ri^d)$.
Expanding $T_n^{(2)}$ gives
\[
T_n^{(2)} u
= - \sum_{k,l=1}^d \int_{\Ri^d}
	\Big( J_n(y) \, L_y(\D_l c_{kl}) 
	+ (\D_l J_n)(y) \, (I-L_y) \, c_{kl} \Big) \,
	L_y(\D_k u) \, dy,
\]
where we used $L_y(\D_l c_{kl}) = - \frac{\D}{\D y_l} (L_y  c_{kl})$ for all $k,l \in \{1, \ldots, d\}$.
Therefore

\pagebreak[1]

\begin{eqnarray}
\|T_n^{(2)} u\|_p
& \leq & \sum_{k,l=1}^d \left( \| (\D_l c_{kl}) \, (\D_k u) \|_p 
	+ \int_{\Ri^d} |(\D_l J_n)(y)| \, 
	\| ((I-L_y) \, c_{kl}) \, L_y(\D_k u) \|_p \, dy \right)
\nonumber
\\
& \leq & \sum_{k,l=1}^d \left( \|\D_l c_{kl}\|_\infty \, \|(\D_k u)\|_p 
	+ \int_{\Ri^d} |(\D_l J_n)(y)| \,
	\|(I-L_y) \, c_{kl}\|_\infty \, \|L_y(\D_k u)\|_p \, dy \right)
\nonumber
\\
& \leq & M \, \|u\|_{W^{1,p}},
\label{W T2 bounded}
\end{eqnarray}
where
\begin{equation} \label{M for T2}
M = \sum_{k,l=1}^d \left( \|c_{kl}\|_{W^{2,\infty}} \, 
	\Big( 1 + \int_{\Ri^d} |(\D_l J)(y)| \, |y| \, dy \Big) \right)
\end{equation}
and we used \eqref{W c Lipschitz} in the last step.
Therefore $\{T_n^{(2)}\}_{n \in \Ni}$ is bounded.

To prove to the latter statement of the lemma, we consider two cases.
\\
{\bf Case 1:} Suppose $u \in C_c^\infty(\Ri^d)$.
\\
Since $J_n$ has a compact support, we have
\[
\sum_{k,l=1}^d \int_{\Ri^d} \left( \frac{\D}{\D y_l} \, 
\Big( J_n(y) \, (I-L_y) \, c_{kl} \Big) \right) \, (\D_k u) \, dy = 0.
\]
Consequently
\begin{eqnarray*}
T_n^{(2)} u
& = & \sum_{k,l=1}^d \int_{\Ri^d} \left( \frac{\D}{\D y_l} \, 
	\Big( J_n(y) \, (I-L_y) \, c_{kl} \Big) \right) \, 
	(I-L_y) \, (\D_k u) \, dy
\\
& = & \sum_{k,l=1}^d \int_{\Ri^d}
	\Big( J_n(y) \, L_y(\D_l c_{kl}) 
	+ (\D_l J_n)(y) \, (I-L_y) \, c_{kl} \Big) \,
	(I-L_y) \, (\D_k u) \, dy.
\end{eqnarray*}
It follows that
\[
\|T_n^{(2)} u\|_p
\leq \sum_{k,l=1}^d \int_{\Ri^d} \Big(
	J_n(y) \, \|L_y (\D_l c_{kl})\|_\infty 
	+ |(\D_l J_n)(y)| \, \|(I-L_y) \, c_{kl}\|_\infty \Big) \, 
	\|(I-L_y) (\D_k u)\|_p \, dy.
\]
Note that
\begin{eqnarray*}
\|(I-L_y) (\D_k u)\|_p 
& = & \Big( \int_{\Ri^d} |(\D_k u)(x) - (\D_k u)(x-y)|^p \, dx \Big)^\frac{1}{p}
\\
& \leq & \Big( \int_{\Ri^d} (\|u\|_{W^{2,\infty}} \, |y|)^p \, 
	\one_{\supp \D_k u \, \cup \, \supp L_y(\D_k u)} \, dx \Big)^\frac{1}{p}
\\
& \leq & 2 \, |\supp \D_k u|^{1/p} \, \|u\|_{W^{2,\infty}} \, |y|
	\leq \frac{2}{n} \, |\supp u|^{1/p} \, \|u\|_{W^{2,\infty}}
\end{eqnarray*}
for all $k \in \{1, \ldots, d\}$ and $y \in \Ri^d$ such that $|y| < \frac{1}{n}$, where in the last step we used the fact that $\supp \D_k u \subset \supp u$ for all $k \in \{1, \ldots, d\}$.
Therefore
\begin{equation} \label{W T2 bounded 2}
\|T_n^{(2)} u\|_p
\leq \frac{2M}{n} \, |\supp u|^{1/p} \, \|u\|_{W^{2,\infty}},
\end{equation}
where $M$ is defined by \eqref{M for T2} and we used the fact that $\int_{\Ri^d} J_n(y) \, dy = 1$.
Hence \eqref{W T2 bounded 2} gives $\lim_{n \to \infty} \|T_n^{(2)} u\|_p = 0$.
\\
{\bf Case 2:} Suppose $u \in W^{1,p}(\Ri^d) \cap L_{p,c}(\Ri^d)$.
\\
Let $\varepsilon > 0$. 
Let $v \in C_c^\infty(\Ri^d)$ be such that $\|u-v\|_{W^{1,p}} < \frac{\varepsilon}{2M}$.
Choose an $n \in \Ni$ such that $\frac{2M}{n} \, |\supp v|^{1/p} \, \|v\|_{W^{2,\infty}} < \frac{\varepsilon}{2}$.
Then it follows from \eqref{W T2 bounded} and \eqref{W T2 bounded 2} that
\[
\|T_n^{(2)} u\|_p 
\leq \|T_n^{(2)} (u-v)\|_p + \|T_n^{(2)} v\|_p 
\leq M \, \|u-v\|_{W^{1,p}} + \frac{2M}{n} \, |\supp v|^{1/p} \, \|v\|_{W^{2,\infty}}
< \varepsilon.
\]
The proof is complete.
\end{proof}

\begin{lemm} \label{W Tn}
The sequence $\{T_n\}_{n \in \Ni}$ is bounded.
Furthermore $\lim_{n \to \infty} \|T_n u\|_p = 0$ for all $u \in W^{1,p}(\Ri^d) \cap L_{p,c}(\Ri^d)$.
\end{lemm}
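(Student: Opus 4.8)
The statement is the immediate synthesis of Lemmas \ref{W Tn1} and \ref{W Tn2} via the decomposition \eqref{W Tn definition}, so the plan is simply to assemble the two pieces. First I would recall that for each $n \in \Ni$ one has $T_n = T_n^{(1)} + T_n^{(2)}$ as operators from $W^{1,p}(\Ri^d)$ to $L_p(\Ri^d)$. By Lemma \ref{W Tn1} the sequence $\{T_n^{(1)}\}_{n\in\Ni}$ is bounded, say $\sup_n \|T_n^{(1)}\| \le M_1$, and by Lemma \ref{W Tn2} the sequence $\{T_n^{(2)}\}_{n\in\Ni}$ is bounded, say $\sup_n \|T_n^{(2)}\| \le M_2$. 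Then for every $u \in W^{1,p}(\Ri^d)$ and every $n \in \Ni$ the triangle inequality gives
\[
\|T_n u\|_p \le \|T_n^{(1)} u\|_p + \|T_n^{(2)} u\|_p \le (M_1 + M_2) \, \|u\|_{W^{1,p}},
\]
so $\{T_n\}_{n\in\Ni}$ is bounded.

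For the convergence assertion, I would fix $u \in W^{1,p}(\Ri^d) \cap L_{p,c}(\Ri^d)$. Since $u \in W^{1,p}(\Ri^d)$, Lemma \ref{W Tn1} yields $\lim_{n\to\infty}\|T_n^{(1)} u\|_p = 0$; since $u \in W^{1,p}(\Ri^d) \cap L_{p,c}(\Ri^d)$, Lemma \ref{W Tn2} yields $\lim_{n\to\infty}\|T_n^{(2)} u\|_p = 0$. Combining these with the estimate $\|T_n u\|_p \le \|T_n^{(1)} u\|_p + \|T_n^{(2)} u\|_p$ and letting $n \to \infty$ gives $\lim_{n\to\infty}\|T_n u\|_p = 0$, as required.

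There is no genuine obstacle here: the analytic work — the Lipschitz bound \eqref{W c Lipschitz} on $\D_l c_{kl}$, the scaling $J_n(y) = n^d J(ny)$ that produces the decaying factor $\tfrac1n$, and the density argument passing from $C_c^\infty(\Ri^d)$ to $W^{1,p}(\Ri^d) \cap L_{p,c}(\Ri^d)$ in Case 2 of Lemma \ref{W Tn2} — has already been carried out. The only point worth flagging is bookkeeping on domains: one must note that the compact-support hypothesis is needed \emph{only} for the $T_n^{(2)}$ term (Lemma \ref{W Tn1} holds on all of $W^{1,p}(\Ri^d)$), which is why the conclusion for $T_n$ is stated on $W^{1,p}(\Ri^d) \cap L_{p,c}(\Ri^d)$ rather than on all of $W^{1,p}(\Ri^d)$.
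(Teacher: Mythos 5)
Your proof is correct and follows exactly the same route as the paper's, which simply observes that the lemma is an immediate consequence of Lemmas \ref{W Tn1} and \ref{W Tn2} via the decomposition \eqref{W Tn definition}. You have merely made the triangle-inequality bookkeeping explicit, and your remark about why compact support is needed only for the $T_n^{(2)}$ piece is an accurate reading of the hypotheses.
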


\begin{proof}
This is a consequence of Lemmas \ref{W Tn1} and \ref{W Tn2}.
\end{proof}

We have the following approximation proposition (cf.\ \cite{Fri} and \cite{Kat8} for a special case of the proposition when the coefficient $c_{kl}$ are real-valued for all $k,l \in \{1, \ldots, d\}$).

\begin{prop} \label{W smoothen}
Let $u \in D(B_p) \cap W^{1,p}(\Ri^d) \cap L_{p,c}(\Ri^d)$.
Then $\lim_{n \to \infty} B_p(J_n * u) = B_p u$ in $L_p(\Ri^d)$.
\end{prop}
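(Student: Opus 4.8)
The plan is to identify the difference $B_p(J_n * u) - J_n * (B_p u)$ with the operator $T_n$ introduced above and then invoke Lemma~\ref{W Tn}, after which the convergence $J_n * (B_p u) \to B_p u$ in $L_p(\Ri^d)$ finishes the argument.

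First I would note that since $u \in L_{p,c}(\Ri^d)$, the mollification $J_n * u$ lies in $C_c^\infty(\Ri^d) \subset W^{2,p}(\Ri^d) \subset D(B_p)$, so that
\[
B_p(J_n * u) = - \sum_{k,l=1}^d \D_l\big(c_{kl} \, \D_k(J_n * u)\big) = - \sum_{k,l=1}^d \D_l\big(c_{kl} \, (J_n * \D_k u)\big),
\]
where I used $\D_k(J_n * u) = J_n * \D_k u$, valid since $u \in W^{1,p}(\Ri^d)$. Next, unwinding the definition $B_p = (H_q)^*$ in \eqref{W Hq} and integrating by parts once — legitimate because $u \in W^{1,p}(\Ri^d)$ and $c_{kl} \in W^{2,\infty}(\Ri^d)$ — one sees that $B_p u = - \sum_{k,l} \D_l(c_{kl} \, \D_k u)$ holds in $\mathcal{D}'(\Ri^d)$, hence (as $u \in D(B_p)$) almost everywhere; since convolution commutes with differentiation this yields $J_n * (B_p u) = - \sum_{k,l} \D_l\big(J_n * (c_{kl} \, \D_k u)\big)$.

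The main step is the commutator computation. Subtracting the two identities and writing, for each $x \in \Ri^d$,
\[
c_{kl}(x)\,(J_n * \D_k u)(x) - \big(J_n * (c_{kl}\,\D_k u)\big)(x) = \int_{\Ri^d} J_n(y)\,\big((I - L_y)\,c_{kl}\big)(x)\,(L_y \D_k u)(x)\,dy,
\]
I would apply $\D_l$ under the integral sign and split it by the product rule into the term in which $\D_l$ falls on $(I-L_y)c_{kl}$ — where $\D_l^x (I - L_y)c_{kl} = (I-L_y)\D_l c_{kl}$ reproduces the integrand of $T_n^{(1)}$ — and the term in which $\D_l$ falls on $L_y \D_k u$. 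Since only $u \in W^{1,p}(\Ri^d)$ is available, the latter must not be left as a second derivative of $u$; instead I would use $\D_l^x (L_y \D_k u) = - \frac{\D}{\D y_l}(L_y \D_k u)$ and integrate by parts in the variable $y$, the boundary terms vanishing because $J_n$ has compact support, which produces precisely the $\frac{\D}{\D y_l}\big(J_n(y)(I-L_y)c_{kl}\big)$ factor appearing in $T_n^{(2)}$. Collecting the two pieces gives
\[
B_p(J_n * u) - J_n * (B_p u) = T_n^{(1)} u + T_n^{(2)} u = T_n u.
\]
Finally, since $u \in W^{1,p}(\Ri^d) \cap L_{p,c}(\Ri^d)$, Lemma~\ref{W Tn} yields $\|T_n u\|_p \to 0$, and the standard property of mollifiers gives $J_n * (B_p u) \to B_p u$ in $L_p(\Ri^d)$ because $B_p u \in L_p(\Ri^d)$; adding these two facts proves the proposition.

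I expect the delicate point to be the bookkeeping in this commutator identity: one must order the differentiation and the $y$-integration by parts so that no second-order derivative of $u$ ever appears, and then check that the two resulting pieces coincide exactly with the definitions \eqref{W Tn definition} of $T_n^{(1)}$ and $T_n^{(2)}$ (in particular the interchange of $\D_l^x$ with the $y$-integral, and the identity $\D_l^x (L_y \D_k u) = - \frac{\D}{\D y_l}(L_y \D_k u)$).
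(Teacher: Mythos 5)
Your proposal is correct and follows essentially the same route as the paper: both reduce the statement to the identity $B_p(J_n*u) - J_n*(B_p u) = T_n u$ and then invoke Lemma~\ref{W Tn} together with the standard fact $J_n*(B_p u)\to B_p u$ in $L_p$. The only cosmetic difference is how you obtain the expression for $J_n*(B_p u)$: you first upgrade the duality relation $(B_p u,\phi)=(u,H_q\phi)$, via one integration by parts, to the a.e.\ identity $B_p u=-\sum_{k,l}\D_l(c_{kl}\,\D_k u)$ and then convolve, whereas the paper computes $(J_n*(B_p u))(x)$ directly by testing against $\phi=J_n(x-\cdot)$; after that, your commutator computation with the product rule and the integration by parts in $y$ (using the compact support of $J_n$) is precisely the bookkeeping the paper carries out to identify $T_n^{(1)}$ and $T_n^{(2)}$.
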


\begin{proof}
Let $u \in D(B_p) \cap W^{1,p}(\Ri^d) \cap L_{p,c}(\Ri^d)$.
It is well-known that $\lim_{n \to \infty} J_n*(B_p u) = B_p u$ in $L_p(\Ri^d)$.
Therefore it suffices to show that 
\[
\lim_{n \to \infty} \|B_p(J_n*u) - J_n*(B_p u)\|_p = 0.
\]
In what follows note that $L_y (\D_l u) = - \frac{\D}{\D_l} (L_y u)$ and $\D_l (J_n * u) = (\D_l J_n)* u$ for all $l \in \{1, \ldots, d\}$.
We first calculate $J_n*(B_p u)$.
Let $x \in \Ri^d$.
Define $\phi(y) = J_n(x-y)$ for all $y \in \Ri^d$.
Then $\phi \in C_c^\infty(\Ri^d)$.
By the definition of $B_p$ we have
\begin{eqnarray*}
(J_n*(B_p u))(x)
& = & \int_{\Ri^d} J_n(x-y) \, (B_p u)(y) \, dy
	= (B_p u, \phi) = (u, H_q \phi)
\\
& = & - \sum_{k,l=1}^d \int_{\Ri^d} 
	\Big( \frac{\D}{\D y_k} \big( c_{kl}(y) \, \frac{\D}{\D y_l} J_n(x-y) \big) \Big) \, u(y) \, dy
\\
& = & \sum_{k,l=1}^d \int_{\Ri^d} \Big( c_{kl}(y) \, \frac{\D}{\D y_l} J_n(x-y) \Big) \, (\D_k u)(y) \, dy
\\
& = & -\sum_{k,l=1}^d \int_{\Ri^d} (\D_l J_n)(x-y) \, (c_{kl} \, \D_k u)(y) \, dy
\\
& = & -\sum_{k,l=1}^d \int_{\Ri^d} (\D_l J_n)(y) \, (c_{kl} \, \D_k u)(x-y) \, dy
\end{eqnarray*}
for all $n \in \Ni$.
Hence
\[
J_n*(B_p u) = -\sum_{k,l=1}^d \int_{\Ri^d} (\D_l J_n)(y) \, L_y (c_{kl} \, \D_k u) \, dy
\]
for all $n \in \Ni$.

Let $n \in \Ni$.
We have
\begin{eqnarray*}
& & B_p(J_n*u) - J_n*(B_p u)
\\
& = & - \sum_{k,l=1}^d \left( \D_l \Big( c_{kl} \int_{\Ri^d} J_n(y) \, L_y (\D_k u) \, dy \Big) 
	- \int_{\Ri^d} (\D_l J_n)(y) \, L_y (c_{kl} \, \D_k u) \, dy \right)
\\
& = & - \sum_{k,l=1}^d \left( (\D_l c_{kl}) \int_{\Ri^d} J_n(y) \, L_y (\D_k u) \, dy
	+ c_{kl} \, \D_l \Big( \int_{\Ri^d} J_n(y) \, L_y(\D_k u) \, dy \Big) \right.
	\\
	& & \phantom{- \sum_{k,l=1}^d} - \int_{\Ri^d} (\D_l J_n)(y) \, L_y (c_{kl} \, \D_k u) \, dy 
\\
& = & - \sum_{k,l=1}^d \left( (\D_l c_{kl}) \int_{\Ri^d} J_n(y) \, L_y (\D_k u) \, dy
	+ c_{kl} \int_{\Ri^d} (\D_l J_n)(y) \, L_y(\D_k u) \, dy \right.
	\\
	& & \phantom{- \sum_{k,l=1}^d} - \left. \int_{\Ri^d} (\D_l J_n)(y) \, 
	L_y (c_{kl} \, \D_k u) \, dy \right).
\end{eqnarray*}
On the other hand expanding $T_n^{(1)}$ and $T_n^{(2)}$ gives
\[
T_n^{(1)} u 
= - \sum_{k,l=1}^d \int_{\Ri^d} \Big( 
	J_n(y) \, (\D_l c_{kl}) \, L_y(\D_k u)
	- J_n(y) \, L_y \big( (\D_l c_{kl}) \, \D_k u \big) \Big) \, dy
\]
and
\begin{eqnarray*}
T_n^{(2)} u 
& = & - \sum_{k,l=1}^d \int_{\Ri^d}
	\Big( J_n(y) \, L_y(\D_l c_{kl}) 
	+ (\D_l J_n)(y) \, (I-L_y) \, c_{kl} \Big) \,
	L_y(\D_k u) \, dy
\\
& = & - \sum_{k,l=1}^d \int_{\Ri^d}
	\Big( J_n(y) \, L_y \big( (\D_l c_{kl}) \, \D_k u \big) 
	+ (\D_l J_n)(y) \, c_{kl} \, L_y(\D_k u) 
	\\
	& & \phantom{- \sum_{k,l=1}^d \int_{\Ri^d}}
	- (\D_l J_n)(y) \, L_y(c_{kl} \, \D_k u) \Big) \, dy.
\end{eqnarray*}
Therefore
\begin{eqnarray*}
T_n u 
= T_n^{(1)} u + T_n^{(2)} u 
& = & - \sum_{k,l=1}^d \left( (\D_l c_{kl}) \int_{\Ri^d} J_n(y) \, L_y (\D_k u) \, dy
	+ c_{kl} \int_{\Ri^d} (\D_l J_n)(y) \, L_y(\D_k u) \, dy \right.
	\nonumber
	\\
	& & \phantom{- \sum_{k,l=1}^d} - \left. \int_{\Ri^d} (\D_l J_n)(y) \, 
	L_y (c_{kl} \, \D_k u) \, dy \right).
\end{eqnarray*}
Hence
\begin{equation} \label{W mollifying difference}
B_p(J_n*u) - J_n*(B_p u) = T_n u.
\end{equation}
The claim now follows from Lemma \ref{W Tn}.
\end{proof}

Let $\tau \in C_c^\infty(\Ri^d)$ be such that $0 \leq \tau \leq \one$, $\tau|_{B_1(0)} = 1$ and $\supp \tau \subset B_2(0)$.
Define $\tau_n(x) = \tau(n^{-1} \, x)$ for all $x \in \Ri^d$ and $n \in \Ni$.

\begin{lemm} \label{W cut-off}
Let $u \in D(B_p) \cap W^{1,p}(\Ri^d)$.
Then $\tau_n \, u \in D(B_p)$ for all $n \in \Ni$ and we have $\lim_{n \to \infty} \tau_n \, u = u$ in $D(B_p)$.
If $u$ satisfies further that $u \in W^{2,p}(\Ri^d)$ and $\nabla (B_p u) \in (L_p(\Ri^d))^d$, then $\nabla (B_p (\tau_n \, u)) \in (L_p(\Ri^d))^d$ and $\lim_{n \to \infty} \nabla (B_p (\tau_n \, u)) = \nabla (B_p u)$ in $(L_p(\Ri^d))^d$.
\end{lemm}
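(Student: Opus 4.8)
The plan is to reduce the entire lemma to a single commutator identity: for every $u \in D(B_p) \cap W^{1,p}(\Ri^d)$ and every $n \in \Ni$,
\[
B_p(\tau_n\, u) = \tau_n\, B_p u - \sum_{k,l=1}^d (\D_l \tau_n)\, c_{kl}\, (\D_k u) - \sum_{k,l=1}^d \D_l\!\big( c_{kl}\, (\D_k \tau_n)\, u \big),
\]
which in particular already encodes the assertion $\tau_n u \in D(B_p)$. To establish it I would fix $\phi \in C_c^\infty(\Ri^d)$ and apply the Leibniz rule to $\tau_n\, H_q\phi$, rewriting it as $H_q(\tau_n \phi)$ plus the two first-order expressions $\sum_{k,l}\D_k(\overline{c_{kl}}\,(\D_l\tau_n)\,\phi)$ and $\sum_{k,l}(\D_k\tau_n)\,\overline{c_{kl}}\,(\D_l\phi)$. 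Pairing with $u$: since $\tau_n\phi \in C_c^\infty(\Ri^d) = D(H_q)$ and $u \in D(B_p)$, the first piece equals $(B_p u,\tau_n\phi) = (\tau_n\, B_p u,\phi)$, while the two remaining pieces are integrals against $u$ of first-order differential expressions in $\phi$ with compactly supported $W^{1,\infty}$-coefficients, so the hypothesis $u \in W^{1,p}(\Ri^d)$ permits an integration by parts that identifies them with $(G_n,\phi)$, where $G_n \in L_p(\Ri^d)$ is exactly the sum of the last two terms above. One can cross-check the resulting identity against the elementary product-rule computation valid when $u \in C_c^\infty(\Ri^d)$.

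Granting the identity, $\tau_n u \to u$ in $D(B_p)$ is routine: $\tau_n u \to u$ and $\tau_n\, B_p u \to B_p u$ in $L_p(\Ri^d)$ by dominated convergence, while the two error terms tend to $0$ in $L_p(\Ri^d)$. For the latter I would use that $\tau_n(x) = \tau(n^{-1}x)$, so $\|\D^\alpha\tau_n\|_\infty \le n^{-|\alpha|}\|\D^\alpha\tau\|_\infty$ for $|\alpha| \ge 1$; after expanding $\D_l(c_{kl}(\D_k\tau_n)u)$ by Leibniz, every resulting summand carries a factor $\D^\alpha\tau_n$ with $1 \le |\alpha| \le 2$ multiplied by a function whose $L_p$-norm is controlled by $\|c_{kl}\|_{W^{2,\infty}}$ and $\|u\|_{W^{1,p}}$, and the term $(\D_l\tau_n)\,c_{kl}\,(\D_k u)$ is treated the same way. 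Hence $B_p(\tau_n u) \to B_p u$ in $L_p(\Ri^d)$.

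For the second assertion, assume in addition $u \in W^{2,p}(\Ri^d)$ and $\nabla(B_p u) \in (L_p(\Ri^d))^d$, i.e.\ $B_p u \in W^{1,p}(\Ri^d)$. Then every summand on the right-hand side of the commutator identity is a product of functions lying in $W^{1,p}(\Ri^d)$ or $W^{2,p}(\Ri^d)$ with smooth bounded functions — for instance $c_{kl}(\D_k\tau_n)$ is compactly supported and in $W^{2,\infty}(\Ri^d)$, so $c_{kl}(\D_k\tau_n)u \in W^{2,p}(\Ri^d)$ and $\D_l(c_{kl}(\D_k\tau_n)u) \in W^{1,p}(\Ri^d)$ — so $B_p(\tau_n u) \in W^{1,p}(\Ri^d)$, i.e.\ $\nabla(B_p(\tau_n u)) \in (L_p(\Ri^d))^d$. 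Differentiating the identity, $\nabla(\tau_n\, B_p u) = (\nabla\tau_n)\, B_p u + \tau_n\, \nabla(B_p u)$, with first summand $O(n^{-1})$ in $L_p(\Ri^d)$ and second converging to $\nabla(B_p u)$ by dominated convergence; and in the gradients of the two error terms every summand again picks up a derivative of $\tau_n$ of order at least $1$, hence an $O(n^{-1})$ prefactor times a function bounded in $L_p(\Ri^d)$ (here $u \in W^{2,p}(\Ri^d)$ is used), so they vanish. This gives $\nabla(B_p(\tau_n u)) \to \nabla(B_p u)$ in $(L_p(\Ri^d))^d$.

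The hard part is really only the first step — producing the commutator identity, i.e.\ verifying $\tau_n u \in D(B_p)$ and computing $B_p(\tau_n u)$ precisely while $u$ is merely in $W^{1,p}(\Ri^d)$, and keeping careful track of the many Leibniz terms — after which both limit statements collapse to the estimate $\|\D^\alpha\tau_n\|_\infty = O(n^{-|\alpha|})$ together with dominated convergence.
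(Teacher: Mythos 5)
Your proposal is correct and follows essentially the same route as the paper: you derive the same commutator identity for $B_p(\tau_n u)$ (after expanding $\D_l(c_{kl}(\D_k\tau_n)u)$, your expression coincides term-by-term with the paper's formula $v = \tau_n B_p u + (B_p\tau_n)u - \sum c_{kl}(\D_k u)(\D_l\tau_n) - \sum c_{kl}(\D_l u)(\D_k\tau_n)$), and both convergence claims are then handled exactly as in the paper by the scaling $\|\D^\alpha\tau_n\|_\infty = O(n^{-|\alpha|})$ for $|\alpha|\ge 1$ together with dominated convergence. The only cosmetic difference is that you move the Leibniz rule to $\tau_n H_q\phi$ before pairing with $u$, whereas the paper pairs first and then integrates by parts; the computation is the same.
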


\begin{proof}
Let $n \in \Ni$ and $\phi \in C_c^\infty(\Ri^d)$.
Then 
\[
(\tau_n \, u, H_q \phi) = (v, \phi),
\]
where 
\begin{equation} \label{W v}
v = \tau_n \, (B_p u) + (B_p \tau_n) \, u 
	- \sum_{k,l=1}^d c_{kl} \, (\D_k u) \, (\D_l \tau_n)
	- \sum_{k,l=1}^d c_{kl} \, (\D_l u) \, (\D_k \tau_n).
\end{equation}
It follows that 
\[
\|v\|_p \leq M_1 \, \|u\|_{W^{1,p}} + \|B_p u\|_p < \infty,
\]
where $M_1 = 3 \, \sup \{\|c_{kl} \ \tau\|_{W^{2,\infty}}: 1 \leq k,l \leq d\}$.
Therefore $\tau_n \, u \in D(B_p)$ and $B_p(\tau_n \, u) = v$.

Next we consider the expression for $v$ in \eqref{W v}.
For the first term we have $\|\tau_n \, (B_p u)\|_p \leq \|B_p u\|$ for all $n \in \Ni$ and $\{\tau_n \, (B_p u)\}_{n \in \Ni}$ converges to $B_p u$ pointwise.
As a consequence $\lim_{n \to \infty} \tau_n \, (B_p u) = B_p u$ in $L_p(\Ri^d)$ by the Lebesgue dominated convergence theorem.
For the second term we notice that 
\begin{equation} \label{W tau n 1st and 2nd derivatives}
(\D_k \tau_n)(x) = \frac{1}{n} (\D_k \tau)(n^{-1} \, x)
\quad
\mbox{and}
\quad
(\D_l \D_k \tau_n)(x) = \frac{1}{n^2} (\D_l \D_k \tau)(n^{-1} \, x)
\end{equation}
for all $x \in \Ri^d$, $n \in \Ni$ and $k,l \in \{1, \ldots, d\}$.
Since $c_{kl} \in W^{2,\infty}(\Ri^d)$ for all $k,l \in \{1, \ldots, d\}$, we obtain 
\begin{equation} \label{W negligible terms}
\|(B_p \tau_n) \, u\|_p 
= \Big\| \Big( \sum_{k,l=1}^d (\D_l c_{kl}) \, (\D_k \tau_n) 
	+ c_{kl} \, (\D_l \D_k \tau_n) \Big) \, u \Big\|_p
\leq \frac{2 d^2}{n} \, \|c_{kl}\|_{W^{2,\infty}} \, 
	\|\tau\|_{W^{2,\infty}} \, \|u\|_p
\end{equation}
for all $n \in \Ni$.
It follows that $\lim_{n \to \infty} \|(B_p \tau_n) \, u\|_p = 0$.
Similarly the last two terms also converge to 0 in $L_p(\Ri^d)$.
Clearly $\lim_{n \to \infty} \tau_n \, u = u$ in $L_p(\Ri^d)$.
Hence $\lim_{n \to \infty} \tau_n \, u = u$ in $D(B_p)$.

To prove the second statement let $j \in \{1, \ldots, d\}$ and $n \in \Ni$.
Using \eqref{W v} we have
\begin{eqnarray}
\D_j (B_p(\tau_n \, u))
& = & \tau_n \, \D_j (B_p u) + (\D_j \tau_n) \, (B_p u) + (B_p \tau_n) \, (\D_j u) + (\D_j (B_p \tau_n)) \, u
\nonumber
\\
& & - \sum_{k,l=1}^d (\D_j c_{kl}) \, (\D_k u) \, (\D_l \tau_n) 
		+ c_{kl} \, (\D_j \D_k u) \, (\D_l \tau_n) + c_{kl} \, (\D_k u) \, (\D_j \D_l \tau_n)
\nonumber
\\
& & - \sum_{k,l=1}^d (\D_j c_{kl}) \, (\D_l u) \, (\D_k \tau_n)
		+ c_{kl} \, (\D_j \D_l u) \, (\D_k \tau_n) + c_{kl} \, (\D_l u) \, (\D_j \D_k \tau_n).
\hspace*{5mm}
\label{W DjBptaunu expression}
\end{eqnarray}
It follows that
\[
\|\D_j (B_p(\tau_n \, u))\|_p
\leq M_2 \, \|u\|_{W^{2,p}} + (1 \wedge \|\tau\|_{W^{1,\infty}}) \, \|B_p u\|_{W^{1,p}} < \infty,
\]
where $M_2 = 8 \, \sup \{\|c_{kl}\|_{W^{2,\infty}} \, \|\tau\|_{W^{3,\infty}} : 1 \leq k,l \leq d\}$.
Therefore $\D_j (B_p(\tau_n \, u)) \in L_p(\Ri^d)$.
Furthermore notice that 
\begin{equation} \label{W tau n 3rd derivatives}
(\D_j \D_l \D_k \tau_n)(x) = \frac{1}{n^3} (\D_j \D_l \D_k \tau)(n^{-1} \, x)
\end{equation}
for all $x \in \Ri^d$ and $k,l \in \{1, \ldots, d\}$.
Using \eqref{W tau n 1st and 2nd derivatives}, \eqref{W tau n 3rd derivatives} and repeating the arguments used in \eqref{W negligible terms} we see that all terms in the expression for $\D_j (B_p(\tau_n \, u))$ in \eqref{W DjBptaunu expression} converge to 0 in $L_p(\Ri^d)$ as $n$ tends to infinity except for the first one, whereas the first term converges to $\D_j (B_p u)$ in $L_p(\Ri^d)$ as $n$ tends to infinity.
Hence $\lim_{n \to \infty} \D_j (B_p(\tau_n \, u)) = \D_j (B_p u)$ in $L_p(\Ri^d)$.
This completes the proof.
\end{proof}

\begin{prop} \label{W Cc dense 1}
The space $C_c^\infty(\Ri^d)$ is dense in $(D(B_p) \cap W^{1,p}(\Ri^d), \|\cdot\|_{D(B_p)})$.
\end{prop}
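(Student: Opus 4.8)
The plan is to obtain the density by chaining the two approximation results already proved, namely the cut-off Lemma~\ref{W cut-off} and the mollification Proposition~\ref{W smoothen}, in a two-stage argument measured in the graph norm $\|\cdot\|_{D(B_p)}$.

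First I would take an arbitrary $u \in D(B_p) \cap W^{1,p}(\Ri^d)$ and fix $\varepsilon > 0$. By Lemma~\ref{W cut-off} the truncations $\tau_n \, u$ lie in $D(B_p)$ for every $n \in \Ni$ and satisfy $\lim_{n \to \infty} \tau_n \, u = u$ in $D(B_p)$. Moreover, since $\tau_n \in C_c^\infty(\Ri^d)$ and $u \in W^{1,p}(\Ri^d)$, the Leibniz rule $\nabla(\tau_n \, u) = \tau_n \, \nabla u + u \, \nabla \tau_n$ shows that each $\tau_n \, u$ belongs to $W^{1,p}(\Ri^d)$, and it obviously has compact support, so $\tau_n \, u \in W^{1,p}(\Ri^d) \cap L_{p,c}(\Ri^d)$. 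Hence I may choose $n$ with $\|\tau_n \, u - u\|_{D(B_p)} < \varepsilon/2$, and the function $w = \tau_n \, u$ then meets exactly the hypotheses of Proposition~\ref{W smoothen}.

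Second, I would mollify $w$. By Proposition~\ref{W smoothen} we have $\lim_{m \to \infty} B_p(J_m * w) = B_p w$ in $L_p(\Ri^d)$, and $\lim_{m \to \infty} J_m * w = w$ in $L_p(\Ri^d)$ by the standard property of mollifiers; together these give $\lim_{m \to \infty} J_m * w = w$ in $D(B_p)$. Since $w$ has compact support, $J_m * w \in C_c^\infty(\Ri^d)$ for every $m$. Therefore I can pick $m$ with $\|J_m * w - w\|_{D(B_p)} < \varepsilon/2$, and the triangle inequality yields $\|J_m * w - u\|_{D(B_p)} < \varepsilon$ with $J_m * w \in C_c^\infty(\Ri^d)$, which is the asserted density.

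Since the substantive analytic content is already contained in Lemma~\ref{W cut-off} and Proposition~\ref{W smoothen}, the proof is essentially bookkeeping; the only point deserving a moment's care is confirming that the cut-off step keeps the function inside $W^{1,p}(\Ri^d) \cap L_{p,c}(\Ri^d)$ so that Proposition~\ref{W smoothen} applies, and I expect no serious obstacle there.
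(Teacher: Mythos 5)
Your proof is correct and follows essentially the same route as the paper: truncate with $\tau_n$ via Lemma~\ref{W cut-off} to land in $D(B_p)\cap W^{1,p}(\Ri^d)\cap L_{p,c}(\Ri^d)$, then mollify via Proposition~\ref{W smoothen}, chaining the two by the triangle inequality in the graph norm. The only addition you make is spelling out the membership $\tau_n u\in W^{1,p}(\Ri^d)\cap L_{p,c}(\Ri^d)$, which the paper leaves implicit.
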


\begin{proof}
Let $u \in D(B_p) \cap W^{1,p}(\Ri^d)$ and $\varepsilon > 0$.
For all $n \in \Ni$ set $u_n = \tau_n \, u \in D(B_p) \cap W^{1,p}(\Ri^d) \cap L_{p,c}(\Ri^d)$.
By Lemma \ref{W cut-off} we can choose an $n \in \Ni$ such that $\|u - u_n\|_{D(B_p)} < \frac{\varepsilon}{2}$.
Next for all $m \in \Ni$ set $v_m = J_m * (\tau_n \, u) \in C_c^\infty(\Ri^d)$.
We now use Lemma \ref{W smoothen} to choose an $m \in \Ni$ such that $\|u_n - v_m\|_{D(B_p)} < \frac{\varepsilon}{2}$.
Then
\[
\|u - v_m\|_{D(B_p)}
\leq \|u - u_n\|_{D(B_p)} 
	+ \|u_n - v_m\|_{D(B_p)} < \varepsilon.
\]
This verifies the claim.
\end{proof}

\begin{prop} \label{W 2nd ineq}
Suppose $|1 - \frac{2}{p}| < \cos\theta$ and $B_a = 0$.
Then there exists an $M > 0$ such that 
\[
\R (\nabla (B_p u), |\nabla u|^{p-2} \, \nabla u) \geq - M \, \|\nabla u\|_p^p
\]
for all $u \in W^{2,p}(\Ri^d)$ such that $\nabla(B_p u) \in (L_p(\Ri^d))^d$.
\end{prop}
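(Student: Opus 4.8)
The plan is to reduce the estimate to a sharp lower bound for the main term $P:=\sum_{j=1}^d \R\big(B_p(\D_j u), |\nabla u|^{p-2}\,\D_j u\big)$ — a vector–valued analogue of the computation underlying Proposition \ref{W 1st ineq} — after isolating the lower–order pieces. Concretely, I would first record the pointwise identity, valid in $L_p(\Ri^d)$,
\[
\D_j(B_p u) \;=\; B_p(\D_j u) \;-\; \tr\big((\D_j C)\,H\big) \;-\; \sum_{k,l=1}^d (\D_l \D_j c_{kl})\,\D_k u ,
\qquad H := (\D_i \D_m u)_{1\le i,m\le d},
\]
where every term on the right lies in $L_p$ because $u\in W^{2,p}(\Ri^d)$, $c_{kl}\in W^{2,\infty}(\Ri^d)$ and $\nabla(B_p u)\in (L_p(\Ri^d))^d$; note $H^T=H$. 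Pairing against $|\nabla u|^{p-2}\,\D_j u$, summing over $j$ and taking real parts splits $\R(\nabla(B_p u), |\nabla u|^{p-2}\nabla u)$ into $P$ and two errors. The error from $\sum_{k,l}(\D_l\D_j c_{kl})\,\D_k u$ has $L_\infty$ coefficients, hence is dominated in modulus by $C\,\|\nabla u\|_p^p$ with $C$ depending only on $d$ and $\sup_{k,l,m}\|\D_m^2 c_{kl}\|_\infty$. The error from $\tr((\D_j C)H)$ is handled by Corollary \ref{W Oleinik 2}(a): since $H^T=H$, $|\tr((\D_j C)H)|^2\le M_0\,\tr(H R_s \overline H)$ with $M_0=32\,d\,(1+\tan\theta)^2\sup_l\|\D_l^2 C\|_\infty$, so this error is at most $\sqrt{M_0 d}\int |\nabla u|^{p-1}\big(\tr(H R_s \overline H)\big)^{1/2}$, which by Young's inequality is, for every $\varepsilon>0$, bounded by $\varepsilon\int |\nabla u|^{p-2}\tr(H R_s \overline H)+C_\varepsilon\|\nabla u\|_p^p$.

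For the main term $P$ I would integrate by parts — directly when $p\ge2$, and for $p<2$ after replacing $|\nabla u|$ by $(|\nabla u|^2+\delta)^{1/2}$ and letting $\delta\downarrow0$ at the end, as in \cite[Proposition 3.5]{MS}. This yields
\[
P \;=\; \int |\nabla u|^{p-2}\sum_{j=1}^d\big((\R C)\,\nabla \D_j u,\nabla \D_j u\big) \;+\; (p-2)\int |\nabla u|^{p-4}\,\R\big(C^{T}W,\R W\big),
\qquad W:=\sum_{m=1}^d \overline{\D_m u}\;\nabla \D_m u ,
\]
and the key observations are that $\R W=\tfrac12\nabla|\nabla u|^2$ and that $B_a=0$ makes $\R C=R_s$ and $B=B_s$. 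Hence the first integral is $G:=\int |\nabla u|^{p-2}\tr(H R_s\overline H)\ge0$, and the second becomes $(p-2)\int |\nabla u|^{p-4}\big[(R_s\R W,\R W)-(B_s\R W,\I W)\big]$. By Cauchy–Schwarz one has the pointwise bound $(R_s\R W,\R W)+(R_s\I W,\I W)\le |\nabla u|^2\,\tr(H R_s\overline H)$, and Lemma \ref{W Bs < Rs} gives $|(B_s\R W,\I W)|\le\tan\theta\,(R_s\R W,\R W)^{1/2}(R_s\I W,\I W)^{1/2}$. Maximising $t\mapsto\tan\theta\sqrt{t(1-t)}-t$ over $[0,1]$ (resp.\ $t\mapsto t+\tan\theta\sqrt{t(1-t)}$, when $p<2$) produces the sharp constant $\tfrac{1-\cos\theta}{2\cos\theta}$ (resp.\ $\tfrac{1+\cos\theta}{2\cos\theta}$), and the hypothesis $|1-\tfrac2p|<\cos\theta$ is exactly what makes $(p-2)\tfrac{1-\cos\theta}{2\cos\theta}<1$ for $p>2$ (resp.\ $(2-p)\tfrac{1+\cos\theta}{2\cos\theta}<1$ for $p<2$); for $p=2$ the cross term vanishes. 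Thus $P\ge\delta\,G$ for some $\delta>0$.

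Finally, choosing $\varepsilon$ small enough that the $\varepsilon G$–part of the $\tr((\D_j C)H)$–error is absorbed by $\tfrac{\delta}{2}G$ (legitimate since $G\le P/\delta$), I obtain
\[
\R\big(\nabla(B_p u), |\nabla u|^{p-2}\nabla u\big) \;\ge\; \tfrac{\delta}{2}\,G - M\,\|\nabla u\|_p^p \;\ge\; -M\,\|\nabla u\|_p^p ,
\]
with $M$ depending only on $d$, $\theta$, $p$ and $\sup_{k,l,m}\|\D_m^2 c_{kl}\|_\infty$. The main obstacle is the sharp bound $P\ge\delta G$: the integration–by–parts bookkeeping in the vector–valued setting is delicate — in particular the identification $\R W=\tfrac12\nabla|\nabla u|^2$ and the reduction of the cross term to a two–variable quadratic form in $\R W$ and $\I W$ — and the constant has to come out exactly, since with $\le$ in place of $<$ one would only get $P\ge0$, which is not enough to absorb the Oleinik error. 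The $p<2$ regularisation is a routine but unavoidable technical layer.
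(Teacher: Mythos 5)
Your proof is correct and follows essentially the same route as the paper's: both split $\D_j(B_p u)$ into a leading piece and the errors $\tr((\D_j C)H)$ and $\sum_{k,l}(\D_l\D_j c_{kl})\D_k u$, estimate the Oleinik error by Corollary \ref{W Oleinik 2}(a) together with Young's inequality, and integrate the leading piece by parts to obtain $G = \int |\nabla u|^{p-2}\,\tr(U\,R_s\,\overline U)$ plus a $(p-2)$ cross-term controlled via Lemmas \ref{W R<trR} and \ref{W Bs < Rs}. Your one-parameter maximisation producing the constants $\tfrac{1\pm\cos\theta}{2\cos\theta}$ is simply a reformulation of the paper's rescaling of $\xi$ and $\eta$ under the equivalent condition $|p-2|\tan\theta < 2\sqrt{p-1}$ with margin $\varepsilon_0$.
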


\begin{proof}
The condition $|1 - \frac{2}{p}| < \cos\theta$ is equivalent to $|p-2| \, \tan\theta < 2 \, \sqrt{p-1}$.
Let $\varepsilon_0 \in (0, 1 \wedge (p-1))$ be such that 
\[
|p-2| \, \tan\theta \leq 2 \, \sqrt{(1-\varepsilon) \, (p-1-\varepsilon)}
\]
for all $\varepsilon \in (0, \varepsilon_0)$.
Let $\varepsilon \in (0, \varepsilon_0)$ be such that 
\begin{equation} \label{W correct varepsilon}
\varepsilon 
< \frac{\varepsilon_0}{32 \, d \, (1 + \tan\theta)^2 \, \sup_{1 \leq l \leq d} \|\D_l^2 C\|_\infty}.
\end{equation}

Let $u \in W^{2,p}(\Ri^d)$.
By Lemma \ref{W cut-off} we can assume without loss of generality that $u$ has a compact support.
For the rest of the proof, all integrations are over the set $\{x \in \Ri^d: |(\nabla u)(x)| \neq 0 \}$.
We have
\begin{eqnarray*}
(\nabla (B_p u), |\nabla u|^{p-2} \, \nabla u)
& = & - \sum_{k,l,j=1}^d \int \Big( \D_j \D_l (c_{kl} \, \D_k u) \Big) \, 
	|\nabla u|^{p-2} \, \D_j \overline{u}
\\
& = & - \sum_{k,l,j=1}^d \int \Big( \D_l \big( (\D_j c_{kl}) \, (\D_k u) 
	+ c_{kl} \, (\D_j \D_k u) \big) \Big) \,
	|\nabla u|^{p-2} \, \D_j \overline{u}
\\
& = & - \sum_{k,l,j=1}^d \int \Big( \D_l \big( (\D_j c_{kl}) \, (\D_k u) \big) \Big) \, 
	|\nabla u|^{p-2} \, \D_j \overline{u}
\\*
& & {} + \sum_{k,l,j=1}^d \int c_{kl} \, (\D_j \D_k u) \, 
	\D_l \big( |\nabla u|^{p-2} \, \D_j \overline{u} \big)
\\
& = & ({\rm I}) + ({\rm II}).
\end{eqnarray*}
We first consider the real part of (I).
We have 
\begin{eqnarray*}
-\R \sum_{k,l,j=1}^d \int \Big( \D_l \big( (\D_j c_{kl}) \, (\D_k u) \big) \Big) \, 
	|\nabla u|^{p-2} \, \D_j \overline{u}
& = & - \R \sum_{k,l,j=1}^d \int (\D_l \D_j c_{kl}) \, (\D_k u) \, 
	(\D_j \overline{u}) \, |\nabla u|^{p-2}
\\
& & {} - \R \sum_{k,l,j=1}^d \int (\D_j c_{kl}) \, (\D_l \D_k u) \, 
	(\D_j \overline{u}) \, |\nabla u|^{p-2}
\\
& = & ({\rm Ia}) + ({\rm Ib}).
\end{eqnarray*}
For (Ia) we have
\[
({\rm Ia}) 
\geq - \frac{1}{2} \sum_{k,l,j=1}^d \|c_{kl}\|_{W^{2,\infty}} 
	\int (|\D_k u|^2 + |\D_j u|^2) \, |\nabla u|^{p-2}
\geq - M_1 \, \|\nabla u\|_p^p,
\]
where $M_1 = d^2 \, \sup \{\|c_{kl}\|_{W^{2,\infty}}: 1 \leq k,l \leq d\}$.
Let $U = (\D_l \D_k u)_{1 \leq k,l \leq d}$.
For (Ib) we estimate
\begin{eqnarray*}
({\rm Ib})  
& = & - \R \sum_{j=1}^d \int \tr((\D_j C) \, U) \, (\D_j \overline{u}) \, |\nabla u|^{p-2}
\\
& \geq & - \sum_{j=1}^d \int \Big( \varepsilon \, |\tr((\D_j C) \, U)|^2 \, |\nabla u|^{p-2}
	+ \frac{1}{4 \varepsilon} \, |\D_j \overline{u}|^2 \, |\nabla u|^{p-2} \Big)
\\
& \geq & - \varepsilon' \int \tr(U \, R_s \, \overline{U}) \, |\nabla u|^{p-2}
	- M_2 \, \|\nabla u\|_p^p,
\end{eqnarray*}
where we used Corollary \ref{W Oleinik 2}(a) in the last step with $\varepsilon' = 32 \, \varepsilon \, d \, (1 + \tan\theta)^2 \, \sup_{1 \leq l \leq d} \|\D_l^2 C\|_\infty$ and $M_2 = \frac{1}{4 \varepsilon}$.
Note that $\varepsilon' \in (0, \varepsilon_0)$ by \eqref{W correct varepsilon}.

Next we consider the real part of (II).
Note that 
\begin{eqnarray*}
\R \sum_{k,l,j=1}^d \int c_{kl} \, (\D_j \D_k u) \, 
	\D_l \big( |\nabla u|^{p-2} \, \D_j \overline{u} \big)
& = & \R \sum_{k,l,j=1}^d \int c_{kl} \, (\D_j \D_k u) \, (\D_l \D_j \overline{u}) \, |\nabla u|^{p-2}
\\
& & {} + \R \sum_{k,l,j=1}^d \int c_{kl} \, (\D_j \D_k u) \, 
	(\D_j \overline{u}) \, \D_l(|\nabla u|^{p-2})
\\
& = & ({\rm IIa}) + ({\rm IIb}).
\end{eqnarray*}
For (IIa) we have 
\[
({\rm IIa}) 
= \int \tr(\overline{U} \, (\R C) \, U) \, |\nabla u|^{p-2}
= \int \tr(U \, R_s \, \overline{U}) \, |\nabla u|^{p-2}
\]
as $B_a = 0$.
For (IIb) we have the following estimate
\begin{eqnarray*}
({\rm IIb}) 
& = & \R \sum_{k,l,i,j=1}^d \frac{p-2}{2} \int c_{kl} \, (\D_j \D_k u) \, (\D_j \overline{u}) \, 
	\Big( (\D_l \D_i u) \, (\D_i \overline{u}) + (\D_l \D_i \overline{u}) \, (\D_i u) \Big) \, |\nabla u|^{p-4}
\\
& = & \frac{p-2}{2} \int \R \Big( \big( C \, U \, \nabla \overline{u}, \overline{U \, \nabla \overline{u}} \big)
	+ \big( C \, U \, \nabla \overline{u}, U \, \nabla \overline{u} \big) \Big) \, |\nabla u|^{p-4}
\\
& = & (p-2) \int \Big( (R_s \, \xi, \xi) - (B_s \, \xi, \eta) \Big) \, |\nabla u|^{p-4},
\end{eqnarray*}
where $\xi, \eta \in \Ri^d$ and $U \, \nabla \overline{u} = \xi + i \, \eta$.

In total we obtain
\begin{eqnarray*}
\R (\nabla (B_p u), |\nabla u|^{p-2} \, \nabla u)
& \geq & - (M_1 + M_2) \, \|\nabla u\|_p^p
	+ (1-\varepsilon') \int \tr(U \, R_s \, \overline{U}) \, |\nabla u|^{p-2}
\\
& & 
	{} + (p-2) \int \Big( (R_s \, \xi, \xi) - (B_s \, \xi, \eta) \Big) \, |\nabla u|^{p-4}
\\
& = & - (M_1 + M_2) \, \|\nabla u\|_p^p + P,
\end{eqnarray*}
where 
\[
P = (1-\varepsilon') \int \tr(U \, R_s \, \overline{U}) \, |\nabla u|^{p-2}
	+ (p-2) \int \Big( (R_s \, \xi, \xi) - (B_s \, \xi, \eta) \Big) \, |\nabla u|^{p-4}.
\]
Next we will show that $P \geq 0$.
Since $B_a = 0$, it follows from Lemma \ref{W R<trR} that
\begin{eqnarray*}
(R_s \, \xi, \xi) + (R_s \, \eta, \eta) 
& = & ((\R C) \, U \, \nabla \overline{u}, U \, \nabla \overline{u})
	\leq \tr(U^* \, (\R C) \, U) \, |\nabla u|^2
\\
& = & \tr(\overline{U} \, R_s \, U) \, |\nabla u|^2
	= \tr(U \, R_s \, \overline{U}) \, |\nabla u|^2.
\end{eqnarray*}
Therefore 
\begin{eqnarray}
P 
& \geq & \int \Big( (p-1-\varepsilon') \, (R_s \, \xi, \xi) + (1-\varepsilon') \, (R_s \, \eta, \eta) 
	- (p-2) \, (B_s \, \xi, \eta) \Big) \, |\nabla u|^{p-4}
\nonumber
\\
& = & \int \Big( (R_s \, \xi', \xi') + (R_s \, \eta', \eta') 
	- \frac{p-2}{\sqrt{(1-\varepsilon') \, (p-1-\varepsilon')}} \, 
	(B_s \, \xi', \eta') \Big) \, |\nabla u|^{p-4},
\label{W P}
\end{eqnarray}
where $\xi' = \sqrt{p-1-\varepsilon'} \, \xi$ and $\eta' = \sqrt{1-\varepsilon'} \, \eta$.
If $\theta = 0$ then it follows from Lemma \ref{W Bs < Rs} that $(B_s \, \xi', \eta') = 0$.
Therefore \eqref{W P} gives
\[
P \geq \int \Big( (R_s \, \xi', \xi') + (R_s \, \eta', \eta') \Big) \, |\nabla u|^{p-4} \geq 0.
\]
If $\theta \neq 0$ then \eqref{W P} can be estimated by
\[
P \geq \int \Big( (R_s \, \xi', \xi') + (R_s \, \eta', \eta') 
	- 2 \, \cot\theta \, |(B_s \, \xi', \eta')| \Big) \, |\nabla u|^{p-4}
	\geq 0,
\]
where we again used Lemma \ref{W Bs < Rs}.
Either way we always have
\[
\R (\nabla (B_p u), |\nabla u|^{p-2} \, \nabla u) \geq - (M_1 + M_2) \, \|\nabla u\|_p^p
\]
as claimed.
\end{proof}

\begin{prop} \label{W Cc dense 2}
Suppose $|1 - \frac{2}{p}| < \cos\theta$ and $B_a = 0$.
Then $B_p$ is $m$-accretive.
Furthermore $C_c^\infty(\Ri^d)$ is a core for $B_p$.
\end{prop}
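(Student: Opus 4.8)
The plan is to reduce the statement to two things: (A) that $B_p$ is $m$-accretive, and (B) that, once (A) is known, the core property follows from Proposition~\ref{W Cc dense 1} together with a $W^{1,p}$-regularity statement for the resolvent. A structural observation drives everything: all of the standing hypotheses are invariant under the substitution $C\mapsto C^{\ast}$, $p\mapsto q$ (with $\tfrac1p+\tfrac1q=1$) --- $C^{\ast}$ again takes values in $\Sigma_\theta$, its imaginary anti-symmetric part is $-B_a^{T}=0$, $\D_l^{2}C^{\ast}$ has the same sup-norm as $\D_l^{2}C$, and $|1-\tfrac2q|=|1-\tfrac2p|<\cos\theta$ --- and since $H_q$ is, after relabelling indices, precisely the operator built from $C^{\ast}$ in $L_q(\Ri^d)$ the way $B_p$ is built from $C$ in $L_p(\Ri^d)$, every statement proved so far for $B_p$, in particular Propositions~\ref{W 1st ineq}, \ref{W 2nd ineq} and~\ref{W Cc dense 1}, has a counterpart for $H_q$.

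The technical heart is an approximation step. For $n\in\Ni$ put $C_n=C+\tfrac1n I$ and let $B_p^{[n]}$ be the associated operator; it is strongly elliptic with $W^{2,\infty}$-coefficients, hence $m$-accretive on $L_p(\Ri^d)$ by standard theory (its accretivity being exactly Proposition~\ref{W 1st ineq} for $C_n$), and since $C_n$ still takes values in $\Sigma_\theta$ with vanishing $B_a$ and $\D_l^{2}C_n=\D_l^{2}C$, Proposition~\ref{W 2nd ineq} holds for $B_p^{[n]}$ with a constant $M$ independent of $n$. Fix $\lambda>M$. For $g\in C_c^\infty(\Ri^d)$ set $v_n=(\lambda+B_p^{[n]})^{-1}g\in W^{2,p}(\Ri^d)$; then $\|v_n\|_p\le\lambda^{-1}\|g\|_p$, while bounding $\R\bigl(\nabla((\lambda+B_p^{[n]})v_n),|\nabla v_n|^{p-2}\nabla v_n\bigr)$ below by Proposition~\ref{W 2nd ineq} (applicable since $\nabla(B_p^{[n]}v_n)=\nabla g-\lambda\nabla v_n\in(L_p(\Ri^d))^d$) and above by H\"older gives $\|\nabla v_n\|_p\le(\lambda-M)^{-1}\|\nabla g\|_p$. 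Thus $\{v_n\}$ is bounded in $W^{1,p}(\Ri^d)$; extracting $v_{n_k}\rightharpoonup v$ weakly in $W^{1,p}(\Ri^d)$ and testing $(\lambda+B_p^{[n_k]})v_{n_k}=g$ against $\psi\in C_c^\infty(\Ri^d)$ --- so $(v_{n_k},(\lambda+H_q)\psi)-\tfrac1{n_k}(v_{n_k},\Delta\psi)=(g,\psi)$ --- and letting $k\to\infty$ yields $(v,(\lambda+H_q)\psi)=(g,\psi)$ for all such $\psi$, which by $B_p=(H_q)^{\ast}$ means $v\in D(B_p)\cap W^{1,p}(\Ri^d)$ and $(\lambda+B_p)v=g$. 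The same construction applies verbatim with $H_q$ in place of $B_p$.

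Running it for $H_q$ produces, for each $g\in C_c^\infty(\Ri^d)$, a function $w\in W^{1,q}(\Ri^d)$ in the domain of the maximal $L_q$-realisation of the formally adjoint expression with that realisation mapping $\lambda+w$ to $g$; invoking the $H_q$-counterpart of Proposition~\ref{W Cc dense 1} to approximate $w$ in graph norm by $\phi_m\in C_c^\infty(\Ri^d)$ gives $(\lambda+H_q)\phi_m\to g$ in $L_q(\Ri^d)$, i.e.\ $w\in D(\overline{H_q})$ and $(\lambda+\overline{H_q})w=g$. Hence $\mathrm{ran}(\lambda+\overline{H_q})\supseteq C_c^\infty(\Ri^d)$ is dense, and since $\overline{H_q}$ is accretive (the $H_q$-counterpart of Proposition~\ref{W 1st ineq} on $C_c^\infty(\Ri^d)$, extended by continuity) its range is closed; so $\overline{H_q}$ is $m$-accretive, and therefore $B_p=(H_q)^{\ast}=(\overline{H_q})^{\ast}$ is $m$-accretive, being the adjoint of an $m$-accretive operator on the reflexive space $L_q(\Ri^d)$. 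For the core property, $\lambda+B_p$ is now a homeomorphism of $(D(B_p),\|\cdot\|_{D(B_p)})$ onto $L_p(\Ri^d)$, so it suffices that $(\lambda+B_p)(C_c^\infty(\Ri^d))$ be dense in $L_p(\Ri^d)$; by the construction of the second paragraph every $g\in C_c^\infty(\Ri^d)$ equals $(\lambda+B_p)v$ with $v=(\lambda+B_p)^{-1}g\in D(B_p)\cap W^{1,p}(\Ri^d)$, so $D(B_p)\cap W^{1,p}(\Ri^d)$ is dense in $D(B_p)$ for the graph norm, and combining this with Proposition~\ref{W Cc dense 1} shows $C_c^\infty(\Ri^d)$ is dense in $D(B_p)$, i.e.\ a core.

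I expect the $m$-accretivity, and specifically the surjectivity of $\lambda+B_p$, to be the main obstacle: degeneracy forbids any elliptic-regularity shortcut, so one is forced through the strongly elliptic approximations $B_p^{[n]}$, the delicate point being to keep the approximating resolvents under control --- which is precisely what the uniform gradient bound of Proposition~\ref{W 2nd ineq} provides --- and then to identify the weak limit using only the duality defining $B_p=(H_q)^{\ast}$. A secondary point needing care is to make the ``dual'' versions of Propositions~\ref{W 1st ineq}, \ref{W 2nd ineq} and~\ref{W Cc dense 1} precise, i.e.\ to check that the structural hypotheses are genuinely preserved under $C\mapsto C^{\ast}$, $p\mapsto q$.
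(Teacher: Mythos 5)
Your proof is correct and uses the same essential ingredients as the paper: the strongly elliptic approximations, the uniform $W^{1,p}$ a priori bounds derived from Propositions~\ref{W 1st ineq} and~\ref{W 2nd ineq}, passage to a weak limit identified via the duality $B_p=(H_q)^*$, and Proposition~\ref{W Cc dense 1} to pass from $D(B_p)\cap W^{1,p}(\Ri^d)$ to $C_c^\infty(\Ri^d)$. The one noteworthy organisational difference lies in how the duality is closed: you establish $m$-accretivity of $\overline{H_q}$ on $L_q(\Ri^d)$ and then take adjoints, using reflexivity of $L_q(\Ri^d)$ so that $B_p=(\overline{H_q})^*$ inherits $m$-accretivity; the paper instead first proves that $\overline{B_p|_{D(B_p)\cap W^{1,p}(\Ri^d)}}$ is $m$-accretive and then shows this closure equals $B_p$ by introducing $G_q=(B_p|_{C_c^\infty(\Ri^d)})^*$ and using the dual-side $m$-accretivity to force the uniqueness $u=v$. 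Your adjoint route is slightly cleaner in that it collapses the two halves of the duality into one invocation of a standard semigroup fact, while the paper's route is more self-contained. Two small simplifications in your argument are also worth flagging: you take $g\in C_c^\infty(\Ri^d)$ rather than $f\in W^{1,p}(\Ri^d)$, which suffices since the range only needs to be dense; and you apply Proposition~\ref{W 2nd ineq} to the whole operator $B_p^{[n]}$ (with a constant uniform in $n$) rather than splitting $B_{p,n}=B_p-\frac1n\Delta$ and applying it to $B_p$ and $-\Delta$ separately, which lets you work with $v_n\in W^{2,p}(\Ri^d)$ directly and skip the elliptic-regularity upgrade to $W^{3,p}(\Ri^d)$ that the paper uses.
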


\begin{proof}
We will proceed in three steps.
\\
{\bf Step 1:} We will show that $\overline{B_p|_{D(B_p) \cap W^{1,p}(\Ri^d)}}$ is $m$-accretive.
\\
It follows from Propositions \ref{W 1st ineq} and \ref{W Cc dense 1} that $B_p|_{D(B_p) \cap W^{1,p}(\Ri^d)}$ is accretive.
Hence $\overline{B_p|_{D(B_p) \cap W^{1,p}(\Ri^d)}}$ is also accretive.

Next we will show that there exists a $\lambda > 0$ such that $(\lambda + B_p)(D(B_p) \cap W^{1,p}(\Ri^d))$ is dense in $L_p(\Ri^d)$.
In fact we will show that there exists a $\lambda > 0$ such that $W^{1,p}(\Ri^d) \subset (\lambda + B_p)(D(B_p) \cap W^{1,p}(\Ri^d))$.
Since $-\Delta$ satisfies the same conditions as those of $B_p$, Proposition \ref{W 2nd ineq} also applies to $-\Delta$.
In particular there exists an $M' > 0$ such that 
\[
\R \Big( \nabla(\Delta u), \, |\nabla u|^{p-2} \, \nabla u \Big) 
\geq - M' \, \|\nabla u\|_p^p
\]
for all $u \in W^{3,p}(\Ri^d)$.

For all $n \in \Ni$ define the operator $B_{p,n}$ by
\[
B_{p,n} u  = B_p u - \frac{1}{n} \, \Delta u
\]
on the domain
\[
D(B_{p,n}) = W^{2,p}(\Ri^d),
\]
where $\Delta = \D_1^2 + \ldots + \D_d^2$.
Note that for each $n \in \Ni$ the operator $B_{p,n}$ is strongly elliptic, which implies that $B_{p,n}$ is closed.

Let $M$ be as in Proposition \ref{W 2nd ineq} and $\lambda =  M + M' + 1$. 
Let $f \in W^{1,p}(\Ri^d)$.
Let $n \in \Ni$.
Then there exists a $u_n \in W^{2,p}(\Ri^d)$ such that $(\lambda + B_{p,n}) u_n = f$.
Elliptic regularity gives $u_n \in W^{3,p}(\Ri^d)$.
It follows that $\nabla(B_{p,n} u_n) = \nabla (f - \lambda \, u_n) \in (L_p(\Ri^d))^d$ and $\nabla (B_p u_n) = \nabla (B_{p,n} u_n) + \frac{1}{n} \, \nabla (\Delta u_n) \in (L_p(\Ri^d))^d$.
By Proposition \ref{W 1st ineq} we have
\[
(f, |u_n|^{p-2} \, u_n)
= \lambda \, \|u_n\|_p^p + (B_{p,n} u_n, |u_n|^{p-2} \, u_n) 
\geq \lambda \, \|u_n\|_p^p \geq \|u_n\|_p^p.
\]
However
\[
(f, |u_n|^{p-2} \, u_n) 
\leq \|f\|_p \, \||u_n|^{p-2} \, u_n\|_q
= \|f\|_p \, \|u_n\|_p^{p/q}
\]
by H\"{o}lder's inequality.
Therefore $\|u_n\|_p^p \leq \|f\|_p \, \|u_n\|_p^{p/q}$, or equivalently $\|u_n\|_p \leq \|f\|_p$.
Also it follows from Proposition \ref{W 2nd ineq} that
\begin{eqnarray*}
(\nabla f, |\nabla u_n|^{p-2} \, \nabla u_n)
& = & \lambda \, \|\nabla u_n\|_p^p + \R \Big( \nabla (B_{p,n} u_n), |\nabla u_n|^{p-2} \, \nabla u_n \Big)
\\
& = & \lambda \, \|\nabla u_n\|_p^p
	+ \R \Big( \nabla (B_p u_n), |\nabla u_n|^{p-2} \, \nabla u_n \Big)
	\\*
	& &
	{} - \frac{1}{n} \, \R \Big( \nabla(\Delta u_n), \, |\nabla u_n|^{p-2} \, \nabla u_n \Big)
\\*
& \geq & (\lambda - M - M') \, \|\nabla u_n\|_p^p = \|\nabla u_n\|_p^p.
\end{eqnarray*}
Again the H\"{o}lder's inequality gives $\|\nabla u_n\|_p \leq \|\nabla f\|_p$.
Hence $\|u_n\|_{W^{1,p}} \leq \|f\|_{W^{1,p}}$.
In particular $\{u_k\}_{k \in \Ni}$ is bounded in $W^{1,p}(\Ri^d)$.
Passing to a subsequence if necessary we may assume that $\{u_k\}_{k \in \Ni}$ converges weakly to a $u \in W^{1,p}(\Ri^d)$.
Note that $B_{p,n} u_n = f - \lambda \, u_n$.
Therefore $\{B_{p,n} u_n\}_{k \in \Ni}$ is bounded in $L_p(\Ri^d)$.
Passing to a subsequence if necessary we again assume that $\{B_{p,k} u_k\}_{k \in \Ni}$ converges weakly to a $v \in L_p(\Ri^d)$.
Then $v = f - \lambda \, u$.
We will show that $B_p u = v$.
Indeed let $\phi \in C_c^\infty(\Ri^d)$.
Then $\lim_{n \to \infty} B_{p,n}^* \phi = B_p^* \phi$ strongly in $L_q(\Ri^d)$ and
\[
(v, \phi)
= \lim_{n \to \infty} (B_{p,n} u_n, \phi)
= \lim_{n \to \infty} (u_n, B_{p,n}^* \phi)
= (u, B_p^* \phi).
\]
Therefore $u \in D(B_p)$ and $B_p u = v$.
Hence $(\lambda + B_p) u = f$.
\\
{\bf Step 2:} We will show that $\overline{B_p|_{D(B_p) \cap W^{1,p}(\Ri^d)}} = B_p$, which implies $B_p$ is $m$-accretive.
\\
Clearly $\overline{D(B_p) \cap W^{1,p}(\Ri^d)}^{\|\cdot\|_{D(B_p)}} \subset D(B_p)$.
For the reverse inclusion let $u \in D(B_p)$ and $\lambda$ be defined as in Step 1.
Since $(\lambda + B_p) u \in L_p(\Ri^d)$ and $\overline{B_p|_{D(B_p) \cap W^{1,p}(\Ri^d)}}$ is $m$-accretive, there exists a $v \in \overline{D(B_p) \cap W^{1,p}(\Ri^d)}^{\|\cdot\|_{D(B_p)}}$ such that $(\lambda + B_p) v = (\lambda + B_p) u$.
Equivalently 
\begin{equation} \label{W Hq duality arguments}
(u-v, (\lambda + H_q) \phi) = 0
\end{equation}
for all $\phi \in C_c^\infty(\Ri^d)$.

Define $G_q = (B_p|_{C_c^\infty(\Ri^d)})^*$.
Then $H_q \subset G_q$.
Note that $|1 - \frac{2}{p}| < \cos\theta$ is equivalent to $|1 - \frac{2}{q}| < \cos\theta$.
Furthermore $C^*$ satisfies the same condition as those of $C$.
Therefore all previous results apply to $G_q$.
In particular, Proposition \ref{W Cc dense 1} gives $C_c^\infty(\Ri^d)$ is dense in $(D(G_q) \cap W^{1,q}(\Ri^d), \|\cdot\|_{D(G_q)})$ and Step 1 gives $\overline{G_q|_{D(G_q) \cap W^{1,q}(\Ri^d)}}$ is $m$-accretive.

Now it follows from \eqref{W Hq duality arguments} that
\[
(u-v, (\lambda + G_q) \phi) = 0
\]
for all $\phi \in \overline{(D(G_q) \cap W^{1,q}(\Ri^d), \|\cdot\|_{D(G_q)})}$.
Since $\overline{G_q|_{D(G_q) \cap W^{1,q}(\Ri^d)}}$ is $m$-accretive, we must have $u = v$.
\\
{\bf Step 3:} We will show that $C_c^\infty(\Ri^d)$ is a core for $B_p$.
\\
This follows immediately from Proposition \ref{W Cc dense 1} and Step 2.
\end{proof}

\section{The core property for $A_p$}

Let $p \in (1, \infty)$ be such that $|1 - \frac{2}{p}| < \cos\theta$.
Suppose $B_a = 0$.
In Section \ref{S4.3}, we proved that the contraction $C_0$-semigroup $S$ generated by $A$ extends consistently to a contraction $C_0$-semigroup $S^{(p)}$ on $L_p(\Ri^d)$.
Let $-A_p$ be the generator of $S^{(p)}$.
In this section we will show that the operator $A_p$ and $B_p$ are in fact the same.
Consequently the space of test functions $C_c^\infty(\Ri^d)$ is a core for $A_p$.
This is the content of Theorem \ref{main theorem higher dimensions}, which is also the main theorem of the paper.

\begin{prop} \label{W Ap = Bp}
Let $p \in (1, \infty)$ be such that $|1 - \frac{2}{p}| < \cos\theta$.
Suppose $B_a = 0$.
Then $A_p = B_p$.
\end{prop}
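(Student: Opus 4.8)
The plan is to prove the inclusion $B_p \subset A_p$ and then use the $m$-accretivity of both operators to promote this to an equality. Note that the direction matters: Proposition \ref{W Cc dense 2} gives us that $C_c^\infty(\Ri^d)$ is a core for $B_p$, but the analogous statement for $A_p$ is precisely Theorem \ref{main theorem higher dimensions}, which we are not yet entitled to use.

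First I would check that $A_p$ and $B_p$ act in the same way on test functions. By construction $C_c^\infty(\Ri^d) \subset D(A_p)$ (as already recorded above) and $C_c^\infty(\Ri^d) \subset W^{2,p}(\Ri^d) \subset D(B_p)$, and for $u \in C_c^\infty(\Ri^d)$ one has $B_p u = -\sum_{k,l=1}^d \D_l(c_{kl} \, \D_k u)$ directly from the definition of $B_p$. For $A_p$, the relevant point is the consistency built into Proposition \ref{W Lp extension}: $S^{(p)}$ restricts to $S$ on $L_2(\Ri^d) \cap L_p(\Ri^d)$, so for $u \in C_c^\infty(\Ri^d)$ the difference quotients $t^{-1}(S_t u - u)$ converge in $L_2(\Ri^d)$ to $-Au = -\sum_{k,l=1}^d \D_l(c_{kl} \, \D_k u)$ (since $u \in W^{2,2}(\Ri^d) \subset D(A)$) and, because $u \in D(A_p)$, also converge in $L_p(\Ri^d)$ to $-A_p u$; extracting an a.e.\ convergent subsequence of the common family forces $A_p u = -\sum_{k,l=1}^d \D_l(c_{kl} \, \D_k u) = B_p u$. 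Hence $A_p|_{C_c^\infty(\Ri^d)} = B_p|_{C_c^\infty(\Ri^d)}$. Since $C_c^\infty(\Ri^d)$ is a core for $B_p$, it follows that $B_p = \overline{B_p|_{C_c^\infty(\Ri^d)}} = \overline{A_p|_{C_c^\infty(\Ri^d)}} \subset A_p$, the last inclusion because $-A_p$ generates a $C_0$-semigroup and is therefore closed.

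To close the argument I would use maximality. The operator $A_p$ is $m$-accretive since $S^{(p)}$ is a contraction $C_0$-semigroup, and $B_p$ is $m$-accretive by Proposition \ref{W Cc dense 2}. Fix $\lambda > 0$. Given $u \in D(A_p)$, set $f = (\lambda + A_p)u$; surjectivity of $\lambda + B_p$ yields $v \in D(B_p)$ with $(\lambda + B_p)v = f$, and then $B_p \subset A_p$ gives $(\lambda + A_p)v = f = (\lambda + A_p)u$, so $u = v \in D(B_p)$ by injectivity of $\lambda + A_p$. Therefore $D(A_p) \subset D(B_p)$, which together with $B_p \subset A_p$ yields $A_p = B_p$.

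The only genuinely delicate point is the identification of $A_p u$ with the differential expression on $C_c^\infty(\Ri^d)$, which rests entirely on the consistency of the $L_p$-extension from Proposition \ref{W Lp extension}; everything else is soft functional analysis, the substantive work having already been done in showing that $B_p$ is $m$-accretive with $C_c^\infty(\Ri^d)$ as a core.
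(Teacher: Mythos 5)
Your proof is correct, but it runs the paper's argument in the opposite direction. The paper first establishes the inclusion $A_p \subset B_p$: for $u \in D(A) \cap D(A_p)$ the consistency of $S$ and $S^{(p)}$ gives $(A_p u, \phi) = (Au, \phi) = \gota(u,\phi) = (u, H_q\phi)$ for all $\phi \in C_c^\infty(\Ri^d)$, which by the very definition $B_p = (H_q)^*$ yields $u \in D(B_p)$ with $B_p u = A_p u$; since $D(A)\cap D(A_p)$ is a core for $A_p$ (a standard fact about consistent $C_0$-semigroups) and $B_p$ is closed, $A_p \subset B_p$ follows. You instead establish $B_p \subset A_p$ by showing $A_p$ and $B_p$ agree on $C_c^\infty(\Ri^d)$ -- via the difference-quotient/a.e.-subsequence argument, which does rest on the paper's unproved (but correct) assertion that $C_c^\infty(\Ri^d)\subset D(A_p)$ -- and then invoking that $C_c^\infty(\Ri^d)$ is a core for $B_p$ (Proposition \ref{W Cc dense 2}) together with the closedness of $A_p$. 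Both proofs then conclude with the same soft step: a pair of nested $m$-accretive operators must coincide. The paper's direction is marginally more economical, since it uses Proposition \ref{W Cc dense 2} only for the $m$-accretivity of $B_p$ and the duality pairing places $u$ directly in $D(B_p)$ without any pointwise identification; your direction trades this for working on the concrete dense set $C_c^\infty(\Ri^d)$ rather than the more abstract $D(A)\cap D(A_p)$, and exploits the core property of $C_c^\infty(\Ri^d)$ for $B_p$ which you had just established. Either route is sound.
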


\begin{proof}
Let $u \in D(A) \cap D(A_p)$.
Then
\[
(A_p u, \phi) = (Au, \phi) = \gota(u, \phi) = (u, H_q \phi)
\]
for all $\phi \in C_c^\infty(\Ri^d)$.
It follows that $u \in D(B_p)$ and $B_p u = A_p u$.
In particular this implies that $D(A) \cap D(A_p) \subset D(B_p)$.
However $D(A) \cap D(A_p)$ is a core for $A_p$ and $B_p$ is closed.
Hence $D(A_p) \subset D(B_p)$.
On the other hand note that $-A_p$ generates a contraction $C_0$-semigroup.
Therefore $A_p$ is $m$-accretive.
By Proposition \ref{W Cc dense 2} the operator $B_p$ is also $m$-accretive.
Hence $A_p = B_p$ as required.
\end{proof}

Theorem \ref{main theorem higher dimensions} now follows immediately from the above proposition.

\begin{proof}[Proof of Theorem \ref{main theorem higher dimensions}]
By Proposition \ref{W Cc dense 2} the space $C_c^\infty(\Ri^d)$ is a core for $B_p$. 
Since $A_p = B_p$ by Proposition \ref{W Ap = Bp}, it follows that $C_c^\infty(\Ri^d)$ is also a core for $A_p$.
\end{proof}

\section{More sufficient conditions in $L_2$}

This section is motivated by the fact that $B_2$ is accretive on $W^{2,2}(\Ri^d)$ without the requirement that $B_a = 0$ (cf.\ Proposition \ref{W 1st ineq}).
In fact more is true.

\begin{prop}
We have
\[
\R (B_2 u, u) \geq 0
\]
for all $u \in W^{1,2}(\Ri^d) \cap D(B_2)$.
\end{prop}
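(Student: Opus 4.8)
The plan is to follow the same circle of ideas used in the proof of Proposition \ref{W 1st ineq}, specialised to $p=2$, and then pass from the dense subspace $C_c^\infty(\Ri^d)$ to all of $W^{1,2}(\Ri^d) \cap D(B_2)$ by a closedness argument. When $p=2$ the weight $|u|^{p-2}$ is trivial and the formula \eqref{W MS expansion} collapses dramatically: for $u \in W^{2,2}(\Ri^d)$ one simply has $(B_2 u, u) = \int (C \, \nabla \overline u, \nabla \overline u)$, so that $\R(B_2 u, u) = \R \int (C \, \nabla \overline u, \nabla \overline u) = \int ((\R C) \, \nabla \overline u, \nabla \overline u) \geq 0$ because $C$ takes values in $\Sigma_\theta$ and hence $\R C \geq 0$. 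Notice that no hypothesis on $B_a$ enters here, which is exactly the point of the section. So the first step is to record this identity and the resulting inequality for $u \in W^{2,2}(\Ri^d)$, citing the expansion from \cite[Proposition 3.5]{MS} or re-deriving it by integration by parts (which is legitimate since $u \in W^{2,2}$ and $\phi \in C_c^\infty$ in the definition of $B_2 = (H_2)^*$, noting $q = 2$ here).

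Next I would promote this from $W^{2,2}(\Ri^d)$ to $C_c^\infty(\Ri^d)$ (trivial, since $C_c^\infty \subset W^{2,2}$) and then to the closure. The relevant density statement is Proposition \ref{W Cc dense 1} with $p = 2$: $C_c^\infty(\Ri^d)$ is dense in $(D(B_2) \cap W^{1,2}(\Ri^d), \|\cdot\|_{D(B_2)})$. Given $u \in D(B_2) \cap W^{1,2}(\Ri^d)$, pick $u_n \in C_c^\infty(\Ri^d)$ with $u_n \to u$ in the graph norm of $B_2$ and hence also $u_n \to u$ in $L_2(\Ri^d)$. Then $\R(B_2 u_n, u_n) \geq 0$ for all $n$, and since $B_2 u_n \to B_2 u$ and $u_n \to u$ in $L_2(\Ri^d)$, the inner products converge: $\R(B_2 u_n, u_n) \to \R(B_2 u, u)$. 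Therefore $\R(B_2 u, u) \geq 0$, which is the claim.

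The only mild subtlety — and the place I would be most careful — is the integration-by-parts identity $(B_2 u, u) = \int (C\,\nabla\overline u, \nabla\overline u)$ for $u \in W^{2,2}(\Ri^d)$, or equivalently the $p=2$ case of the expansion quoted from \cite{MS}. One must check that the boundary terms vanish; since $u, \nabla u \in L_2(\Ri^d)$ and $c_{kl} \in W^{2,\infty}$, one can either invoke \cite[Proposition 3.5]{MS} directly with $p = 2$ (where the last two integrals in \eqref{W MS expansion} carry the factor $p - 2 = 0$ and so disappear), or approximate $u$ by $C_c^\infty$ functions in the $W^{2,2}$-norm and pass to the limit. Everything else is a soft functional-analytic limiting argument, so there is no real obstacle beyond bookkeeping.
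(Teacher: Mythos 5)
Your argument is correct.  The route you take is close to the paper's but not identical, and the difference is worth flagging.  The paper integrates by parts \emph{directly} on $D(B_2)\cap W^{1,2}(\Ri^d)$: since $u\in W^{1,2}(\Ri^d)$, one integration by parts in $(B_2u,\phi)=(u,H_2\phi)$ gives
\[
(B_2u,\phi)=\sum_{k,l=1}^d\int_{\Ri^d} c_{kl}\,(\D_ku)\,\D_l\overline\phi
\qquad\mbox{for all } \phi\in C_c^\infty(\Ri^d),
\]
and one then substitutes $\phi=u$ after a routine $W^{1,2}$-approximation of the test function, obtaining $(B_2u,u)=\int_{\Ri^d}(C\,\nabla u,\nabla u)$ and hence $\R(B_2u,u)=\int_{\Ri^d}((\R C)\,\nabla u,\nabla u)\geq0$.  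You instead establish the identity only on the nice class $C_c^\infty(\Ri^d)$ (where the integration by parts is immediate) and then pass to general $u\in D(B_2)\cap W^{1,2}(\Ri^d)$ via Proposition~\ref{W Cc dense 1} together with $L_2$-continuity of the inner product.  That density statement does indeed hold with no hypothesis on $\theta$ or on $B_a$ (it rests only on Lemmas~\ref{W cut-off} and~\ref{W smoothen}), so the step is legitimate; it is slightly more roundabout than the paper's computation, but it cleanly sidesteps the need to justify the integration by parts for $u$ that is merely $W^{1,2}$.  Two small remarks.  First, invoking \cite[Proposition~3.5]{MS} is overkill for $p=2$; a direct integration by parts is shorter and is what the paper does.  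Second, your use of $(C\,\nabla\overline u,\nabla\overline u)$ (inherited from \eqref{W MS expansion}) rather than $(C\,\nabla u,\nabla u)$ is harmless but should be noted: the two expressions generally differ when $C$ is not symmetric, yet each has nonnegative real part because $\R C\geq0$ is applied to $\nabla\overline u$ and $\nabla u$ respectively, so the conclusion is unaffected.
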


\begin{proof}
Let $u \in W^{1,2}(\Ri^d) \cap D(B_2)$.
Then 
\begin{eqnarray*}
\R (B_2 u, u) 
& = & - \R \sum_{k,l=1}^d \int_{\Ri^d} (\D_l (c_{kl} \, \D_k u)) \, \overline{u}
	= \R \sum_{k,l=1}^d \int_{\Ri^d} c_{kl} \, (\D_k u) \, \D_l \overline{u}
\\
& = & \R \int_{\Ri^d} (C \, \nabla u, \nabla u)
	= \int_{\Ri^d} ((\R C) \, \nabla u, \nabla u) 
	\geq 0
\end{eqnarray*}
as claimed.
\end{proof}

Define the operator $Z = \overline{B_2|_{C_c^\infty(\Ri^d)}}$.
Then $Z$ is closed.
Furthermore we have the following.

\begin{prop} \label{W Z description}
The operator $Z$ is accretive and $Z = \overline{B_2|_{W^{1,2}(\Ri^d) \cap D(B_2)}}$.
\end{prop}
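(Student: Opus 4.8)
The plan is to establish the two assertions separately, with the second one carrying the real content. Accretivity of $Z$ is immediate: $Z = \overline{B_2|_{C_c^\infty(\Ri^d)}}$ is the closure of a densely defined operator, and on $C_c^\infty(\Ri^d)$ we have $\R(B_2 u, u) = \int_{\Ri^d}((\R C)\,\nabla u, \nabla u) \geq 0$ by the computation in the preceding proposition (using $\R C \geq 0$, which follows from \eqref{values in sector}). Accretivity is preserved under closure, so $Z$ is accretive.

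For the identity $Z = \overline{B_2|_{W^{1,2}(\Ri^d) \cap D(B_2)}}$, the inclusion $\overline{B_2|_{C_c^\infty(\Ri^d)}} \subset \overline{B_2|_{W^{1,2}(\Ri^d) \cap D(B_2)}}$ is trivial since $C_c^\infty(\Ri^d) \subset W^{1,2}(\Ri^d) \cap D(B_2)$ and both are restrictions of the closed operator $B_2$. For the reverse inclusion it suffices to show that $C_c^\infty(\Ri^d)$ is dense in $(W^{1,2}(\Ri^d) \cap D(B_2), \|\cdot\|_{D(B_2)})$; but this is precisely Proposition \ref{W Cc dense 1} applied with $p = 2$. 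Note that the hypotheses $|1 - \tfrac{2}{p}| < \cos\theta$ and $B_a = 0$ used in Sections 4 and 5 are \emph{not} invoked in the proof of Proposition \ref{W Cc dense 1} itself: that proposition relies only on Lemma \ref{W cut-off} and Proposition \ref{W smoothen}, whose proofs use only $c_{kl} \in W^{2,\infty}(\Ri^d)$ and the definition of $B_p$ as $(H_q)^*$. So the density result is available unconditionally for $p = 2$. Chaining the cut-off step ($\tau_n u \to u$ in $D(B_2)$) with the mollification step ($J_m * (\tau_n u) \to \tau_n u$ in $D(B_2)$, via Proposition \ref{W smoothen}) produces, for any $u \in W^{1,2}(\Ri^d) \cap D(B_2)$, a sequence in $C_c^\infty(\Ri^d)$ converging to $u$ in graph norm. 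Hence $W^{1,2}(\Ri^d) \cap D(B_2) \subset \overline{C_c^\infty(\Ri^d)}^{\|\cdot\|_{D(B_2)}}$, and taking closures of both restrictions gives equality.

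The only point requiring a little care is that Proposition \ref{W smoothen} is stated for $u \in D(B_p) \cap W^{1,p}(\Ri^d) \cap L_{p,c}(\Ri^d)$, so one must first pass through the compactly supported truncations $\tau_n u$ (which lie in $D(B_2) \cap W^{1,2}(\Ri^d) \cap L_{2,c}(\Ri^d)$ by Lemma \ref{W cut-off}) before mollifying — exactly the two-step argument already carried out in the proof of Proposition \ref{W Cc dense 1}. I do not anticipate a genuine obstacle here; the statement is essentially a packaging of Proposition \ref{W Cc dense 1} at $p=2$ together with the elementary observation that closing a restriction of a closed operator along a graph-dense subspace recovers the closure of the restriction to any intermediate subspace.
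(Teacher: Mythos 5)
Your proof is correct and follows essentially the same route as the paper: the identity is deduced directly from Proposition~\ref{W Cc dense 1} (applied at $p=2$), and accretivity follows from the sign of $\R(B_2u,u)$ on the core together with the stability of accretivity under closure. The only cosmetic difference is that the paper first proves the equality and then reads accretivity off the unnamed proposition just before it, whereas you verify accretivity on $C_c^\infty(\Ri^d)$ first; your additional observation that Proposition~\ref{W Cc dense 1} needs neither $B_a=0$ nor $|1-\tfrac{2}{p}|<\cos\theta$ is accurate and worth having spelled out.
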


\begin{proof}
It suffices to show $Z = \overline{B_2|_{W^{1,2}(\Ri^d) \cap D(B_2)}}$.
This follows immediately from Proposition \ref{W Cc dense 1}.
\end{proof}

From now on we drop the condition that $B_a \neq 0$.
In this section we will provide many sufficient conditions for the space of test functions $C_c^\infty(\Ri^d)$ to be a core for the operator $A$.
Define the operator $L$ in $L_2(\Ri^d)$ as follows.
\begin{equation} \label{W L}
Lu = - \sum_{k,l=1}^d \D_k \big( \overline{(B_a)_{kl}} \, \D_l u \big)
\end{equation}
on the domain
\[
D(L) = C_c^\infty(\Ri^d).
\]
Next define the operator associated with $B_a$ as $(B_a)^\op = L^*$, which is the dual of $L$.
In what follows we denote $(\D_k B_a)_{kl} = \D_k \big( (B_a)_{kl} \big)$ for all $k, l \in \{1, \ldots, d\}$.
Although $(B_a)^\op$ appears to be a differential operator of second order, it is in fact a first-order differential operator.
Indeed for all $u \in D((B_a)^\op)$ and $\phi \in C_c^\infty(\Ri^d)$ we have
\begin{eqnarray}
\big( (B_a)^\op u, \phi \big)
& = & (u, L \phi)
	= - \sum_{k,l=1}^d \int_{\Ri^d} 
		u \, \D_k \big( (B_a)_{kl} \, \D_l \phi \big)
\nonumber
\\
& = & - \sum_{k,l=1}^d \int_{\Ri^d} 
		u \, \Big( (\D_k B_a)_{kl} \, \D_l \phi 
		+ (B_a)_{kl} \, \D_k \D_l \phi \Big)
\nonumber
\\
& = & - \sum_{k,l=1}^d \int_{\Ri^d} 
		u \, (\D_k B_a)_{kl} \, \D_l \phi,
\label{Ba op 1}
\end{eqnarray}
where the last step follows from the anti-symmetry of $B_a$.
Note that $(B_a)_{kl} \in W^{2,\infty}(\Ri^d)$ for all $k,l \in \{1, \ldots, d\}$.
Therefore it follows from \eqref{Ba op 1} that $W^{1,2}(\Ri^d) \subset D((B_a)^\op)$ and 
\begin{eqnarray*}
\big( (B_a)^\op u, \phi \big) 
& = & \sum_{k,l=1}^d \int_{\Ri^d} \D_l \big( (\D_k B_a)_{kl} \, u \big) \, \phi
	= \sum_{k,l=1}^d \int_{\Ri^d} \big( (\D_l \D_k B_a)_{kl} \, u + (\D_k B_a)_{kl} \, (\D_l u) \big) \, \phi
\\
& = & \sum_{k,l=1}^d \int_{\Ri^d} (\D_k B_a)_{kl} \, (\D_l u) \, \phi
	= - \sum_{k,l=1}^d \int_{\Ri^d} (\D_l B_a)_{kl} \, (\D_k u) \, \phi
\end{eqnarray*}
for all $u \in W^{1,2}(\Ri^d)$ and $\phi \in C_c^\infty(\Ri^d)$ since $B_a$ is anti-symmetric.
Hence 
\[
(B_a)^\op u 
= \sum_{k,l=1}^d \D_l \big( (\D_k B_a)_{kl} \, u \big)
= - \sum_{k,l=1}^d (\D_l B_a)_{kl} \, \D_k u
\]
for all $u \in W^{1,2}(\Ri^d)$.

\begin{lemm} \label{W Ba estimate}
For all $\varepsilon > 0$ there exists an $M > 0$ such that
\[
\big| \big( (B_a)^\op u, - \Delta u \big) \big|
\leq \varepsilon \int_{\Ri^d} \|(\D_l B_a) \, U\|_{HS}^2 + M \, \|\nabla u\|_2^2
\]
for all $u \in C_c^\infty(\Ri^d)$, where $U = (\D_l \D_k u)_{1 \leq k,l \leq d}$.
\end{lemm}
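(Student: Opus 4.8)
The plan is to unwind $\big((B_a)^\op u, -\Delta u\big)$ using the first-order expression for $(B_a)^\op$ established just above the statement, namely $(B_a)^\op u = -\sum_{k,l=1}^d (\D_l B_a)_{kl}\,\D_k u$, which is valid on $W^{1,2}(\Ri^d)$ and in particular on $C_c^\infty(\Ri^d)$. Writing $-\Delta u = -\sum_{j=1}^d \D_j^2 u$ and using that $u \in C_c^\infty(\Ri^d)$ to justify all manipulations, we get
\[
\big((B_a)^\op u, -\Delta u\big) = \sum_{j,k,l=1}^d \int_{\Ri^d} (\D_l B_a)_{kl}\,(\D_k u)\,\overline{\D_j^2 u}.
\]
I would then integrate by parts in $\D_j$, moving one derivative off $\overline{\D_j^2 u}$. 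This produces two groups of terms: one in which the extra derivative falls on the coefficient, contributing $(\D_j\D_l B_a)_{kl}$, and one in which it falls on $\D_k u$, contributing $\D_j\D_k u$.

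For the first group, I would bound $\|(\D_j\D_l B_a)_{kl}\|_\infty \le \|B_a\|_{W^{2,\infty}} \le \sup_{1\le k,l\le d}\|c_{kl}\|_{W^{2,\infty}} < \infty$ and apply $2ab\le a^2+b^2$ to obtain a bound of the form $C\,\|\nabla u\|_2^2$, which is absorbed into the term $M\,\|\nabla u\|_2^2$. For the second group, the key observation is that $\D_j\D_k u$ is the $(k,j)$ entry of $U = (\D_l\D_k u)_{1\le k,l\le d}$; after summing over $k$, the inner sum $\sum_k (\D_l B_a)_{kl}\,U_{kj}$ is the $(l,j)$ entry of $(\D_l B_a)^T U$, and since $B_a$ is anti-symmetric so is each $\D_l B_a$, whence $(\D_l B_a)^T = -(\D_l B_a)$. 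Thus the second group equals (up to sign) $\sum_{j,l}\int_{\Ri^d} \big((\D_l B_a) U\big)_{lj}\,\overline{\D_j u}$. A Young's inequality with parameter $\varepsilon$ then splits this into $\varepsilon\sum_l \int_{\Ri^d}\|(\D_l B_a)U\|_{HS}^2$ plus a constant multiple of $\sum_{j,l}\int_{\Ri^d}|\D_j u|^2 = d\,\|\nabla u\|_2^2$, using that $\sum_{j,l}|((\D_l B_a)U)_{lj}|^2 \le \sum_l \|(\D_l B_a)U\|_{HS}^2$. Combining the two groups gives the claim with $M$ depending only on $\varepsilon$, $d$, and $\sup_{1\le k,l\le d}\|c_{kl}\|_{W^{2,\infty}}$.

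The main difficulty here is not analytic but bookkeeping: one must carry the three summation indices correctly through the integration by parts and, crucially, recognise $\D_j\D_k u$ as an entry of $U$ so that the Hilbert--Schmidt norm appearing on the right-hand side of the statement is exactly reproduced. The single genuine structural input is the anti-symmetry of $B_a$ (hence of $\D_l B_a$), which is precisely what makes the term containing second derivatives of $u$ collapse to the matrix product $(\D_l B_a)\,U$ rather than to a less convenient contraction; everything else — the $W^{2,\infty}$ bounds and Young's inequality — is routine.
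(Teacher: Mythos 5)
Your proof matches the paper's proof step for step: expand $(B_a)^\op u$ as the first-order expression $-\sum_{k,l}(\D_l B_a)_{kl}\,\D_k u$, integrate by parts once in the $\D_j$-variable, split into the term where the derivative falls on the coefficient (controlled by the $W^{2,\infty}$-norm, absorbed into $M\,\|\nabla u\|_2^2$) and the term where it falls on $\D_k u$, use the anti-symmetry of $\D_l B_a$ to recognise that term as $\sum_{l,j}\int\big((\D_l B_a)U\big)_{lj}\,\overline{\D_j u}$, and finish with Young's inequality. Your observation that $\sum_{l,j}|((\D_l B_a)U)_{lj}|^2 \le \sum_l\|(\D_l B_a)U\|_{HS}^2$ is in fact slightly more careful than the paper's, which writes this as an equality under its (somewhat informal) notation for $\|(\D_l B_a)U\|_{HS}^2$; both readings lead to the stated bound.
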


\begin{proof}
Let $u \in C_c^\infty(\Ri^d)$ and write $U = (\D_l \D_k u)_{1 \leq k,l \leq d}$.
Then
\begin{eqnarray*}
\big| \big( (B_a)^\op u, - \Delta u \big) \big|
& = & \Big| \sum_{k,l,j=1}^d \int_{\Ri^d} (\D_l B_a)_{kl} \, (\D_k u) \, \D_j^2 \overline{u} \Big|
\\
& = & \Big| \sum_{k,l,j=1}^d \int_{\Ri^d} \Big( (\D_j \D_l B_a)_{kl} \, \D_k u
	+ (\D_l B_a)_{kl} \, \D_k \D_j u \Big) \, \D_j \overline{u} \Big|
\\
& \leq & \Big| \sum_{k,l,j=1}^d \int_{\Ri^d} (\D_j \D_l B_a)_{kl} \, (\D_k u) \, \D_j \overline{u} \Big|
	+ \Big| \sum_{k,l,j=1}^d \int_{\Ri^d} (\D_l B_a)_{kl} \, (\D_k \D_j u) \, \D_j \overline{u} \Big|
\\*
& = & \hspace{30mm} ({\rm I}) \hspace{20mm} + \hspace{25mm} ({\rm II}).
\end{eqnarray*}
For (I) we have
\[
\Big| \sum_{k,l,j=1}^d \int_{\Ri^d} (\D_j \D_l B_a)_{kl} \, (\D_k u) \, \D_j \overline{u} \Big|
\leq d^2 \, \sup_{1 \leq k,l \leq d} \|(B_a)_{kl}\|_{W^{2,\infty}} \, \|\nabla u\|_2^2.
\]
We estimate the term (II) by
\begin{eqnarray*}
\Big| \sum_{k,l,j=1}^d \int_{\Ri^d} (\D_l B_a)_{kl} \, (\D_k \D_j u) \, \D_j \overline{u} \Big|
& = & \Big| \sum_{l,j=1}^d \int_{\Ri^d} \big( (\D_l B_a) \, U \big)_{lj} \, \D_j \overline{u} \Big|
\\
& \leq & \varepsilon \, \sum_{l,j=1}^d \int_{\Ri^d} \big| \big( (\D_l B_a) \, U \big)_{lj} \big|^2
	+ \frac{d}{4 \varepsilon} \, \|\nabla u\|_2^2
\\
& = & \varepsilon \int_{\Ri^d} \|(\D_l B_a) \, U\|_{HS}^2 + \frac{d}{4 \varepsilon} \, \|\nabla u\|_2^2.
\end{eqnarray*}
Hence 
\[
\big| \big( (B_a)^\op u, - \Delta u \big) \big|
\leq \varepsilon \int_{\Ri^d} \|(\D_l B_a) \, U\|_{HS}^2 + M \, \|\nabla u\|_2^2,
\]
where 
\[
M = d^2 \, \sup_{1 \leq k,l \leq d} \|(B_a)_{kl}\|_{W^{2,\infty}} + \frac{d}{4 \varepsilon}
\]
as required.
\end{proof}

\begin{lemm} \label{W Ba op lemma}
For all $\varepsilon > 0$ there exists an $M > 0$ such that
\[
\Big| \int_{\Ri^d} \tr (U \, B_a \, \overline{U})
\leq \varepsilon \int_{\Ri^d} \|(\D_l B_a) \, U\|_{HS}^2
	+ M \, \|\nabla u\|_2^2
\]
for all $u \in C_c^\infty(\Ri^d)$, where $U = (\D_l \D_k u)_{1 \leq k,l \leq d}$.
\end{lemm}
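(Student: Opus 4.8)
The plan is to reduce the left-hand side, via a single integration by parts, to a first-order expression, and then apply a weighted Cauchy--Schwarz inequality; the mechanism exploits both the anti-symmetry of $B_a$ and the symmetry of the Hessian matrix $U=(\D_l\D_k u)_{1\le k,l\le d}$. The starting point is the pointwise identity
\[
\tr(U \, B_a \, \overline{U}) = \sum_{i=1}^d (B_a \, \nabla (\D_i u), \nabla (\D_i u)),
\]
which follows at once from $U_{kl}=\D_l\D_k u$ and the commutativity of partial derivatives, once $\nabla(\D_i u)=(\D_1\D_i u,\ldots,\D_d\D_i u)$ is viewed as a vector in $\Ci^d$.

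Next I would establish, for an arbitrary $w\in C_c^\infty(\Ri^d)$, the first-order representation
\[
\int_{\Ri^d}(B_a\,\nabla w,\nabla w) = -\sum_{k,l=1}^d\int_{\Ri^d} w\,(\D_k(B_a)_{kl})\,(\D_l\overline{w}) - \sum_{k,l=1}^d\int_{\Ri^d} w\,(B_a)_{kl}\,(\D_k\D_l\overline{w}),
\]
obtained by integrating by parts in $x_k$. The crucial observation is that the last sum vanishes identically: for each $x$ the matrix $(\D_k\D_l\overline{w})(x)$ is symmetric in $(k,l)$ while $(B_a)_{kl}(x)$ is anti-symmetric. (The integration by parts is legitimate because $u\in C_c^\infty(\Ri^d)$ and $(B_a)_{kl}\in W^{2,\infty}(\Ri^d)$, so all integrands and the relevant weak derivatives are integrable with compact support.) This cancellation is precisely what makes $\int(B_a\,\nabla w,\nabla w)$ a lower-order quantity, and it is the analogue here of the classical cancellation for real coefficients.

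Applying this with $w=\D_i u$ and summing over $i$ gives
\[
\int_{\Ri^d}\tr(U\,B_a\,\overline{U}) = -\sum_{i,k,l=1}^d\int_{\Ri^d}(\D_i u)\,(\D_k(B_a)_{kl})\,(\D_l\D_i\overline{u}).
\]
Recognising $\D_l\D_i\overline{u}=\overline{U_{il}}$ and using $U^T=U$ (hence $\overline{U}^T=\overline{U}$) to collapse the sum over $l$ into a matrix product, $\sum_l(\D_k B_a)_{kl}\,\overline{U_{il}}=((\D_k B_a)\,\overline{U})_{ki}$, I would arrive at
\[
\int_{\Ri^d}\tr(U\,B_a\,\overline{U}) = -\sum_{i,k=1}^d\int_{\Ri^d}(\D_i u)\,\big((\D_k B_a)\,\overline{U}\big)_{ki}.
\]
Then the elementary inequality $|ab|\le\varepsilon|a|^2+\tfrac{1}{4\varepsilon}|b|^2$, applied to each summand, yields the claim: the term $\varepsilon\sum_{i,k}\int|((\D_k B_a)\overline{U})_{ki}|^2$ is exactly $\varepsilon\int\|(\D_l B_a)\overline{U}\|_{HS}^2$ in the notation of Lemma \ref{W Ba estimate}, and since $B_a$ has real entries this equals $\varepsilon\int\|(\D_l B_a)U\|_{HS}^2$, while the remaining term is $\tfrac{1}{4\varepsilon}\sum_{i,k}\int|\D_i u|^2=\tfrac{d}{4\varepsilon}\|\nabla u\|_2^2$, so one may take $M=\tfrac{d}{4\varepsilon}$.

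I do not expect a real obstacle: the only point needing genuine care is the vanishing of the "second-order" sum through the symmetry/anti-symmetry pairing (which is what renders the whole expression lower order), together with the justification of the integration by parts under merely $W^{2,\infty}$ regularity of $B_a$; the rest is index bookkeeping and the matching of the parameter $\varepsilon$ with the one in the statement.
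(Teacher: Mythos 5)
Your argument is correct (modulo one harmless sign slip noted below) and is a streamlined version of what the paper does; the underlying computation is the same, but you reorganize it to avoid a detour. The paper's proof of this lemma goes through the operator $(B_a)^\op$ and the quantity $((B_a)^\op u,-\Delta u)$: it first derives the identity $((B_a)^\op u,-\Delta u)=-\sum_{k,l,j}\int(\D_l\D_j B_a)_{kl}(\D_k u)\D_j\overline{u}+\int\tr(U B_a\overline{U})$, isolates $\int\tr(U B_a\overline{U})$, and then invokes Lemma~\ref{W Ba estimate} (which itself consists of an integration by parts and a weighted Cauchy--Schwarz applied to $((B_a)^\op u,-\Delta u)$). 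You instead integrate by parts once, directly on $\int\tr(U\,B_a\,\overline{U})$, kill the second-order piece by the anti-symmetry/symmetry pairing, and apply Cauchy--Schwarz; this bypasses $(B_a)^\op$ and Lemma~\ref{W Ba estimate} entirely. If one substitutes the first-order expression from Lemma~\ref{W Ba estimate}'s proof into the paper's identity, the two double-derivative-of-$B_a$ terms cancel and one lands exactly on your first-order representation, so the two arguments are really the same rearranged. Your version is more self-contained and gives the smaller constant $M=d/(4\varepsilon)$.

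One small correction: the pointwise identity should carry a minus sign,
\[
\tr(U\,B_a\,\overline{U}) \;=\; -\sum_{i=1}^d \big(B_a\,\nabla(\D_i u),\,\nabla(\D_i u)\big),
\]
as one checks by relabelling the summation indices and using $(B_a)_{kl}=-(B_a)_{lk}$. (Likewise your subsequent first-order formula is off by a conjugation/sign from the one obtained by integrating by parts on $\D_k\overline{w}$.) Since the lemma estimates the absolute value, none of this affects the conclusion, but it is worth fixing to keep the intermediate identities honest. Finally, note that the expression $\int_{\Ri^d}\|(\D_l B_a)\,U\|_{HS}^2$ in the paper is a slight abuse of notation for $\sum_{l,j}\int|\big((\D_l B_a)U\big)_{lj}|^2$, which is precisely the quantity $\sum_{i,k}\int|\big((\D_k B_a)\overline{U}\big)_{ki}|^2$ your Cauchy--Schwarz produces; you matched it correctly.
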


\begin{proof}
Let $u \in C_c^\infty(\Ri^d)$ and write $U = (\D_l \D_k u)_{1 \leq k,l \leq d}$.
Then
\begin{eqnarray*}
((B_a)^\op u, - \Delta u)
& = & \sum_{k,l,j=1}^d \int_{\Ri^d} \big( \D_l ((B_a)_{kl} \, \D_k u) \big) \, \D_j^2 \overline{u}
\\
& = & - \sum_{k,l,j=1}^d \int_{\Ri^d} \Big( \D_l \big( (\D_j B_a)_{kl} \, \D_k u 
	+ (B_a)_{kl} \, \D_j \D_k u \big) \Big) \, \D_j \overline{u}
\\
& = & - \sum_{k,l,j=1}^d \int_{\Ri^d} (\D_l \D_j B_a)_{kl} \, (\D_k u) \, \D_j \overline{u} 
	+ (\D_j B_a)_{kl} \, (\D_l \D_k u) \, \D_j \overline{u}
	\\
	& & + \sum_{k,l,j=1}^d \int_{\Ri^d} (B_a)_{kl} \, (\D_j \D_k u) \, \D_l \D_j \overline{u}
\\
& = & - \sum_{k,l,j=1}^d \int_{\Ri^d} (\D_l \D_j B_a)_{kl} \, (\D_k u) \, \D_j \overline{u} 
	+ \int_{\Ri^d} \tr (U \, B_a \, \overline{U}),
\end{eqnarray*}
where in the last step we used $\sum_{k,l=1}^d (\D_j B_a)_{kl} \, (\D_l \D_k u) = 0$ for all $j \in \{1, \ldots, d\}$, which follows from the anti-symmetry of $B_a$.

Let $\varepsilon > 0$ and $M$ be as in Lemma \ref{W Ba estimate}.
Then
\begin{eqnarray*}
\Big| \int_{\Ri^d} \tr (U \, B_a \, \overline{U}) \Big|
& \leq & |((B_a)^\op u, - \Delta u)| + \Big| \sum_{k,l,j=1}^d \int_{\Ri^d} (\D_l \D_j B_a)_{kl} \, (\D_k u) \, \D_j \overline{u}  \Big|
\\
& \leq & \varepsilon \int_{\Ri^d} \|(\D_l B_a) \, U\|_{HS}^2
	+ (M + d^2 \, \|B_a\|_{W^{2,\infty}}) \, \|\nabla u\|_2^2,
\end{eqnarray*}
where we used Lemma \ref{W Ba estimate} in the last step.
\end{proof}

\begin{lemm} \label{W commutator}
Let $u \in C_c^\infty(\Ri^d)$.
Then
\begin{eqnarray*}
\R \big( (B_2 - B_2^*)u, - \Delta u \big)
& = & 2 \, \I \sum_{k,l,j=1}^d \int_{\Ri^d} (\D_l \D_j \I C)_{kl} \, (\D_k u) \, \D_j \overline{u}
\\
& & {} + 2 \, \I \sum_{j=1}^d \int_{\Ri^d} \tr ((\D_j \I C) \, U) \, \, \D_j \overline{u}.
\end{eqnarray*}
\end{lemm}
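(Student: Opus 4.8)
The plan is to expand $(B_2 - B_2^*)u$ explicitly as a second-order differential expression, pair it against $-\Delta u$, integrate by parts once to move one derivative off the Laplacian term, and then collect the pieces according to whether they involve two derivatives of $u$ (contracted into the matrix $U = (\D_l \D_k u)_{1 \le k,l \le d}$) or only one. The key observation is that $B_2 - B_2^*$ has coefficients $c_{kl} - \overline{c_{lk}}$, and since $C = \R C + i\,\I C$ with $\R C$ self-adjoint and $\I C$ self-adjoint, the difference $C - C^*$ equals $2i\,\I C$; so $(B_2 - B_2^*)u = -2i \sum_{k,l} \D_l((\I C)_{kl}\,\D_k u)$. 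Taking the inner product with $-\Delta u = -\sum_j \D_j^2 u$ and integrating by parts in the $\D_j$ variable gives
\[
\big( (B_2 - B_2^*)u, -\Delta u \big)
= -2i \sum_{k,l,j=1}^d \int_{\Ri^d} \D_j\big( (\I C)_{kl}\,\D_k u \big)\, \D_j\overline{u}.
\]
Expanding the outer $\D_j$ by the product rule splits this into a term with $(\D_j \I C)_{kl}\,\D_k u$ and a term with $(\I C)_{kl}\,\D_j\D_k u$.

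Next I would handle the genuinely second-order piece. The term $-2i \sum_{k,l,j}\int (\I C)_{kl}\,(\D_j\D_k u)\,\D_j\overline u = -2i \int \tr(U^T\,(\I C)\,\overline U)$ — or the appropriately indexed trace — is, up to conjugation, purely imaginary after one recognises that $\tr((\I C)\,U\,\overline U)$ with $\I C$ self-adjoint and $U$ symmetric has a real part that cancels when one takes $\R$ of the whole expression, because the factor $-2i$ rotates a real quantity to the imaginary axis. Concretely, $\R(-2i\,z) = 2\,\I z$, so each surviving term acquires the form $2\,\I(\cdots)$. The same bookkeeping applied to the first-order-times-second-order term $-2i\sum_{k,l,j}\int (\D_j \I C)_{kl}\,(\D_k u)\,\D_j\overline u$ produces, after taking real parts, exactly $2\,\I\sum_{k,l,j}\int (\D_l\D_j \I C)_{kl}\,(\D_k u)\,\D_j\overline u$ once one further integrates the $\D_j$ by parts to land the extra derivative on $(\D_l \I C)_{kl}$ — this accounts for the mismatch in index pattern between the raw product-rule output and the stated right-hand side. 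The trace term $\tr((\D_j \I C)\,U)$ arises from the portion where the $\D_j$ falls on $\D_l$ inside $(\I C)_{kl}$ and the remaining $\D_k\D_l$ assembles into $U$.

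The main obstacle I anticipate is purely notational rather than conceptual: keeping the index contractions straight through two integration-by-parts steps, and correctly tracking which products land in $\tr((\D_j \I C)\,U)$ versus $(\D_l\D_j \I C)_{kl}\,(\D_k u)\,\D_j\overline u$. One must be careful that all boundary terms vanish (legitimate since $u \in C_c^\infty(\Ri^d)$), that the symmetry/self-adjointness of $\I C$ is used precisely where it kills the real part of the "diagonal" second-order contribution, and that the anti-symmetrisation implicit in passing from $B_2 - B_2^*$ to $2i\,\I C$ is applied consistently. Since all coefficients lie in $W^{2,\infty}(\Ri^d)$, every derivative appearing is a genuine bounded function and no regularity subtlety intervenes. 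Once the algebra is organised, taking $\R$ of both sides and using $\R(-2i\,z) = 2\,\I z$ delivers the claimed identity directly.
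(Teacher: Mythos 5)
Your high-level strategy matches the paper's: expand $(B_2 - B_2^*)u = -2i\sum_{k,l}\D_l\bigl((\I C)_{kl}\,\D_k u\bigr)$, pair against $-\Delta u$, integrate by parts, take $\R$ using $\R(-2iz) = 2\I z$, and observe that the purely second-order trace contribution is real (since $\I C$ is Hermitian and $U$ symmetric) and thus dies under $\R$. However, the computation as written contains a substantive error that prevents the argument from closing.

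After integrating by parts once in $\D_j$, the correct intermediate expression is
\[
-2i\sum_{k,l,j=1}^d\int_{\Ri^d}\D_l\D_j\bigl((\I C)_{kl}\,\D_k u\bigr)\,\D_j\overline u,
\]
with the outer $\D_l$ still present. Your displayed identity drops this $\D_l$, leaving $-2i\sum_{k,l,j}\int\D_j\bigl((\I C)_{kl}\,\D_k u\bigr)\,\D_j\overline u$, in which the sum over $l$ is a free index on $(\I C)_{kl}$ with nothing to contract against — the expression is not even well-formed as a bilinear contraction. This is not a cosmetic slip: the correct computation, after distributing $\D_j$ and then $\D_l$ (and integrating by parts in $\D_l$ on the highest-order piece), produces \emph{three} terms, namely $(\D_l\D_j\I C)_{kl}(\D_k u)\D_j\overline u$, $\tr((\D_j\I C)\,U)\,\D_j\overline u$, and $\tr(U\,(\I C)\,\overline U)$. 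Your version produces only two, and neither is correct. In particular, your candidate for the second-order piece, $\sum_{k,l,j}(\I C)_{kl}(\D_j\D_k u)\D_j\overline u$, involves one second derivative of $u$ but only a \emph{first} derivative of $\overline u$, so it is not of the form $\tr(U^T(\I C)\overline U)$ and is not real; it does not drop out under $\R$. The term that genuinely vanishes in the paper is $2i\,\tr(U\,(\I C)\,\overline U)$, which has two second derivatives and arises from the $\D_l$ you lost.

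Your patch-up — ``further integrat[ing] the $\D_j$ by parts to land the extra derivative on $(\D_l\I C)_{kl}$'' — also does not work: moving the $\D_j$ off $\D_j\overline u$ would land it on $(\D_j\I C)_{kl}$ or on $\D_k u$, giving $(\D_j^2\I C)_{kl}$ or $(\D_j\D_k u)$, never the $(\D_l\D_j\I C)_{kl}$ you need. That term actually comes from the product rule applied to the outer $\D_l$ that was dropped, acting on $(\D_j\I C)_{kl}$. So the missing idea is not conceptual but it is essential: carry the $\D_l$ through, distribute it, and only then integrate by parts on the piece $(\I C)_{kl}\,\D_l\D_j\D_k u$ to generate the symmetric trace term whose reality is what makes the final $\R$-step clean.
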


\begin{proof}
Let $u \in C_c^\infty(\Ri^d)$ and write $U = (\D_l \D_k u)_{1 \leq k,l \leq d}$.
Then
\begin{eqnarray*}
\big( (B_2 - B_2^*)u, - \Delta u \big)
& = & \sum_{k,l,j=1}^d \int_{\Ri^d} \Big( \D_l \big( (c_{kl} - \overline{c_{lk}}) \, \D_k u \big) \Big) \, 
	\D_j^2 \overline{u}
\\
& = & 2i \sum_{k,l,j=1}^d \int_{\Ri^d} \Big( \D_l \big( (\I C)_{kl} \, \D_k u \big) \Big) \, 
	\D_j^2 \overline{u}
\\
& = & - 2i \sum_{k,l,j=1}^d \int_{\Ri^d} \Big( \D_l \big( (\D_j \I C)_{kl} \, \D_k u 
	+ (\I C)_{kl} \, \D_j \D_k u \big) \Big) \, \D_j \overline{u}
\\
& = & - 2i \sum_{k,l,j=1}^d \int_{\Ri^d} \Big( (\D_l \D_j \I C)_{kl} \, (\D_k u) 
	+ (\D_j \I C)_{kl} \, (\D_l \D_k u) \Big) \, \D_j \overline{u}
	\\
	& &
	{} + 2i \sum_{k,l,j=1}^d \int_{\Ri^d} (\I C)_{kl} \, (\D_j \D_k u) \, (\D_l \, \D_j \overline{u})
\\
& = & - 2i \sum_{k,l,j=1}^d \int_{\Ri^d} (\D_l \D_j \I C)_{kl} \, (\D_k u) \, \D_j \overline{u}
	- 2i \, \sum_{j=1}^d \int_{\Ri^d} \tr ((\D_j \I C) \, U) \, \, \D_j \overline{u}
	\\
	& &
	{} + 2i \int_{\Ri^d} \tr (U \, (\I C) \, \overline{U}).
\end{eqnarray*}
Taking the real parts both sides gives the statement since $\tr (U \, (\I C) \, \overline{U}) \in \Ri$.
\end{proof}

\begin{lemm} \label{W tr(DjCU) estimate}
Let $u \in C_c^\infty(\Ri^d)$.
Then
\begin{eqnarray*}
\R \sum_{j=1}^d \int_{\Ri^d} \tr \big( (\D_j C) \, U \big) \, \D_j \overline{u}
& = & \frac{1}{2} \, \R \sum_{k,l,j=1}^d \int_{\Ri^d} (\D_j^2 c_{kl}) \, (\D_l u) \, \D_k \overline{u}
	- 2 \, (\D_k \D_j c_{kl}) \,  (\D_l u) \, \D_j \overline{u}
	\\
	& & {} + \I \sum_{k,l,j=1}^d \int_{\Ri^d} (\D_l \D_j \I C)_{kl} \, (\D_j u) \, \D_k \overline{u} 
	\\
	& & {} + \I \sum_{j=1}^d \int_{\Ri^d} \tr ((\D_j \I C) \, \overline{U}) \, \, \D_j u.
\end{eqnarray*}
\end{lemm}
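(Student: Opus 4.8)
The plan is to expand the trace into a sum over matrix entries, integrate by parts twice in the space variables, and then repeatedly use that each intermediate quantity is a real part (hence real and equal to its own conjugate), that $\I C$ is Hermitian so that $\overline{(\I C)_{kl}} = (\I C)_{lk}$, and that $\overline{c_{kl}} = c_{lk} - 2i\,(\I C)_{lk}$. Concretely, I would first write $\tr((\D_j C)\,U) = \sum_{k,l}(\D_j c_{kl})\,(\D_k\D_l u)$ using $U_{lk} = \D_k\D_l u$, so that the left-hand side equals $\R\sum_{k,l,j}\int_{\Ri^d}(\D_j c_{kl})\,(\D_k\D_l u)\,\D_j\overline{u}$. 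Integrating by parts in $x_k$ (writing $\D_k\D_l u = \D_k(\D_l u)$) turns this into $-(\mathrm{I})-(\mathrm{II})$ with $(\mathrm{I}) := \R\sum_{k,l,j}\int(\D_k\D_j c_{kl})(\D_l u)\D_j\overline{u}$ and $(\mathrm{II}) := \R\sum_{k,l,j}\int(\D_j c_{kl})(\D_l u)\,\D_k\D_j\overline{u}$; a further integration by parts of $(\mathrm{II})$ in $x_j$ gives $(\mathrm{II}) = -(\mathrm{III}) - (\mathrm{IV})$ with $(\mathrm{III}) := \R\sum_{k,l,j}\int(\D_j^2 c_{kl})(\D_l u)\D_k\overline{u}$ and $(\mathrm{IV}) := \R\sum_{k,l,j}\int(\D_j c_{kl})(\D_j\D_l u)\D_k\overline{u}$. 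Note that the first line of the claimed right-hand side is precisely $\frac{1}{2}(\mathrm{III}) - (\mathrm{I})$.

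The heart of the argument is to rewrite $(\mathrm{IV})$. Since $(\mathrm{IV})$ is a real part it equals its own conjugate; conjugating, substituting $\overline{c_{kl}} = c_{lk} - 2i(\I C)_{lk}$, using $\R(-2iw) = 2\,\I(w)$, and relabelling $k\leftrightarrow l$ together with $\D_j\D_k\overline{u} = \D_k\D_j\overline{u}$, one recognises that the contribution coming from $c_{lk}$ is exactly $(\mathrm{II})$. Hence $(\mathrm{IV}) = (\mathrm{II}) + 2E$, where $E := \I\sum_{k,l,j}\int(\D_j(\I C)_{lk})\,(\D_j\D_l\overline{u})\,(\D_k u)$. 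Solving the two relations $(\mathrm{IV}) = (\mathrm{II}) + 2E$ and $(\mathrm{II}) = -(\mathrm{III}) - (\mathrm{IV})$ gives $(\mathrm{IV}) = -\frac{1}{2}(\mathrm{III}) + E$, whence the left-hand side equals $-(\mathrm{I})-(\mathrm{II}) = -(\mathrm{I})+(\mathrm{III})+(\mathrm{IV}) = -(\mathrm{I}) + \frac{1}{2}(\mathrm{III}) + E$. Comparing with the target, it then remains only to identify $E$ with the last two displayed terms.

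For that, I would integrate $E$ once more by parts in $x_l$ (writing $\D_j\D_l\overline{u} = \D_l(\D_j\overline{u})$), splitting $E = E_1 + E_2$ with $E_1 := -\I\sum_{k,l,j}\int(\D_l\D_j(\I C)_{lk})(\D_k u)\D_j\overline{u}$ and $E_2 := -\I\sum_{k,l,j}\int(\D_j(\I C)_{lk})(\D_l\D_k u)\D_j\overline{u}$. In $E_1$, conjugating (using $-\I(w) = \I(\overline{w})$) and applying $\overline{(\I C)_{lk}} = (\I C)_{kl}$ directly produces $\I\sum_{k,l,j}\int(\D_l\D_j\I C)_{kl}(\D_j u)\D_k\overline{u}$, the third term on the right. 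In $E_2$, one first recognises $\sum_{k,l}(\D_j(\I C)_{lk})(\D_l\D_k u) = \tr((\D_j\I C)\,U)$ since $U_{kl} = \D_l\D_k u$; conjugating, and using that $U$ (hence $\overline{U}$) is symmetric because mixed partials commute, together with $\overline{\I C} = (\I C)^T$ (Hermiticity of $\I C$), gives $\overline{\tr((\D_j\I C)\,U)} = \tr((\D_j\I C)\,\overline{U})$, so that $E_2 = \I\sum_j\int\tr((\D_j\I C)\,\overline{U})\,\D_j u$, the fourth term. This yields the identity.

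There is no genuine conceptual difficulty here; the work — and the main obstacle — is purely bookkeeping: tracking the many index relabellings, the commutation of second-order partial derivatives, and the sign changes produced by conjugating the intermediate real quantities (the recurring use of $\R(-2iw) = 2\,\I(w)$ and $-\I(w) = \I(\overline{w})$). The point most vulnerable to a slip is the coefficient $\frac{1}{2}$, which emerges from the step $2(\mathrm{IV}) = -(\mathrm{III}) + 2E$.
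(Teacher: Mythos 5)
Your argument is correct, and it is a genuinely different route from the one in the paper. The paper first computes $(B_2 u, -\Delta u)$ by repeated integration by parts, then isolates $\sum(\D_j c_{kl})(\D_l\D_k u)\D_j\overline{u}$ in terms of $\tfrac{1}{2}\bigl((B_2 u,-\Delta u)-(-\Delta u, B_2^* u)\bigr)$ plus lower-order terms, and finally invokes the separate commutator computation of Lemma~\ref{W commutator} (after replacing $u$ by $\overline{u}$ and taking real parts) to turn $\R\bigl((B_2-B_2^*)\overline{u},-\Delta\overline{u}\bigr)$ into the last two displayed integrals. You dispense with the operator formalism ($B_2$, $B_2^*$, $-\Delta$) and with Lemma~\ref{W commutator} altogether: after two integrations by parts you obtain the same four pieces $(\mathrm{I})$--$(\mathrm{IV})$, and then close the system by the single observation that the real quantity $(\mathrm{IV})$ equals its own conjugate, which via $\overline{c_{kl}}=c_{lk}-2i(\I C)_{lk}$ and an index swap produces $(\mathrm{IV})=(\mathrm{II})+2E$, after which one more integration by parts and the Hermiticity of $\I C$ (together with $U^T=U$) identifies $E$ with the two $\I$-terms. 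I checked the index bookkeeping and conjugations (including $\R(-2iw)=2\I w$, $-\I w=\I\overline{w}$, and $\tr((\D_j\I C)^T\overline{U})=\tr((\D_j\I C)\overline{U})$ via symmetry of $U$) and they go through. What the paper's route buys is modularity: the commutator identity $\R\bigl((B_2-B_2^*)u,-\Delta u\bigr)$ is stated as a lemma reusable elsewhere, and the $B_2$-formalism makes the Hermitian/anti-Hermitian split conceptually visible. What your route buys is self-containment and fewer moving parts: the conjugation trick in the form $(\mathrm{IV})=\overline{(\mathrm{IV})}$ lets you solve a $2\times 2$ linear system for $(\mathrm{IV})$ directly, and the algebraic mechanism (real part equals its conjugate, $\I C$ Hermitian, $U$ symmetric) that the paper hides inside the commutator lemma appears explicitly. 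Both routes, of course, rest on exactly the same underlying structure.
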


\begin{proof}
Let $u \in C_c^\infty(\Ri^d)$ and write $U = (\D_l \D_k u)_{1 \leq k,l \leq d}$.
Then
\begin{eqnarray*}
(B_2 u, - \Delta u)
& = & \sum_{k,l,j=1}^d \int_{\Ri^d} (\D_l (c_{kl} \, \D_k u) \, \D_j^2 u
	= - \sum_{k,l,j=1}^d \int_{\Ri^d} \Big( \D_l ((\D_j c_{kl}) \, \D_k u 
		+ c_{kl} \, \D_j \D_k u) \Big) \, \D_j \overline{u}
\\
& = & \sum_{k,l,j=1}^d \int_{\Ri^d} ((\D_j c_{kl}) \, \D_k u 
	+ c_{kl} \, \D_j \D_k u) \, \D_l \D_j \overline{u}
\\
& = & - \sum_{k,l,j=1}^d \int_{\Ri^d} \Big( (\D_j^2 c_{kl}) \, \D_k u + (\D_j c_{kl}) \, \D_j \D_k u
	+ (\D_j c_{kl}) \, \D_j \D_k u + c_{kl} \, \D_j^2 \D_k u \Big) \, \D_l \overline{u}
\\
& = & - \sum_{k,l,j=1}^d \int_{\Ri^d} (\D_j^2 c_{kl}) \, (\D_k u) \, \D_l \overline{u}
	+ 2 \, (\D_j c_{kl}) \, (\D_j \D_k u) \, \D_l \overline{u} 
	- (\D_j^2 u) \, \D_k (c_{kl} \, \D_l \overline{u})
\\
& = & -\sum_{k,l,j=1}^d \int_{\Ri^d} (\D_j^2 c_{kl}) \, (\D_k u) \, \D_l \overline{u}
	- 2 \, (\D_j u) \Big( (\D_k \D_j c_{kl}) \,  \D_l \overline{u} 
	+ (\D_j c_{kl}) \, (\D_k \D_l \overline{u}) \Big)
	\\
	& & {} + (-\Delta u, B_2^* u).
\end{eqnarray*}
Hence 
\begin{eqnarray*}
\sum_{k,l,j=1}^d \int_{\Ri^d} (\D_j c_{kl}) \, (\D_l \D_k \overline{u}) \, \D_j u
& = & \frac{1}{2} \sum_{k,l,j=1}^d \int_{\Ri^d} (\D_j^2 c_{kl}) \, (\D_k u) \, \D_l \overline{u}
	- 2 \, (\D_k \D_j c_{kl}) \,  (\D_l \overline{u}) \, (\D_j u)
	\\
	& & {} + \frac{1}{2} \, \Big( (B_2 u, - \Delta u) - (-\Delta u, B_2^* u) \Big).
\end{eqnarray*}
Replacing $u$ by $\overline{u}$ in the above equation and taking the real parts on both sides gives
\begin{eqnarray*}
\R \sum_{j=1}^d \int_{\Ri^d} \tr \big( (\D_j C) \, U \big) \, \D_j \overline{u}
& = & \R \sum_{k,l,j=1}^d \int_{\Ri^d} (\D_j c_{kl}) \, (\D_l \D_k u) \, \D_j \overline{u}
\\
& = & \frac{1}{2} \, \R \sum_{k,l,j=1}^d \int_{\Ri^d} (\D_j^2 c_{kl}) \, (\D_k \overline{u}) \, \D_l u
	- 2 \, (\D_k \D_j c_{kl}) \,  (\D_l u) \, (\D_j \overline{u})
	\\*
	& & {} + \frac{1}{2} \, \R \big( (B_2 - B_2^*) \overline{u}, - \Delta \overline{u} \big).
\end{eqnarray*}
Using Lemma \ref{W commutator} we yield the result.
\end{proof}

\begin{prop} \label{W Z m-accretive}
Suppose one of the following holds.
\begin{tabeleq}
\item The matrix $B_s$ has constant entries.
\item There exist $\theta_1, \theta_2 \in [0, \frac{\pi}{2})$, $\phi \in W^{2, \infty}(\Ri^d)$ and a $d \times d$ matrix $\widetilde{C}$ with entries in $W^{2, \infty}(\Ri^d)$ such that $\theta = \theta_1 + \theta_2$, $\phi(x) \in \Sigma_{\theta_1}$ for all $x \in \Ri^d$, $\widetilde{C}$ takes values in $\Sigma_{\theta_2}$ and $C = \phi \, \widetilde{C}$. 
Write $\widetilde{C} = \widetilde{R} + i \, \widetilde{B}$, where $\widetilde{R}$ and $\widetilde{B}$ are $d \times d$ matrix-valued functions with real-valued entries.
Set $\widetilde{R}_s = \frac{1}{2} \, (\widetilde{R} + \widetilde{R}^T)$.
Also define $\R \widetilde{C} = \frac{1}{2} \, \big( \widetilde{C} + (\widetilde{C})^* \big)$.
Suppose further that there exists an $h > 0$ such that 
\[
\tr (U \, (\R \widetilde{C}) \, \overline{U}) \geq h \, \tr (U \, \widetilde{R}_s \, \overline{U})
\]
for all $u \in C_c^\infty(\Ri^d)$, where $U = (\D_l \D_k u)_{1 \leq k,l \leq d}$.
\item There exists an $M > 0$ such that $\|(\D_l B_a) \, U\|_{HS}^2 \leq M \, \tr (U \, R_s \, \overline{U})$ for all $l \in \{1, \ldots, d\}$ and $u \in C_c^\infty(\Ri^d)$, where $U = (\D_l \D_k u)_{1 \leq k,l \leq d}$.
\end{tabeleq}
Then $Z$ is $m$-accretive.
\end{prop}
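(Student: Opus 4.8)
The plan is to follow Step~1 of the proof of Proposition~\ref{W Cc dense 2}, with the gradient estimate of Proposition~\ref{W 2nd ineq} replaced by a new one that holds under each of (i)--(iii). By Proposition~\ref{W Z description} the operator $Z$ is closed and accretive, so $\|(\lambda + Z)u\|_2 \geq \lambda\,\|u\|_2$ for $\lambda > 0$ and $\mathrm{Range}(\lambda + Z)$ is closed; hence it suffices to produce one $\lambda > 0$ with $W^{1,2}(\Ri^d) \subseteq \mathrm{Range}(\lambda + Z)$, and then $Z$ is $m$-accretive. Since $W^{1,2}(\Ri^d)$ is dense in $L_2(\Ri^d)$, this will give $\mathrm{Range}(\lambda + Z) = L_2(\Ri^d)$.

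The heart of the argument is the following gradient estimate: under each of (i)--(iii) there is $M \geq 0$ with $\R(B_2 u, -\Delta u) \geq -M\,\|\nabla u\|_2^2$ for all $u \in W^{2,2}(\Ri^d)$. Since $C_c^\infty(\Ri^d)$ is dense in $W^{2,2}(\Ri^d)$ and both sides are continuous on $W^{2,2}(\Ri^d)$, it is enough to treat $u \in C_c^\infty(\Ri^d)$. For such $u$, writing $U = (\D_l\D_k u)_{1 \leq k,l \leq d}$ (so $U^T = U$ and $\overline U\,U = U^*U \geq 0$) and integrating by parts twice as in the proofs of Lemmas~\ref{W commutator} and~\ref{W tr(DjCU) estimate} (the two resulting first--order terms cancel), one obtains $(B_2 u, -\Delta u) = \mathrm{(A)} + \mathrm{(C)} + \mathrm{(D)}$, where $\mathrm{(A)}$ collects the terms carrying a second derivative of a coefficient, hence $|\mathrm{(A)}| \leq M\|\nabla u\|_2^2$; where $\mathrm{(C)} = -\sum_j \int \tr((\D_j C)\,U)\,\D_j\overline u$; and where $\R\,\mathrm{(D)} = \int \tr((\R C)\,\overline U\,U) \geq 0$ because $\R C \geq 0$. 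By Lemma~\ref{W tr(DjCU) estimate}, Corollary~\ref{W Oleinik 2}(b) (applicable since $U^T = U$) and Young's inequality one gets $|\R\,\mathrm{(C)}| \leq \varepsilon\int\tr(U R_s\overline U) + M_\varepsilon\|\nabla u\|_2^2$ for every $\varepsilon > 0$. Under hypothesis~(i) the term of $\R\,\mathrm{(C)}$ containing $\tr((\D_j\I C)\overline U)$ in fact vanishes: constancy of $B_s$ forces $\D_j\I C = -i\,\D_j R_a$, and $\tr((\D_j R_a)\overline U) = 0$ by anti-symmetry of $R_a$ and symmetry of $U$; so under~(i), $|\R\,\mathrm{(C)}| \leq M'\|\nabla u\|_2^2$ and the estimate follows at once from $\R\,\mathrm{(D)} \geq 0$. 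Under~(ii) and~(iii) one needs in addition a coercivity bound $\int\tr((\R C)\overline U U) \geq c\int\tr(U R_s\overline U) - M''\|\nabla u\|_2^2$ with $c > 0$: writing $\R C = R_s + i B_a$ gives $\int\tr((\R C)\overline U U) = \int\tr(U R_s\overline U) + \int i\,\tr(U B_a\overline U)$, and under~(iii) one controls $|\int\tr(U B_a\overline U)|$ via Lemma~\ref{W Ba op lemma} and the hypothesis $\|(\D_l B_a)U\|_{HS}^2 \leq M\tr(U R_s\overline U)$; under~(ii) one factors $C = \phi\,\widetilde C$, absorbs the $\phi$-derivatives into the $O(\|\nabla u\|_2^2)$ remainder (using $\phi \in W^{2,\infty}$), and then compares $\int\tr((\R C)\overline U U)$ with $\int\tr(U R_s\overline U)$ using the sectoriality of $\phi$ ($|\I\phi| \leq \tan\theta_1\,\R\phi$), the sectoriality of $\widetilde C$, the strict inequality $\theta_1 + \theta_2 < \tfrac\pi2$, the assumed bound $\tr(U(\R\widetilde C)\overline U) \geq h\,\tr(U\widetilde R_s\overline U)$, and Lemma~\ref{W Bs < Rs} applied to $\widetilde C$. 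Choosing $\varepsilon$ small in each case yields the gradient estimate.

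Granting this, the surjectivity goes through as in Proposition~\ref{W Cc dense 2}. Let $M$ be the constant for $B_2$, let $M'$ be the (trivial) constant for $-\Delta$, and put $\lambda = M + M' + 1$. For $n \in \Ni$ set $B_{2,n} = B_2 - \tfrac1n\Delta$ on $W^{2,2}(\Ri^d)$; this operator is strongly elliptic, hence closed, accretive (since $\R(B_2 v, v) = \int((\R C)\nabla v, \nabla v) \geq 0$), and $(\lambda + B_{2,n})$ maps $W^{2,2}(\Ri^d)$ onto $L_2(\Ri^d)$. Given $f \in W^{1,2}(\Ri^d)$, take $u_n \in W^{2,2}(\Ri^d)$ with $(\lambda + B_{2,n})u_n = f$. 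Testing against $u_n$ and using accretivity gives $\|u_n\|_2 \leq \|f\|_2/\lambda$; testing against $-\Delta u_n$, using the gradient estimate for $B_{2,n}$ (the extra term $\tfrac1n\|\Delta u_n\|_2^2 \geq 0$) and for $-\Delta$, and then H\"older's inequality gives $\|\nabla u_n\|_2 \leq \|\nabla f\|_2$. Thus $\{u_n\}$ is bounded in $W^{1,2}(\Ri^d)$ and $\{B_{2,n}u_n\} = \{f - \lambda u_n\}$ in $L_2(\Ri^d)$; passing to a subsequence, $u_n \rightharpoonup u$ weakly in $W^{1,2}(\Ri^d)$ and $B_{2,n}u_n \rightharpoonup v = f - \lambda u$ weakly in $L_2(\Ri^d)$. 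For $\phi \in C_c^\infty(\Ri^d)$ one has $(B_{2,n}u_n, \phi) = (u_n, H_2\phi - \tfrac1n\Delta\phi) \to (u, H_2\phi)$, whence $u \in D(B_2)$ and $B_2 u = v$. So $u \in W^{1,2}(\Ri^d) \cap D(B_2) \subseteq D(Z)$ by Proposition~\ref{W Z description}, and $(\lambda + Z)u = f$. Therefore $W^{1,2}(\Ri^d) \subseteq \mathrm{Range}(\lambda + Z) = L_2(\Ri^d)$ and $Z$ is $m$-accretive.

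The main obstacle is the gradient estimate, and inside it the treatment of the Hessian-level quantities $\tr((\D_j\I C)\overline U)$ and $\int\|(\D_l B_a)U\|_{HS}^2$: these are exactly the places where degeneracy blocks a naive bound by $\|\nabla u\|_2^2$, where the Oleinik-type inequalities of Section~2 (Proposition~\ref{W Oleinik}, Corollary~\ref{W Oleinik 2}) and the anti-symmetry identities (Lemmas~\ref{W Ba op lemma}, \ref{W commutator}, \ref{W tr(DjCU) estimate}) have to be combined, and where hypotheses (i), (ii), (iii) genuinely differ; the remaining steps---the double integration by parts and the weak-compactness argument---are routine.
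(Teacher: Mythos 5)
Your proposal is correct, and the central analytic content — the decomposition $(Zu,-\Delta u) = (\mathrm A)+(\mathrm C)+(\mathrm D)$ (the paper's (I), (II), (III)), the use of Lemma~\ref{W tr(DjCU) estimate} and the vanishing of $\tr((\D_j\I C)\,\overline U)$ under (i), the identity $\tr(U(\R C)\overline U)=\tr(UR_s\overline U)+i\tr(UB_a\overline U)$ and Lemma~\ref{W Ba op lemma} under (iii), and the Oleinik/Young step to get $|\R(\mathrm C)|\le\varepsilon\int\tr(UR_s\overline U)+M_\varepsilon\|\nabla u\|_2^2$ — matches the paper. You differ from the paper in two genuine respects, both legitimate. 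First, once the gradient estimate $\R(Zu,-\Delta u)\ge-M\|\nabla u\|_2^2$ on $W^{2,2}(\Ri^d)$ is in hand, the paper simply invokes \cite[Theorem 1.50]{Ouh5} to deduce $m$-accretivity, whereas you rerun the viscosity-approximation argument of Step~1 of Proposition~\ref{W Cc dense 2}: set $B_{2,n}=B_2-\tfrac1n\Delta$, solve $(\lambda+B_{2,n})u_n=f$, get uniform $W^{1,2}$-bounds on $u_n$ by pairing with $u_n$ and with $-\Delta u_n$, extract a weak limit $u\in W^{1,2}\cap D(B_2)\subset D(Z)$, and identify $(\lambda+Z)u=f$. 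That is more self-contained but re-derives what the cited theorem packages. Second, in case (ii) you propose to compare (III) directly with $\tr(UR_s\overline U)$, and then use the single $\varepsilon$-bound on (C) coming from Corollary~\ref{W Oleinik 2}(a). The paper instead splits (II) into $({\rm IIa})$ (the $\D_j\phi$ part) and $({\rm IIb})$ (the $\phi\,\D_j\widetilde C$ part), estimates $({\rm IIa})$ via Lemmas~\ref{W f sectorial} and~\ref{W HS norm lemma}, estimates $({\rm IIb})$ via Corollary~\ref{W Oleinik 2}(a) applied to $\widetilde C$, and works with the natural quantity $\int(\R\phi)\tr(U\widetilde R_s\overline U)$. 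Your shortcut works: Lemma~\ref{W Bs < Rs} applied to $\widetilde C$ gives $|\tr(U\widetilde B_s\overline U)|\le\tan\theta_2\,\tr(U\widetilde R_s\overline U)$, hence $\tr(UR_s\overline U)=(\R\phi)\tr(U\widetilde R_s\overline U)-(\I\phi)\tr(U\widetilde B_s\overline U)\le(1+\tan\theta_1\tan\theta_2)(\R\phi)\tr(U\widetilde R_s\overline U)$, and combining with $\tr(U(\R C)\overline U)\ge(1-\tan\theta_1\tan\theta_2)h\,(\R\phi)\tr(U\widetilde R_s\overline U)$ gives $(\mathrm{III})\ge c\int\tr(UR_s\overline U)$ with $c=\frac{(1-\tan\theta_1\tan\theta_2)h}{1+\tan\theta_1\tan\theta_2}>0$, exactly the coercivity you need to absorb your $\varepsilon$-bound on (C). This avoids Lemmas~\ref{W f sectorial} and~\ref{W HS norm lemma} altogether, which is a mild simplification of the paper's Case~2.

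One phrase in your write-up should be dropped or rewritten: under (ii) you say you ``absorb the $\phi$-derivatives into the $O(\|\nabla u\|_2^2)$ remainder.'' No $\phi$-derivatives occur in (III), which is the term you are bounding at that point; and in (II) the term $\sum_j\int(\D_j\phi)\,\tr(\widetilde C\,U)\,\D_j\overline u$ contains the Hessian-level factor $\tr(\widetilde C\,U)$ and is \emph{not} $O(\|\nabla u\|_2^2)$ — it must be absorbed into the positive (III) term, either via the paper's Lemmas~\ref{W f sectorial} and~\ref{W HS norm lemma} or, as you actually do, swallowed together with the rest of (II) by the Oleinik bound for $C$. Since your estimate for (C) handles this correctly, the stray phrase is harmless, but as written it is false.
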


\begin{proof}
By Proposition \ref{W Z description} we have that that $D(-\Delta) = W^{2,2}(\Ri^d) \subset D(Z)$.
We will show that there exists a $\beta \in \Ri$ such that 
\begin{equation} \label{W T1.50}
\R (Zu, - \Delta u) \geq - \beta \, \|\nabla u\|_2^2
\end{equation}
for all $u \in D(-\Delta) = W^{2,2}(\Ri^d)$.
It then follows from \cite[Theorem 1.50]{Ouh5} that $Z$ is $m$-accretive.
Since $C_c^\infty(\Ri^d)$ is dense in $W^{2,2}(\Ri^d)$ and is a core for $Z$, it suffices to show that \eqref{W T1.50} holds for all $u \in C_c^\infty(\Ri^d)$.

Let $u \in C_c^\infty(\Ri^d)$ and $U = (\D_l \D_k u)_{1 \leq k,l \leq d}$.
Using integration by parts we obtain
\begin{eqnarray*}
(Zu, - \Delta u)
& = & \sum_{k,l,j=1}^d \int_{\Ri^d} \big( \D_l (c_{kl} \, \D_k u) \big) \, \D_j^2 \overline{u}
	= - \sum_{k,l,j=1}^d \int_{\Ri^d} \Big( \D_l \big( (\D_j c_{kl}) \, (\D_k u) 
	+ c_{kl} \, (\D_j \D_k u) \big) \Big) \, \D_j \overline{u}
\\
& = & - \sum_{k,l,j=1}^d \int_{\Ri^d} (\D_l \D_j c_{kl}) \, (\D_k u) \, \D_j \overline{u}
	+ (\D_j c_{kl}) \, (\D_l \D_k u) \, \D_j \overline{u} - c_{kl} \, (\D_j \D_k u) \, \D_l \D_j \overline{u}
\\
& = & - \sum_{k,l,j=1}^d \int_{\Ri^d} (\D_l \D_j c_{kl}) \, (\D_k u) \, \D_j \overline{u}
	- \sum_{j=1}^d \int_{\Ri^d} \tr \big( (\D_j C) \, U \big) \, \D_j \overline{u}
	+ \int_{\Ri^d} \tr (U \, C \, \overline{U}).
\end{eqnarray*}
Therefore
\begin{eqnarray*}
\R (Zu, - \Delta u)
& = & - \R \sum_{k,l,j=1}^d \int_{\Ri^d} (\D_l \D_j c_{kl}) \, (\D_k u) \, \D_j \overline{u}
	- \R \sum_{j=1}^d \int_{\Ri^d} \tr \big( (\D_j C) \, U \big) \, \D_j \overline{u}
	\\
	& & {} + \int_{\Ri^d} \tr \big( U \, (\R C) \, \overline{U} \big)
\\
& = & ({\rm I}) + ({\rm II}) + ({\rm III}).
\end{eqnarray*}
The estimate for (I) is straightforward as
\begin{equation} \label{S4.6 term (I)}
({\rm I}) 
\geq - \sum_{k,l,j=1}^d \int_{\Ri^d} \big| (\D_l \D_j c_{kl}) \, (\D_k u) \, \D_j \overline{u} \big|
\geq - M_1 \|\nabla u\|_2^2,
\end{equation}
where $M_1 = d^2 \, \sup_{1 \leq k,l \leq d} \|c_{kl}\|_{W^{2,\infty}}$.
The estimates for (II) and (III) are more involved.
We consider three cases according to the three conditions (i), (ii) and (iii) imposed above.
\\
{\bf Case 1:} Suppose (i) holds.
\\
Since $U = U^T$ and $R_a = - R_a^T$, we have
\[
\tr \big( (\D_j R_a) \, U \big)
= \tr \big( U^T \, (\D_j R_a)^T \big)
= - \tr \big( U \, (\D_j R_a) \big)
= - \tr \big( (\D_j R_a) \, U \big).
\]
Therefore $\tr \big( (\D_j R_a) \, U \big) = 0$.
This implies
\[
\tr \big( (\D_j \I C) \, U \big) 
= \tr \big( (\D_j B_s) \, U \big) - i \, \tr \big( (\D_j R_a) \, U \big)
= \tr \big( (\D_j B_s) \, U \big) 
= 0,
\]
where the last equality follows from the hypothesis.
Using Lemma \ref{W tr(DjCU) estimate} we obtain that
\[
({\rm II}) = \sum_{k,l,j=1}^d \int_{\Ri^d} \R \Big( \frac{1}{2} \, (\D_j^2 c_{kl}) \, (\D_l u) \, \D_k \overline{u}
	- (\D_k \D_j c_{kl}) \,  (\D_l u) \, \D_j \overline{u} \Big)
	+ \I \Big( (\D_l \D_j \I C)_{kl} \, (\D_j u) \, \D_k \overline{u} \Big).
\]
Consequently
\[
({\rm II}) \geq - M_2 \, \|\nabla u\|_2^2,
\]
where $M_2 = 3 \, d^2 \, \sup_{1 \leq k,l \leq d} \|c_{kl}\|_{W^{2,\infty}}$.
Note that $({\rm III}) \geq 0$.
Hence 
\[
\R (Zu, - \Delta u) \geq - (M_1 + M_2) \, \|\nabla u\|_2^2.
\]
%
{\bf Case 2:} Suppose (ii) holds.
\\
We first consider (II).
We have
\begin{eqnarray*}
({\rm II})
& = & - \R \sum_{j=1}^d \int_{\Ri^d} \tr \big( \D_j (\phi \, \widetilde{C}) \, U \big) \, \D_j \overline{u}
\\
& = & - \R \sum_{j=1}^d \int_{\Ri^d} (\D_j \phi) \, \tr (\widetilde{C} \, U) \, \D_j \overline{u}
	- \R \sum_{j=1}^d \int_{\Ri^d} \phi \, \tr \big( (\D_j \widetilde{C}) \, U \big) \, \D_j \overline{u}
\\
& = & \hspace{25mm} ({\rm IIa}) \hspace{20mm} + \hspace{25mm} ({\rm IIb}).
\end{eqnarray*}
Let 
\[
M_3 = 64 \, d \, (1 + \tan\theta_1)^2 \, (1 + \tan\theta_2)^2 \, 
		\|\widetilde{R}_s\|_\infty \, \sup_{1 \leq j \leq d} \|\D_j^2 \phi\|_\infty
\]
and 
\[
M_4 = 32 \, d^2 \, (1 + \tan\theta_1) \, (1 + \tan\theta_2)^2 \, 
		\sup_{1 \leq j \leq d} \|\D_j^2 \widetilde{C}\|_\infty.
\]
Let
\[
\varepsilon = \frac{(1 - \tan\theta_1 \, \tan\theta_2) \, h}{4 \, (M_3 \vee M_4 \vee 1)}.
\]
Note that $\varepsilon > 0$ as $1 - \tan\theta_1 \, \tan\theta_2 > 0$.
Indeed, if $\tan\theta = 0$ then $\theta = \theta_1 = \theta_2 = 0$, which implies $1 - \tan\theta_1 \, \tan\theta_2 = 1 > 0$.
If $\tan\theta > 0$ then $1 - \tan\theta_1 \, \tan\theta_2 = \frac{\tan\theta_1 + \tan\theta_2}{\tan\theta} > 0$.

For (IIa) we estimate
\begin{eqnarray*}
({\rm IIa})
& \geq & - \varepsilon \int_{\Ri^d} \Big( \sum_{j=1}^d |\D_j \phi|^2 \Big) \, |\tr (\widetilde{C} \, U)|^2
	- \frac{1}{4 \varepsilon} \, \|\nabla u\|_2^2.
\end{eqnarray*}
Note that 
\[
|\D_j \phi|^2 \leq 4 \, (1 + \tan\theta_1)^2 \, \sup_{1 \leq j \leq d} \|\D_j^2 \phi\|_\infty \, \R \phi
\]
for all $j \in \{1\, \ldots, d\}$ by Lemma \ref{W f sectorial}.
Moreover,
\[
|\tr (\widetilde{C} \, U)|^2
\leq d \, \|\widetilde{C} \, U\|_{HS}^2
\leq 16 \, d \, (1 + \tan\theta_2)^2 \, \|\widetilde{R}_s\|_\infty \, \tr (U \, \widetilde{R}_s \, \overline{U}),
\]
where we used Lemma \ref{W HS norm lemma} in the last step.
Consequently
\begin{eqnarray}
({\rm IIa})
& \geq & -\varepsilon \, M_3 \int_{\Ri^d} (\R \phi) \, \tr (U \, \widetilde{R}_s \, \overline{U})
	- \frac{1}{4 \varepsilon} \, \|\nabla u\|_2^2
\nonumber
\\
& \geq & - \frac{(1 - \tan\theta_1 \, \tan\theta_2) \, h}{4} \, \int_{\Ri^d} (\R \phi) \, \tr (U \, \widetilde{R}_s \, \overline{U})
	- \frac{1}{4 \varepsilon} \, \|\nabla u\|_2^2.
\label{S4.6 term (IIa)}
\end{eqnarray}

For (IIb) we estimate as follows.
Since $\phi(x) \in \Sigma_{\theta_1}$ for all $x \in \Ri^d$, we have 
\[
|\phi| \leq |\R \phi| + |\I \phi| \leq (1 + \tan\theta_1) \, \R \phi.
\]
Therefore
\begin{eqnarray}
({\rm IIb})
& \geq & - \sum_{j=1}^d \int_{\Ri^d} |\phi| \, \big| \tr \big( (\D_j \widetilde{C}) \, U \big) \big| \, 
	|\D_j \overline{u}|
\nonumber
\\
& \geq & - (1 + \tan\theta_1) \, \sum_{j=1}^d \int_{\Ri^d} (\R \phi) \, 
	\Big( \varepsilon \, \big| \tr \big( (\D_j \widetilde{C}) \, U \big) \big|^2 
	+ \frac{1}{4 \varepsilon} \, |\D_j \overline{u}|^2 \Big)
\nonumber
\\
& \geq & - \varepsilon \, (1 + \tan\theta_1) \, \sum_{j=1}^d \int_{\Ri^d} (\R \phi) \, 
	\big| \tr \big( (\D_j \widetilde{C}) \, U \big) \big|^2 
	- \frac{(1+\tan\theta_1) \, \|\phi\|_\infty}{4 \varepsilon} \, \|\nabla u\|_2^2
\nonumber
\\
& \geq & - \varepsilon \, M_4 \, \int_{\Ri^d} (\R \phi) \, \tr (U \, \widetilde{R}_s \, \overline{U})
	- \frac{(1+\tan\theta_1) \, \|\phi\|_\infty}{4 \varepsilon} \, \|\nabla u\|_2^2
\nonumber
\\
& \geq & - \frac{(1 - \tan\theta_1 \, \tan\theta_2) \, h}{4} \, \int_{\Ri^d} (\R \phi) \, \tr (U \, \widetilde{R}_s \, \overline{U})
	- \frac{(1+\tan\theta_1) \, \|\phi\|_\infty}{4 \varepsilon} \, \|\nabla u\|_2^2,
\hspace*{7mm}
\label{S4.6 term (IIb)}
\end{eqnarray}
where we used Corollary \ref{W Oleinik 2}(a) in the fourth step.

On the other hand, estimating (III) gives
\[
({\rm III}) 
= \int_{\Ri^d} \tr \big( U \, (\R (\phi \, \widetilde{C})) \, \overline{U} \big)
= \int_{\Ri^d} (\R \phi) \, \tr (U \, (\R \widetilde{C}) \, \overline{U})
	- (\I \phi) \, \tr (U \, (\I \widetilde{C}) \, \overline{U}).
\]
Since $\phi(x) \in \Sigma_{\theta_1}$ for all $x \in \Ri^d$, we have $|\I \phi| \leq (\tan\theta_1) \, \R \phi$.
Also as $\widetilde{C}$ takes values in $\Sigma_{\theta_2}$, we deduce that $|(\I (\widetilde{C} \, U e_j, U e_j)| \leq (\tan\theta_2) \, \R (\widetilde{C} \, U e_j, U e_j)$, which in turns implies that $|\tr (U \, (\I \widetilde{C}) \, \overline{U})| \leq (\tan\theta_2) \, \tr (U \, (\R \widetilde{C}) \, \overline{U})$.
Therefore
\begin{eqnarray}
({\rm III}) 
& \geq & \int_{\Ri^d} (1 - \tan\theta_1 \, \tan\theta_2) \, 
	(\R \phi) \, \tr (U \, (\R \widetilde{C}) \, \overline{U})
\nonumber
\\
& \geq & \int_{\Ri^d} (1 - \tan\theta_1 \, \tan\theta_2) \, h \, 
	(\R \phi) \, \tr (U \, \widetilde{R}_s \, \overline{U})
\label{S4.6 term (III)}
\end{eqnarray}
by the hypothesis.
Hence by \eqref{S4.6 term (I)}, \eqref{S4.6 term (IIa)}, \eqref{S4.6 term (IIb)} and \eqref{S4.6 term (III)} we have
\begin{eqnarray*}
\R (Zu, - \Delta u) 
& \geq & \frac{(1 - \tan\theta_1 \, \tan\theta_2) \, h}{2} \int_{\Ri^d} (\R \phi) \, \tr (U \, \widetilde{R}_s \, \overline{U})
	\\
	& & {} - \big( M_1 + \frac{1+(1+\tan\theta_1) \, \|\phi\|_\infty}{4 \varepsilon} \big) \, \|\nabla u\|_2^2
\\
& \geq & - \big( M_1 + \frac{1+(1+\tan\theta_1) \, \|\phi\|_\infty}{4 \varepsilon} \big) \, \|\nabla u\|_2^2.
\end{eqnarray*}
{\bf Case 3:} Suppose (iii) holds.
\\
Let $\varepsilon_1 = \frac{1}{2M}$ and $M'$ be corresponding to $\varepsilon_1$ as in Lemma \ref{W Ba op lemma}.
Then
\begin{eqnarray*}
({\rm III}) 
& = & \int_{\Ri^d} \tr (U \, R_s \, \overline{U}) + i \int_{\Ri^d} \tr (U \, B_a \, \overline{U})
\\
& \geq & \int_{\Ri^d} \tr (U \, R_s \, \overline{U}) 
	- \varepsilon_1 \int_{\Ri^d} \|(\D_l B_a) \, U\|_{HS}^2 - M' \, \|\nabla u\|_2^2
\\
& \geq & \frac{1}{2} \int_{\Ri^d} \tr (U \, R_s \, \overline{U}) - M' \, \|\nabla u\|_2^2
\end{eqnarray*}
since $\|(\D_l B_a) \, U\|_{HS}^2 \leq M \, \tr (U \, R_s \, \overline{U})$ by hypothesis.

Let $\varepsilon_2 = \frac{1}{4dM''}$, where $M''$ is the constant as in Corollary \ref{W Oleinik 2}(a).
Then
\[
({\rm II}) 
\geq - \varepsilon_2 \, \sum_{j=1}^d \int_{\Ri^d} \big| \tr \big( (\D_j C) \, U \big) \big|^2
	- \frac{1}{4 \varepsilon_2} \, \|\nabla u\|_2^2
\geq - \frac{1}{4} \, \int_{\Ri^d} \tr (U \, R_s \, \overline{U})
	- \frac{1}{4 \varepsilon_2} \, \|\nabla u\|_2^2,
\]
where we used Corollary \ref{W Oleinik 2}(a) in the last step.
Hence
\[
\R (Zu, - \Delta u) 
\geq \frac{1}{4} \int_{\Ri^d} \tr (U \, R_s \, \overline{U}) 
	- (\frac{1}{4\varepsilon_2} + M_1 + M') \, \|\nabla u\|_2^2.
\]
The proof is complete.
\end{proof}

We emphasise that it is not known yet whether $B_2$ is accretive if $B_a \neq 0$.
The following theorem is of main interest and will be used extensively.

\begin{thrm} \label{W main theorem L2}
Suppose one of the following holds.
\begin{tabeleq}
\item \label{Bs=0} The matrix $B_s$ has constant entries.
\item \label{C=phi C tilde} There exist $\theta_1, \theta_2 \in [0, \frac{\pi}{2})$, $\phi \in W^{2, \infty}(\Ri^d)$ and a $d \times d$ matrix $\widetilde{C}$ with entries in $W^{2, \infty}(\Ri^d)$ such that $\theta = \theta_1 + \theta_2$, $\phi(x) \in \Sigma_{\theta_1}$ for all $x \in \Ri^d$, $\widetilde{C}$ takes values in $\Sigma_{\theta_2}$ and $C = \phi \, \widetilde{C}$. 
Write $\widetilde{C} = \widetilde{R} + i \, \widetilde{B}$, where $\widetilde{R}$ and $\widetilde{B}$ are $d \times d$ matrix-valued functions with real-valued entries.
Set $\widetilde{R}_s = \frac{1}{2} \, (\widetilde{R} + \widetilde{R}^T)$.
Also define $\R \widetilde{C} = \frac{1}{2} \, \big( \widetilde{C} + (\widetilde{C})^* \big)$.
Suppose further that there exists an $h > 0$ such that 
\[
\tr (U \, (\R \widetilde{C}) \, \overline{U}) \geq h \, \tr (U \, \widetilde{R}_s \, \overline{U})
\]
for all $u \in C_c^\infty(\Ri^d)$, where $U = (\D_l \D_k u)_{1 \leq k,l \leq d}$.
\item \label{HS norm on Ba} There exists an $M > 0$ such that $\|(\D_l B_a) \, U\|_{HS}^2 \leq M \, \tr (U \, R_s \, \overline{U})$ for all $l \in \{1, \ldots, d\}$ and $u \in C_c^\infty(\Ri^d)$, where $U = (\D_l \D_k u)_{1 \leq k,l \leq d}$.
\end{tabeleq}
Then $A = B_2 = Z$.
Moreover, $C_c^\infty(\Ri^d)$ is a core for $A$.
\end{thrm}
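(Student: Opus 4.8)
The plan is to reduce the theorem to the $m$-accretivity statement of Proposition \ref{W Z m-accretive} together with an elementary duality argument, so that no new estimates are needed here. Throughout I use that $A$ is $m$-sectorial (hence $m$-accretive) and that on $C_c^\infty(\Ri^d)$ the operators $A$ and $B_2$ both act as the classical divergence-form operator $u \mapsto -\sum_{k,l=1}^d \D_l(c_{kl}\,\D_k u)$, since $W^{2,2}(\Ri^d) \subset D(A) \cap D(B_2)$.

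First I would record the inclusion $Z \subset A$: from $B_2|_{C_c^\infty(\Ri^d)} = A|_{C_c^\infty(\Ri^d)}$ and the closedness of $A$ we get $Z = \overline{B_2|_{C_c^\infty(\Ri^d)}} = \overline{A|_{C_c^\infty(\Ri^d)}} \subset A$. By Proposition \ref{W Z m-accretive}, under whichever of \ref{Bs=0}, \ref{C=phi C tilde}, \ref{HS norm on Ba} is assumed, $Z$ is $m$-accretive, while $A$ is $m$-accretive as well. From $Z \subset A$ with both operators $m$-accretive one deduces $A = Z$ in the standard way: for $\lambda > 0$ the map $\lambda + Z$ is onto $L_2(\Ri^d)$, so given $u \in D(A)$ there is $v \in D(Z) \subset D(A)$ with $(\lambda + Z)v = (\lambda + A)u$; then $(\lambda + A)(v-u) = 0$, and since $\lambda + A$ is injective by accretivity, $u = v \in D(Z)$, whence $D(A) \subset D(Z)$.

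It remains to identify $B_2$ with $A$, and here I would use that the conjugate transpose $C^*$ satisfies the same hypotheses as $C$. Writing $C = R + iB$ and $C^* = R^T - iB^T$, the symmetric part of the real part of $C^*$ is again $R_s$, the antisymmetric part of its imaginary part is again $B_a$, and the symmetric part of its imaginary part is $-B_s$; hence conditions \ref{Bs=0} and \ref{HS norm on Ba} are unchanged, and \ref{C=phi C tilde} is preserved via the factorisation $C^* = \overline{\phi}\,\widetilde{C}^*$ with $\overline{\phi}$ taking values in $\Sigma_{\theta_1}$, $\widetilde{C}^*$ taking values in $\Sigma_{\theta_2}$, and $\R\widetilde{C}^* = \R\widetilde{C}$. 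Now the operator $H_2$ (which has domain $C_c^\infty(\Ri^d)$) is precisely the classical divergence-form operator associated with $C^*$, and the form operator attached to $C^*$ is $A^*$; so applying the previous paragraph with $C$ replaced by $C^*$ gives $\overline{H_2} = A^*$. Therefore $B_2 = (H_2)^* = (\overline{H_2})^* = (A^*)^* = A$, using that $A$ is densely defined and closed. Combining with the previous step, $A = B_2 = Z$, and since $Z = \overline{A|_{C_c^\infty(\Ri^d)}}$ this shows $C_c^\infty(\Ri^d)$ is a core for $A$.

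The genuinely hard part is upstream, in Proposition \ref{W Z m-accretive}: the lower bound $\R(Zu,-\Delta u) \geq -\beta\|\nabla u\|_2^2$, which via \cite[Theorem 1.50]{Ouh5} yields the $m$-accretivity of $Z$. Within the present assembly the only points requiring care are (a) verifying that \ref{Bs=0}--\ref{HS norm on Ba} really are invariant under $C \mapsto C^*$, so that the duality step is legitimate, and (b) checking that $Z$, defined as a closure of a divergence-form operator on test functions, coincides with the closure of $H_2$ once the roles of $C$ and $C^*$ are swapped — both of which are bookkeeping rather than analysis.
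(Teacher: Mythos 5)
Your proof is correct, and it takes a genuinely different (and in some ways cleaner) route than the paper's, even though both hinge on Proposition \ref{W Z m-accretive} and the observation that $C^*$ satisfies the same hypotheses as $C$.

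The paper first establishes $Z = B_2$: it uses $m$-accretivity of $Z$ to solve $(I+Z)v = (I+B_2)u$, then deduces $u = v$ from the orthogonality $(u-v,(I+H_2)\phi)=0$ and the density of $\mathrm{ran}(I+H_2)$, which follows from the $m$-accretivity of $\overline{H_2}$; it then concludes $A = B_2$ by comparing the two $m$-accretive operators $A \subset B_2$. You instead close the loop at $A$ right away: $Z = \overline{A|_{C_c^\infty(\Ri^d)}} \subset A$, and since both are $m$-accretive, $Z = A$, which already gives the core property. You then recover $B_2 = A$ through the clean adjoint chain $B_2 = (H_2)^* = (\overline{H_2})^* = (A^*)^* = A$, using that $\overline{H_2}$ is exactly the $Z$-operator built from $C^*$ and that the form operator of $C^*$ is $A^*$. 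Your route avoids having to verify the inclusion $A \subset B_2$ directly (which the paper leaves implicit), and replaces the density-of-range step by a one-line biduality computation; the price is that you invoke the standard form-theoretic fact that the operator associated with the adjoint form $\gota^*$ (coefficients $C^*$) is $A^*$, which deserves a citation to \cite[Theorem VI.2.5]{Kat1}. Your check that each of (i)--(iii) is preserved under $C \mapsto C^*$ (using $R_s$, $B_a$, $\R\widetilde C$, $\widetilde R_s$ invariant, $B_s \mapsto -B_s$) is correct and is the same symmetry the paper uses when asserting that $H_2$ ``satisfies the same criteria.''
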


\begin{proof} 
By Proposition \ref{W Z m-accretive} the operator $Z$ is $m$-accretive.
We will show that $Z = B_2$.
Clearly $Z \subset B_2$.
For the reverse inclusion let $u \in D(B_2)$.
Then $(I + B_2) u \in L_2(\Ri^d)$.
Since $Z$ is $m$-accretive, there exists a $v \in D(Z)$ such that $(I + Z) v = (I + B_2) u$.
But $B_2$ is an extension of $Z$.
Therefore $(I + B_2) v = (I + B_2) u$.
Let $\phi \in C_c^\infty(\Ri^d)$.
Then
\[
0 = ((I + B_2)(u-v), \phi) = (u-v, (I + H_2) \phi),
\]
where $H_2$ is defined by \eqref{W Hq}.
But $H_2$ satisfies the same criteria as those of $B_2|_{C_c^\infty(\Ri^d)}$.
Therefore analogous arguments give that $\overline{H_2}$ is also $m$-accretive.
Consequently $u = v$.
Hence $Z = B_2$.
It follows that $B_2$ is $m$-accretive.
In particular $B_2$ is accretive.
Note that $A$ is $m$-accretive and $A \subset B_2$.
Therefore we must have $A = B_2 = Z$.
Moreover, since $C_c^\infty(\Ri^d)$ is a core for $Z$, it is also a core for $A$.
\end{proof}

The next proposition provides three easy criteria to verify Condition \ref{HS norm on Ba} in Theorem \ref{W main theorem L2}.

\begin{prop}
Suppose $C$ satisfies one of the following.
\begin{tabel}
\item There exists an $r \in \Ri \setminus \{0\}$ such that $R_s + i r \, \D_l B_a \geq 0$ for all $l \in \{1, \ldots, d\}$.
\item The matrices $R_s$ and $\D_l B_a$ commute for all $l \in \{1, \ldots, d\}$.
\item There exist a real-valued function $\phi \in W^{2, \infty}(\Ri^d)$ which satisfies $\phi \geq 0$ and a $d \times d$ matrix $\widetilde{C}$ which has constant entries and takes values in $\Sigma_{\theta}$ such that $C = \phi \, \widetilde{C}$. 
\end{tabel}
Then there exists an $M  > 0$ such that $\|(\D_l B_a) \, U\|_{HS}^2 \leq M \, \tr (U \, R_s \, \overline{U})$ for all $l \in \{1, \ldots, d\}$ and $u \in C_c^\infty(\Ri^d)$, where $U = (\D_l \D_k u)_{1 \leq k,l \leq d}$.
\end{prop}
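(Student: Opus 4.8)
The plan is to reduce the asserted matrix inequality to a pointwise inequality in $\xi\in\Ci^d$ and then to verify that pointwise inequality under each of the three hypotheses. Since $U=U^T$ we have $\tr(U\,R_s\,\overline U)=\sum_{j=1}^d(R_s\,Ue_j,Ue_j)$ and $\|(\D_l B_a)\,U\|_{HS}^2=\sum_{j=1}^d\|(\D_l B_a)\,Ue_j\|^2$, so it suffices to produce, for each $l\in\{1,\ldots,d\}$, a constant $M_l>0$ with $\|(\D_l B_a)\,\xi\|^2\le M_l\,(R_s\,\xi,\xi)$ for all $\xi\in\Ci^d$; the conclusion then follows by putting $\xi=Ue_j$, summing over $j$, and taking $M=\max_{1\le l\le d}M_l$. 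By Lemma \ref{W Q xi square general} it is in fact enough, in cases (a) and (c), to find an $M_l'>0$ with $|((\D_l B_a)\,\xi,\xi)|\le M_l'\,(R_s\,\xi,\xi)$ for all $\xi$.

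For case (a): conjugating the entries of $R_s+ir\,\D_l B_a\ge0$ (all of $r$, $R_s$, $\D_l B_a$ being real) shows $R_s-ir\,\D_l B_a\ge0$ as well, both matrices being Hermitian because $i\,\D_l B_a$ is Hermitian. Writing $\xi=a+ib$ with $a,b\in\Ri^d$, the anti-symmetry of $\D_l B_a$ gives $((\D_l B_a)\xi,\xi)=-2i\,((\D_l B_a)a,b)$, which is purely imaginary, so $ir\,((\D_l B_a)\xi,\xi)$ is real; evaluating the two positivity inequalities at $\xi$ and combining them yields $|((\D_l B_a)\xi,\xi)|\le|r|^{-1}(R_s\xi,\xi)$, and Lemma \ref{W Q xi square general} finishes the case. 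For case (b): $R_s$ and $i\,\D_l B_a$ are commuting Hermitian matrices, hence admit a common orthonormal eigenbasis $\{w_k\}$; applying Lemma \ref{W Dl Ba < Rs} with $\xi=w_k$ and using that $w_k$ is an eigenvector of both matrices shows that the squared modulus of the $\D_l B_a$-eigenvalue at $w_k$ is at most $2\,\sup_{1\le m\le d}\|\D_m^2C\|_\infty$ times the (nonnegative) $R_s$-eigenvalue at $w_k$; summing over the eigenbasis gives $\|(\D_l B_a)\xi\|^2\le 2\,\sup_{1\le m\le d}\|\D_m^2C\|_\infty\,(R_s\xi,\xi)$ for all $\xi$.

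For case (c): since $\phi$ is real-valued, $C=\phi\,\widetilde C$ forces $R_s=\phi\,\widetilde R_s$ and $B_a=\phi\,\widetilde B_a$, and because $\widetilde B_a$ has constant entries, $\D_l B_a=(\D_l\phi)\,\widetilde B_a$. Lemma \ref{W f} applied to $f=\phi\ge0$ gives $|\D_l\phi|^2\le2\,\|\D_l^2\phi\|_\infty\,\phi$, while $\widetilde C$ (being constant and taking values in $\Sigma_\theta$) satisfies the standing hypothesis, so Lemma \ref{W Ba < Rs} together with Lemma \ref{W Q xi square general} applied to $\widetilde C$ yields $\|\widetilde B_a\,\xi\|^2\le4\,\|\widetilde R_s\|_\infty\,(\widetilde R_s\,\xi,\xi)$. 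Multiplying these two estimates and using $(R_s\xi,\xi)=\phi\,(\widetilde R_s\xi,\xi)$ gives $\|(\D_l B_a)\xi\|^2\le8\,\|\D_l^2\phi\|_\infty\,\|\widetilde R_s\|_\infty\,(R_s\xi,\xi)$. In all three cases the reduction of the first paragraph then completes the proof.

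The main obstacle is the degeneracy of $R_s$: the form $(R_s\,\cdot,\cdot)$ may vanish on a nontrivial subspace, so mere boundedness of $\D_l B_a$ is worthless and one genuinely needs $\D_l B_a$ to annihilate the null directions of $R_s$; each of (a), (b), (c) is a way of guaranteeing this. Case (b) is the most delicate, because the hypothesis is structural (commutativity) rather than an explicit inequality, and the point of invoking Lemma \ref{W Dl Ba < Rs} on the joint eigenvectors is precisely to force the vanishing of $\D_l B_a$ on $\ker R_s$.
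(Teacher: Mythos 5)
Your proof is correct, and it follows the same case-by-case structure and invokes the same key lemmas (Lemma~\ref{W Q xi square general}, Lemma~\ref{W Dl Ba < Rs}, Lemma~\ref{W f}, Lemma~\ref{W Ba < Rs}) as the paper's, but your reduction to a pointwise vector inequality is genuinely cleaner. You observe directly that it suffices to have $\|(\D_l B_a)\,\xi\|^2\le M\,(R_s\,\xi,\xi)$ for all $\xi\in\Ci^d$ and then substitute $\xi=U e_j$, sum over $j$, and use $\sum_j(R_s\,Ue_j,Ue_j)=\tr(U^*R_sU)=\tr(UR_s\overline{U})$ (the last equality from $U=U^T$ and $R_s$ real symmetric). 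The paper arrives at the same estimate by first passing through the polar decomposition $U=V|U|$ and the auxiliary matrix $P=\sqrt{UU^*}=VD_PV^*$, rewriting $\|(\D_lB_a)U\|_{HS}^2$ as $\sum_k\|(\D_lB_a)Ve_k\|^2\,(D_P)_{kk}^2$, applying the pointwise bound at $\xi=Ve_k$, and then unwinding via $\tr(R_sP^2)$; your route dispenses with $P$, $V$, $D_P$ entirely. The simplification is most visible in case~(b): the paper has to reconcile the eigenbasis $V$ of $P$ with the eigenbasis $W$ that simultaneously diagonalises $\D_lB_a$ and $R_s$, whereas you simply derive the pointwise bound $\|(\D_lB_a)\,\xi\|^2\le 2\,\sup_m\|\D_m^2C\|_\infty\,(R_s\,\xi,\xi)$ from the common spectral decomposition of the two commuting Hermitian matrices $R_s$ and $i\,\D_lB_a$ and then feed it into the generic reduction. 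Your cases~(a) and~(c) produce the same constants as the paper's (in~(a), the conjugation trick giving $R_s-ir\,\D_lB_a\ge0$ and the purely imaginary nature of $((\D_lB_a)\xi,\xi)$ are exactly what is needed; in~(c), $\D_lB_a=(\D_l\phi)\widetilde B_a$ because $\widetilde C$ is constant, and the factorisation $(R_s\xi,\xi)=\phi\,(\widetilde R_s\xi,\xi)$ closes the estimate). Your closing remark correctly pinpoints the structural role of all three hypotheses, namely to force $\D_lB_a$ to vanish on $\ker R_s$.
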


\begin{proof}
Let $l \in \{1, \ldots, d\}$.
Let $u \in C_c^\infty(\Ri^d)$ and $U = (\D_l \D_k u)_{1 \leq k,l \leq d}$.

We first deal with (a) and (b).
Set $P = \sqrt{U \, U^*} \geq 0$.
Let $V$ be a unitary matrix such that $P = V \, D_P \, V^*$, where $D_P$ is a positive diagonal matrix.
Then

\pagebreak[1]

\begin{eqnarray*}
\|(\D_l B_a) \, U\|_{HS}^2
& = & - \tr (U^* \, (\D_l B_a)^2 \, U)
	= - \tr ((\D_l B_a)^2 \, P^2)
	= - \tr ((\D_l B_a)^2 \, V \, D_P^2 \, V^*)
\\
& = & - \tr (V^* \, (\D_l B_a)^2 \, V \, D_P^2)
	= \sum_{k=1}^d |(V^* \, (\D_l B_a)^2 \, V)_{kk}| \, |(D_P)_{kk}|^2.
\end{eqnarray*}
We consider two cases.
\\
{\bf Case 1:} Suppose (a) holds.
\\
Then $|((\D_l B_a) \, \xi, \xi)| \leq \frac{1}{|r|} \, (R_s \, \xi, \xi)$ for all $\xi \in \Ci^d$.
By Lemma \ref{W Q xi square general} we have $\|(\D_l B_a) \, \xi\|^2 \leq \frac{4}{r^2} \, \|R_s\|_\infty \, (R_s \, \xi, \xi)$ for all $\xi \in \Ci^d$.
In particular $\|(\D_l B_a) \, V e_k\|^2 \leq \frac{4}{r^2} \, \|R_s\|_\infty \, (V^* \, R_s \, V)_{kk}$ for all $k \in \{1, \ldots, d\}$.
It follows that 
\begin{eqnarray*}
\|(\D_l B_a) \, U\|_{HS}^2
& \leq & \frac{4}{r^2} \, \|R_s\|_\infty \, \sum_{k=1}^d (V^* \, R_s \, V)_{kk} \, |(D_P)_{kk}|^2
	= \frac{4}{r^2} \, \|R_s\|_\infty \, \tr (V^* \, R_s \, V \, D_P^2)
\\
& = & \frac{4}{r^2} \, \|R_s\|_\infty \, \tr (R_s \, P^2)
	= \frac{4}{r^2} \, \|R_s\|_\infty \, \tr (U^* \, R_s \, U)
\\
& = & \frac{4}{r^2} \, \|R_s\|_\infty \, \tr (U \, R_s \, U^*)
= \frac{4}{r^2} \, \|R_s\|_\infty \, \tr (U \, R_s \, \overline{U}),
\end{eqnarray*}
where the last equality follows from the fact that $U = U^T$.
\\
{\bf Case 2:} Suppose (b) holds.
\\
Let $W$ be a unitary matrix such that $\D_l B_a = W \, D \, W^*$, where $D$ is diagonal.
Therefore
\[
|D_{kk}|^2 
= |(W^* \, (\D_l B_a) \, W)_{kk}|^2 
\leq 2 \, \sup_{1 \leq l \leq d} \|\D_l^2 C\|_\infty \, (W^* \, R_s \, W)_{kk}
\]
for all $k \in \{1, \ldots, d\}$ by Lemma \ref{W Dl Ba < Rs}.
Since $R_s$ and $\D_l B_a$ commute, we may assume without loss of generality that the matrix $W$ also diagonalises $R_s$.
It follows that
\begin{eqnarray*}
|(V^* \, (\D_l B_a)^2 \, V)_{kk}|
& = & |(V^* \, W \, D^2 \, W^* \, V)_{kk}|
	= |((W^* \, V)^* \, D^2 \, W^* \, V)_{kk}|
\\
& = & \sum_{j=1}^d \big( (W^* \, V)^* \big)_{kj} \, |D_{jj}|^2 \, (W^* \, V)_{jk}
\\
& \leq & 2 \, \sup_{1 \leq l \leq d} \|\D_l^2 C\|_\infty \, 
	\sum_{j=1}^d \big( (W^* \, V)^* \big)_{kj} \, (W^* \, R_s \, W)_{jj} \, (W^* \, V)_{jk}
\\
& = & 2 \, \sup_{1 \leq l \leq d} \|\D_l^2 C\|_\infty \, (V^* \, R_s \, V)_{kk}
\end{eqnarray*}
for all $k \in \{1, \ldots, d\}$.
Hence
\begin{eqnarray*}
\|(\D_l B_a) \, U\|_{HS}^2
& \leq & 2 \, \sup_{1 \leq l \leq d} \|\D_l^2 C\|_\infty \, 
	\sum_{k=1}^d (V^* \, R_s \, V)_{kk} \, |(D_P)_{kk}|^2
\\
& = & 2 \, \sup_{1 \leq l \leq d} \|\D_l^2 C\|_\infty \, \tr (V^* \, R_s \, V \, D_P^2)
	=2 \, \sup_{1 \leq l \leq d} \|\D_l^2 C\|_\infty \, \tr (R_s \, P^2)
\\*
& = & 2 \, \sup_{1 \leq l \leq d} \|\D_l^2 C\|_\infty \, \tr (U \, R_s \, \overline{U}).
\end{eqnarray*}
This completes the proof of the proposition under the assumptions (a) and (b).

Next we turn to (c).
Suppose (c) holds.
Write $\widetilde{C} = \widetilde{R} + i \, \widetilde{B}$.
Set $\widetilde{R}_s = \frac{1}{2} \, (\widetilde{R} + \widetilde{R}^T)$ and $\widetilde{B}_a = \frac{1}{2} \, (\widetilde{B} - \widetilde{B}^T)$.
Since $\phi$ is real-valued, we have $R_s = \phi \, \widetilde{R}_s$ and $B_a = \phi \, \widetilde{B}_a$.
Applying Lemma \ref{W f} to $\phi$ we obtain $(\D_l \phi)^2 \leq 2 \, \|\phi\|_{W^{2,\infty}} \, \phi$.
By Lemmas \ref{W Ba < Rs} and \ref{W Q xi square general} we also have $\|\widetilde{B}_a \, \xi\|^2 \leq 4 \, \|\widetilde{R}_s\|_\infty \, (\widetilde{R}_s \, \xi, \xi)$ for all $\xi \in \Ci^d$.
Therefore
\begin{eqnarray*}
\|(\D_l B_a) \, U\|_{HS}^2
& = & \sum_{j=1}^d \|(\D_l B_a) \, U \, e_j\|_2^2
	= (\D_l \phi)^2 \, \sum_{j=1}^d \|\widetilde{B}_a \, U \, e_j\|_2^2
\\
& \leq & 8 \, \|\phi\|_{W^{2,\infty}} \, \|\widetilde{R}_s\|_\infty \, \phi	\, \sum_{j=1}^d (\widetilde{R}_s \, U \, e_j, U \, e_j)
	= 8 \, \|\phi\|_{W^{2,\infty}} \, \|\widetilde{R}_s\|_\infty \, \tr(U \, R_s \, \overline{U}).
\end{eqnarray*}
The proof is complete.
\end{proof}

Our next aim is to show that if $D(A) \subset W^{1,2}(\Ri^d)$, then $C_c^\infty(\Ri^d)$ is a core for $A$.

\begin{lemm} \label{W A mollifier lemma}
Suppose $D(A) \subset W^{1,2}(\Ri^d)$.
Then
\[
\sum_{k,l=1}^d \int_{\Ri^d} c_{kl} \, \eta \, (\D_k u) \, \D_l \overline{\phi}
= \Big( \eta \, Au - \sum_{k,l=1}^d c_{kl} \, (\D_k u) \, \D_l \eta, \phi \Big)
\]
for all $u \in D(A)$ and $\eta, \phi \in C_c^\infty(\Ri^d)$.
\end{lemm}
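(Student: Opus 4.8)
The plan is to first establish the integral representation
\[
(Au, w) = \sum_{k,l=1}^d \int_{\Ri^d} c_{kl} \, (\D_k u) \, \D_l \overline{w}
\]
for every $u \in D(A)$ and every $w \in C_c^\infty(\Ri^d)$, and then to obtain the stated identity by specialising to $w = \overline{\eta}\,\phi$ and applying the Leibniz rule to $\D_l \overline{w}$.

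For the representation I would argue as follows. Fix $u \in D(A)$ and $w \in C_c^\infty(\Ri^d)$. Since $D(A) \subset D(\overline{\gota_0})$ and $C_c^\infty(\Ri^d) = D(\gota_0) \subset D(\overline{\gota_0})$, we have $(Au, w) = \overline{\gota_0}(u, w)$. Choose $(u_n) \subset C_c^\infty(\Ri^d)$ with $u_n \to u$ in $L_2(\Ri^d)$ that is Cauchy in the form norm of $\gota_0$; by construction of the closure, $\overline{\gota_0}(u, w) = \lim_{n\to\infty} \gota_0(u_n, w)$. Because $c_{kl} \in W^{2,\infty}(\Ri^d)$ and $w$ has compact support, the function $c_{kl}\,\D_l\overline{w}$ lies in $W^{2,\infty}(\Ri^d)$ and has compact support, so integration by parts in $x_k$ gives $\gota_0(u_n, w) = - \sum_{k,l} \int_{\Ri^d} u_n \, \D_k(c_{kl}\,\D_l\overline{w})$; letting $n \to \infty$ and using $u_n \to u$ in $L_2(\Ri^d)$ yields $\overline{\gota_0}(u, w) = - \sum_{k,l} \int_{\Ri^d} u \, \D_k(c_{kl}\,\D_l\overline{w})$. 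At this point the hypothesis $D(A) \subset W^{1,2}(\Ri^d)$ enters: since $u \in W^{1,2}(\Ri^d)$ we may integrate by parts once more to obtain $\overline{\gota_0}(u, w) = \sum_{k,l} \int_{\Ri^d} c_{kl}\,(\D_k u)\,\D_l\overline{w}$, which is the claimed representation.

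With the representation in hand I would set $w = \overline{\eta}\,\phi \in C_c^\infty(\Ri^d)$. On the left, $(Au, \overline{\eta}\,\phi) = \int_{\Ri^d} (Au)\,\eta\,\overline{\phi} = (\eta\,Au, \phi)$. On the right, $\D_l \overline{w} = \D_l(\eta\,\overline{\phi}) = (\D_l\eta)\,\overline{\phi} + \eta\,\D_l\overline{\phi}$, so the representation becomes
\[
(\eta\,Au, \phi) = \sum_{k,l=1}^d \int_{\Ri^d} c_{kl}\,(\D_k u)\,(\D_l\eta)\,\overline{\phi} + \sum_{k,l=1}^d \int_{\Ri^d} c_{kl}\,\eta\,(\D_k u)\,\D_l\overline{\phi}.
\]
Recognising the first sum on the right as $\big(\sum_{k,l=1}^d c_{kl}\,(\D_k u)\,\D_l\eta,\ \phi\big)$ and rearranging yields exactly the asserted identity.

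The only non-routine point is the integral representation. Since $A$ may be degenerate, the form-norm Cauchy sequence $(u_n)$ need not converge to $u$ in $W^{1,2}(\Ri^d)$, so one cannot pass to the limit directly in $\sum_{k,l}\int c_{kl}(\D_k u_n)\D_l\overline{w}$. The device that makes it work is to integrate by parts first, moving all derivatives onto the smooth compactly supported test function, take the $L_2$-limit there, and only afterwards use the extra regularity $u \in W^{1,2}(\Ri^d)$ to integrate back. Everything else is the Leibniz rule and the (conjugate-)linearity of the $L_2$ inner product.
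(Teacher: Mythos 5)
Your proof is correct and follows essentially the same route as the paper: write $(\eta\,Au,\phi) = (Au,\overline{\eta}\,\phi) = \gota(u,\overline{\eta}\,\phi)$, express this as $\sum_{k,l}\int c_{kl}\,(\D_k u)\,\D_l(\eta\,\overline{\phi})$, and split by the Leibniz rule. The paper asserts the middle equality $\gota(u,\overline{\eta}\,\phi)=\sum_{k,l}\int c_{kl}\,(\D_k u)\,\D_l(\eta\,\overline{\phi})$ without comment, whereas you correctly observe that this requires justification because the closure $\overline{\gota_0}$ of a degenerate form need not have an a priori integral representation on its full domain; your device of integrating by parts onto the fixed test function, passing to the $L_2$-limit, and integrating back using $u\in W^{1,2}(\Ri^d)$ is exactly the right way to supply it and is a genuine improvement in rigour over the paper's terse presentation.
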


\begin{proof}
Let $u \in D(A)$ and $\eta, \phi \in C_c^\infty(\Ri^d)$.
Then
\begin{eqnarray*}
(\eta \, Au, \phi)
& = & (Au, \overline{\eta} \, \phi)
	= \gota(u, \overline{\eta} \, \phi)
	= \sum_{k,l=1}^d \int_{\Ri^d} c_{kl} \, (\D_k u) \, \D_l (\eta \, \overline{\phi})
\\
& = & \sum_{k,l=1}^d \int_{\Ri^d} c_{kl} \, (\D_k u) \, (\D_l \eta) \, \overline{\phi}
	+ \sum_{k,l=1}^d \int_{\Ri^d} c_{kl} \, (\D_k u) \, \eta \, \D_l \overline{\phi}.
\end{eqnarray*}
Next we rearrange the terms to derive the lemma.
\end{proof}

Recall that $J_n$ is the usual mollifier with respect to a suitable function in $C_c^\infty(\Ri^d)$ for all $n \in \Ni$.

\begin{prop} \label{W A mollifier}
Suppose $D(A) \subset W^{1,2}(\Ri^d)$.
Then $C_c^\infty(\Ri^d)$ is a core for $A$ if and only if $\lim_{n \to \infty} A(J_n * u) = Au$ in $L_2(\Ri^d)$ for all $u \in D(A)$.
\end{prop}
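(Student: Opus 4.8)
The plan is to prove the two implications of the equivalence separately, using the cut-off functions $\tau_n$ together with Lemma~\ref{W A mollifier lemma} for the ``if'' part, and the operator $B_2$ together with the commutator machinery of Section~\ref{S4.3} for the ``only if'' part.

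For the ``if'' direction, assume $\lim_{n\to\infty}A(J_n*v)=Av$ in $L_2(\Ri^d)$ for every $v\in D(A)$. I would first show that each $\tau_n\,u$ lies in $D(A)$ and that $\tau_n\,u\to u$ in $D(A)$ for every $u\in D(A)$. Membership in the form domain of $\gota$ is standard (it is invariant under multiplication by a fixed $C_c^\infty(\Ri^d)$-function), and for $\phi\in C_c^\infty(\Ri^d)$ one expands $\D_k(\tau_n\,u)=\tau_n\,\D_k u+u\,\D_k\tau_n$ in $\gota(\tau_n\,u,\phi)=\sum_{k,l}\int c_{kl}\,\D_k(\tau_n\,u)\,\D_l\overline\phi$: the first contribution is rewritten by Lemma~\ref{W A mollifier lemma} with $\eta=\tau_n$, and the second, after an integration by parts in $x_l$, equals $(g_n,\phi)$ with
\[
g_n=-\sum_{k,l=1}^d\Big((\D_l c_{kl})\,u\,(\D_k\tau_n)+c_{kl}\,(\D_l u)\,(\D_k\tau_n)+c_{kl}\,u\,(\D_l\D_k\tau_n)\Big)\in L_2(\Ri^d).
\]
Since $C_c^\infty(\Ri^d)$ is a core for $\gota$ and both sides are continuous in $\phi$ for the form norm, this yields $\tau_n\,u\in D(A)$ with $A(\tau_n\,u)=\tau_n\,Au-\sum_{k,l}c_{kl}(\D_k u)\D_l\tau_n+g_n$. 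Using $\|\D_k\tau_n\|_\infty\le\frac1n\|\nabla\tau\|_\infty$, $\|\D_l\D_k\tau_n\|_\infty\le\frac1{n^2}\|\nabla^2\tau\|_\infty$ and dominated convergence one gets $\tau_n\,u\to u$ in $D(A)$. Finally, applying the hypothesis to $v=\tau_n\,u$ gives $A(J_m*(\tau_n\,u))\to A(\tau_n\,u)$ in $L_2(\Ri^d)$ as $m\to\infty$, while $J_m*(\tau_n\,u)\to\tau_n\,u$ in $L_2(\Ri^d)$ and $J_m*(\tau_n\,u)\in C_c^\infty(\Ri^d)$; a diagonal argument then shows $C_c^\infty(\Ri^d)$ is a core for $A$.

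For the ``only if'' direction, assume $C_c^\infty(\Ri^d)$ is a core for $A$. The crucial ingredient is an $n$-uniform bound on $A(J_n*w)$ for $w\in D(A)$. Since $D(A)\subset W^{1,2}(\Ri^d)$ and $A$ is closed, the closed graph theorem gives a constant $C_I$ with $\|w\|_{W^{1,2}}\le C_I(\|w\|_2+\|Aw\|_2)$ for all $w\in D(A)$; moreover $(Au,\phi)=\gota(u,\phi)=(u,H_2\phi)$ for $\phi\in C_c^\infty(\Ri^d)$, so $D(A)\subset D(B_2)$ and $B_2=A$ on $D(A)$ (with $B_2$ as in Section~\ref{S4.3}), exactly as in the proof of Proposition~\ref{W Ap = Bp}. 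For $w\in D(A)$ we have $J_n*w\in W^{2,2}(\Ri^d)\subset D(A)$, hence $A(J_n*w)=B_2(J_n*w)$, and the identity \eqref{W mollifying difference} --- whose derivation uses only $w\in D(B_2)\cap W^{1,2}(\Ri^d)$, the compact support in Proposition~\ref{W smoothen} serving solely to force $T_nw\to0$ --- together with Lemma~\ref{W Tn} and $\|J_n*(B_2w)\|_2\le\|B_2w\|_2$ gives
\[
\|A(J_n*w)\|_2\le\|B_2w\|_2+\|T_nw\|_2\le\|Aw\|_2+C_T\,\|w\|_{W^{1,2}}\le C\,(\|w\|_2+\|Aw\|_2),
\]
with $C_T=\sup_m\|T_m\|$ and $C$ independent of $n$ and $w$. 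With the uniform bound in hand, fix $u\in D(A)$ and $\varepsilon>0$; by the core hypothesis choose $\phi\in C_c^\infty(\Ri^d)$ with $\|u-\phi\|_2+\|Au-A\phi\|_2<\varepsilon$, and write
\[
A(J_n*u)-Au=A\big(J_n*(u-\phi)\big)+\big(A(J_n*\phi)-A\phi\big)+(A\phi-Au).
\]
The first term has $L_2$-norm at most $C\varepsilon$ by the uniform bound, the third is smaller than $\varepsilon$, and the second tends to $0$ as $n\to\infty$ because $J_n*\phi\to\phi$ together with all its derivatives and with supports in a fixed compact set, so $A(J_n*\phi)=-\sum_{k,l}\D_l(c_{kl}\,\D_k(J_n*\phi))\to A\phi$ uniformly. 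Hence $\limsup_n\|A(J_n*u)-Au\|_2\le(C+1)\varepsilon$ for every $\varepsilon>0$, which gives $\lim_nA(J_n*u)=Au$ in $L_2(\Ri^d)$.

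The step I expect to be the main obstacle is the $n$-uniform bound in the ``only if'' part: written out, $A(J_n*w)=-\sum_{k,l}\big((\D_l c_{kl})(J_n*\D_k w)+c_{kl}((\D_l J_n)*\D_k w)\big)$ contains a term whose naive $L_2$-norm grows like $n\|\nabla w\|_2$, so one must extract the cancellation encoded in \eqref{W mollifying difference}, and one must also know both that $D(A)\hookrightarrow W^{1,2}(\Ri^d)$ is continuous and that $A$ and $B_2$ agree on $D(A)$ --- all three facts resting on the standing hypothesis $D(A)\subset W^{1,2}(\Ri^d)$. The ``if'' direction is more routine, the only mildly delicate point being the verification that $\tau_n\,u\in D(A)$, for which Lemma~\ref{W A mollifier lemma} is precisely the tool provided.
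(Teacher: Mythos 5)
Your proof is correct, and the "if" direction (mollifier convergence $\Rightarrow$ core) matches the paper's argument essentially verbatim: establish $\tau_n u \in D(A)$ via Lemma~\ref{W A mollifier lemma} and an integration by parts, show $\tau_n u \to u$ in $D(A)$, then mollify and run a diagonal argument.

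The "only if" direction, however, is organized quite differently from the paper's. The paper first uses the core hypothesis itself to extend the commutator identity $A(J_n * u) - J_n * Au = T_n u$ from $u \in C_c^\infty(\Ri^d)$ to $u \in D(A)$: one approximates $u$ by test functions $\phi_j$ in the graph norm, uses the continuous embedding $D(A) \hookrightarrow W^{1,2}(\Ri^d)$ and the boundedness of $T_n$ to pass to the limit, and invokes closedness. Then it shows $T_n \psi \to 0$ for $\psi \in W^{2,2}(\Ri^d)$ (directly from the identity, since $A|_{W^{2,2}}$ is continuous into $L_2$ and $J_n * \psi \to \psi$ in $W^{2,2}$), and concludes via density of $W^{2,2}$ in $W^{1,2}$ and uniform boundedness of $\{T_n\}$. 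You instead obtain the identity for all $u \in D(A)$ without any core input, through the chain $A(J_n*w) = B_2(J_n*w) = J_n*(B_2 w) + T_n w = J_n*(Aw) + T_n w$, relying on $A = B_2$ on $W^{2,2}(\Ri^d)$, on $D(A) \subset D(B_2)$ with $B_2 = A$ there (which indeed follows as you say, mirroring the proof of Proposition~\ref{W Ap = Bp}), and on the correct observation that the derivation of \eqref{W mollifying difference} in Proposition~\ref{W smoothen} uses compact support only to force $T_n u \to 0$, not to establish the algebraic identity itself. From there you derive an $n$-uniform bound $\|A(J_n*w)\|_2 \leq C(\|w\|_2 + \|Aw\|_2)$ and finish with a three-term splitting that does invoke the core hypothesis. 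Both routes are sound; yours trades the paper's closedness-and-density machinery for the duality with $B_2$, which is arguably cleaner. One small remark: given that you have the identity $A(J_n*u) - J_n*(Au) = T_n u$ for all $u \in D(A)$ independently of the core hypothesis, you could equally well close the argument the way the paper does, via $T_n\psi \to 0$ for $\psi \in W^{2,2}(\Ri^d)$ and density of $W^{2,2}$ in $W^{1,2}$, which would make the core hypothesis superfluous in this direction; this is consistent with the paper's later Theorem~\ref{W smoothness to core}. Also, the operator $B_2$ and the identity \eqref{W mollifying difference} live in Section~4, not Section~\ref{S4.3}, a harmless citation slip.
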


\begin{proof}
($\Longrightarrow$) It is well-known that $\lim_{n \to \infty} J_n * (Au) = Au$ in $L_2(\Ri^d)$.
Therefore it suffices to show that $\lim_{n \to \infty} \|A(J_n * u) - J_n * (Au)\|_2 = 0$.

By a similar calculation as in \eqref{W mollifying difference} we yield 
\begin{equation} \label{W mollifying difference on A 1}
A(J_n * u) - J_n * Au = T_n u
\end{equation}
for all $n \in \Ni$ and $u \in C_c^\infty(\Ri^d)$, where the bounded operator $T_n: W^{1,2}(\Ri^d) \longrightarrow L_2(\Ri^d)$ is defined by \eqref{W Tn definition}.
Let $n \in \Ni$ and $u \in D(A)$.
Since $C_c^\infty(\Ri^d)$ is a core for $D(A)$, there exists a sequence $\{\phi_j\}_{j \in \Ni}$ in $C_c^\infty(\Ri^d)$ such that 
\begin{equation} \label{W phij to u in D(A)}
\lim_{j \to \infty} \phi_j = u
\end{equation}
in $D(A)$.
By hypothesis $D(A) \subset W^{1,2}(\Ri^d)$.
Therefore the inclusion $D(A) \hookrightarrow W^{1,2}(\Ri^d)$ is continuous.
It follows from \eqref{W phij to u in D(A)} that $\lim_{j \to \infty} \phi_j = u$ in $W^{1,2}(\Ri^d)$.
Recall that the operator $T_n$ is bounded.
As a consequence $\lim_{j \to \infty} T_n \phi_j = T_n u$ in $L_2(\Ri^d)$.
We also derive from \eqref{W phij to u in D(A)} that $\lim_{j \to \infty} J_n * \phi_j = J_n * u$ in $L_2(\Ri^d)$ and $\lim_{j \to \infty} J_n * (A \phi_j) = J_n * (Au)$ in $L_2(\Ri^d)$.
Therefore \eqref{W mollifying difference on A 1} gives
\[
\lim_{j \to \infty} A(J_n * \phi_j)
= \lim_{j \to \infty} \big( T_n \phi_j + J_n * (A \phi_j) \big)
= T_n u + J_n * (Au)
\]
in $L_2(\Ri^d)$.
Since $T_n$ is bounded, it is also closed.
Hence $J_n * u \in D(A)$ and $A(J_n * u) = T_n u + J_n * (Au)$.
That is, 
\begin{equation} \label{W mollifying difference on A 2}
A(J_n * u) - J_n * Au = T_n u
\end{equation}
also holds for all $n \in \Ni$ and $u \in D(A)$.

Let $\psi \in W^{2,2}(\Ri^d)$.
Then $\lim_{n \to \infty} J_n * \psi = \psi$ in $W^{2,2}(\Ri^d)$.
Consequently $\lim_{n \to \infty} A(J_n * \psi) = A \psi$ in $L_2(\Ri^d)$.
Also $\lim_{n \to \infty} J_n * (A \psi) = A \psi$ in $L_2(\Ri^d)$.
Therefore it follows from \eqref{W mollifying difference on A 2} that $\lim_{n \to \infty} \|T_n u\|_2 = 0$.
This is for all $\psi \in W^{2,2}(\Ri^d)$.
Since $W^{2,2}(\Ri^d)$ is dense in $W^{1,2}(\Ri^d)$ and $\{T_n\}_{n \in \Ni}$ is bounded by Lemma \ref{W Tn}, we deduce that $\lim_{n \to \infty} \|T_n u\|_2 = 0$ for all $u \in W^{1,2}(\Ri^d)$.
In particular $\lim_{n \to \infty} \|T_n u\|_2 = 0$ for all $u \in D(A)$ as $D(A) \subset W^{1,2}(\Ri^d)$ by hypothesis.

($\Longleftarrow$) Let $\tau \in C_c^\infty(\Ri^d)$ be such that $0 \leq \tau \leq \one$, $\tau|_{B_1(0)} = 1$ and $\supp \tau \subset B_2(0)$.
Define $\tau_n(x) = \tau(n^{-1} \, x)$ for all $x \in \Ri^d$ and $n \in \Ni$.

Let $n \in \Ni$.
Let $u \in D(A)$ and $\phi \in C_c^\infty(\Ri^d)$.
Then $u \in W^{1,2}(\Ri^d)$ and hence $\tau_n \, u \in W^{1,2}(\Ri^d)$.
Moreover
\[
\gota(\tau_n \, u, \phi)
= \sum_{k,l=1}^d \int_{\Ri^d} c_{kl} \, \D_k (\tau_n \, u) \, \D_l \overline{\phi}
= \sum_{k,l=1}^d \int_{\Ri^d} c_{kl} \, ((\D_k \tau_n) \, u + \tau_n \, \D_k u) \, \D_l \overline{\phi}
= (f_n, \phi),
\]
where 
\[
f_n 
= (A \tau_n) \, u + \tau_n \, Au 
	- \sum_{k,l=1}^d c_{kl} \, (\D_k \tau_n) \, \D_l u - \sum_{k,l=1}^d c_{kl} \, (\D_l \tau_n) \, \D_k u
\]
and we used Lemma \ref{W A mollifier lemma} in the last equality.
Since $f_n \in L_2(\Ri^d)$, we have $\tau_n \, u \in D(A)$ and $A(\tau_n \, u) = f_n$.
Next we will show that $\lim_{n \to \infty} f_n = Au$ in $L_2(\Ri^d)$.
Clearly $\lim_{n \to \infty} \tau_n \, Au = Au$ in $L_2(\Ri^d)$.
Note that
\begin{eqnarray*}
\|(A \tau_n) \, u\|_2
& = & \Big\| - \sum_{k,l=1}^d (\D_l (c_{kl} \, \D_k \tau_n)) \, u \Big\|_2
	= \Big\| \sum_{k,l=1}^d \Big( (\D_l c_{kl}) \, \D_k \tau_n + c_{kl} \, \D_l \D_k \tau_n \Big) \, u \Big\|_2
\\
& \leq & \sum_{k,l=1}^d \|c_{kl}\|_{W^{2,\infty}} \, \Big( \frac{1}{n} \, \|\D_k \tau\|_\infty 
	+ \frac{1}{n^2} \, \|\D_l \D_k \tau\|_\infty \Big) \, \|u\|_2.
\end{eqnarray*}
Similarly 
\[
\Big\| \sum_{k,l=1}^d c_{kl} \, (\D_k \tau_n) \, \D_l u \Big\|_2
\leq \frac{1}{n} \, \sum_{k,l=1}^d \|c_{kl}\|_\infty \, \|\D_k \tau\|_\infty \, \|\D_l u\|_2
\]
and
\[
\Big\| \sum_{k,l=1}^d c_{kl} \, (\D_l \tau_n) \, \D_k u \Big\|_2
\leq \frac{1}{n} \, \sum_{k,l=1}^d \|c_{kl}\|_\infty \, \|\D_l \tau\|_\infty \, \|\D_k u\|_2.
\]
It follows that these three terms go to 0 in $L_2(\Ri^d)$ as $n$ tends to infinity.
Hence 
\begin{equation} \label{W A limit 1}
\lim_{n \to \infty} \|A(\tau_n \, u) - Au\|_2 = 0.
\end{equation}

Finally we will show that $C_c^\infty(\Ri^d)$ is a core for $A$.
Let $u \in D(A)$.
The hypothesis gives 
\begin{equation} \label{W A limit 2}
\lim_{k \to \infty} \|A(J_k * (\tau_n \, u)) - A(\tau_n \, u)\|_2 = 0
\end{equation}
for all $n \in \Ni$.
Let $\varepsilon > 0$.
By \eqref{W A limit 1} we can choose an $n \in \Ni$ such that $\|A(\tau_n \, u) - Au\|_2 < \frac{\varepsilon}{2}$.
Next we use \eqref{W A limit 2} to choose a $k \in \Ni$ such that $\|A(J_k * (\tau_n \, u)) - A(\tau_n \, u)\|_2 < \frac{\varepsilon}{2}$.
Then 
\[
\|A(J_k * (\tau_n \, u)) - Au\|_2 
\leq \|A(J_k * (\tau_n \, u)) - A(\tau_n \, u)\|_2 + \|A(\tau_n \, u) - Au\|_2 
< \varepsilon.
\]
Note that $J_k * (\tau_n \, u) \in C_c^\infty(\Ri^d)$.
Hence $C_c^\infty(\Ri^d)$ is indeed a core for $A$.
\end{proof}

Let $\delta \in (0, 1)$.
Define
\[
C_\delta = (R_s + i \delta \, B_a) + i \, (B_s - i \, R_a).
\]

\begin{lemm}
The matrix $C_\delta$ takes values in $\Sigma_\psi$, where $\psi \in [0, \frac{\pi}{2})$ is such that $\tan \psi = \frac{1}{\delta} \, \tan\theta$.
\end{lemm}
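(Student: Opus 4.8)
The plan is to write $C_\delta$ in its Hermitian decomposition and compare it term by term with that of $C$. Since $B_a$ is real and anti-symmetric, $i B_a$ is self-adjoint, so $R_s + i\delta B_a$ and $B_s - i R_a$ are self-adjoint, and a direct computation gives $C_\delta = (R_s + i\delta B_a) + i\,(B_s - i R_a)$ with $\R C_\delta = R_s + i\delta B_a$ and $\I C_\delta = B_s - i R_a = \I C$; that is, $C_\delta$ is obtained from $C$ by scaling only the $B_a$-term of $\R C$ by $\delta$, leaving $\I C$ untouched. By the usual characterisation of sectoriality it then suffices to prove, for every $\xi \in \Ci^d$, that $((\R C_\delta)\,\xi,\xi) \geq 0$ and $|((\I C_\delta)\,\xi,\xi)| \leq (\tan\psi)\,((\R C_\delta)\,\xi,\xi)$.

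For the first inequality I would expand as in the proof of Lemma \ref{W Ba < Rs}: writing $\xi = \xi_1 + i\,\xi_2$ with $\xi_1,\xi_2 \in \Ri^d$ and using that $R_s$ is symmetric and $B_a$ anti-symmetric,
\[
((\R C_\delta)\,\xi,\xi) = (R_s\,\xi_1,\xi_1) + (R_s\,\xi_2,\xi_2) + 2\delta\,(B_a\,\xi_1,\xi_2),
\]
and the same identity with $\delta$ replaced by $1$ gives $((\R C)\,\xi,\xi)$. Subtracting,
\[
((\R C_\delta)\,\xi,\xi) - \delta\,((\R C)\,\xi,\xi) = (1-\delta)\big((R_s\,\xi_1,\xi_1) + (R_s\,\xi_2,\xi_2)\big) \geq 0,
\]
because $\delta \in (0,1)$ and $R_s \geq 0$ (take $\xi$ real in \eqref{values in sector}). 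Hence $((\R C_\delta)\,\xi,\xi) \geq \delta\,((\R C)\,\xi,\xi) \geq 0$.

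For the second inequality, since $\I C_\delta = \I C$ and $C$ takes values in $\Sigma_\theta$,
\[
|((\I C_\delta)\,\xi,\xi)| = |((\I C)\,\xi,\xi)| \leq (\tan\theta)\,((\R C)\,\xi,\xi) \leq \frac{\tan\theta}{\delta}\,((\R C_\delta)\,\xi,\xi) = (\tan\psi)\,((\R C_\delta)\,\xi,\xi),
\]
where the middle step is the bound of the previous paragraph and the last equality is the definition of $\psi$; when $\theta = 0$ the hypothesis forces $\I C = 0$ and the chain is trivial with $\psi = 0$. This shows $C_\delta$ takes values in $\Sigma_\psi$. The only point requiring care is the Hermitian bookkeeping — checking that $\R C_\delta = R_s + i\delta B_a$ and that $\I C_\delta$ is exactly $\I C$, so that passing from $C$ to $C_\delta$ leaves the numerator $|((\I C)\,\xi,\xi)|$ unchanged while the relevant quadratic form in the denominator shrinks by at most the factor $\delta$; the rest is the two elementary computations above, needing no input beyond $R_s \geq 0$ and $\delta < 1$.
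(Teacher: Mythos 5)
Your proof is correct and follows essentially the same route as the paper's: both exploit that $\I C_\delta = \I C$ and reduce the sectorial estimate to $((\R C_\delta)\,\xi,\xi) \geq \delta\,((\R C)\,\xi,\xi)$, which comes down to $(1-\delta)\,(R_s\,\xi,\xi) \geq 0$. The only difference is that you explicitly verify $((\R C_\delta)\,\xi,\xi) \geq 0$, a required part of the sectoriality claim that the paper leaves implicit, so your write-up is if anything slightly more careful.
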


\begin{proof}
Let $\xi \in \Ci^d$.
Then
\begin{eqnarray*}
|((\I C_\delta) \, \xi, \xi)|
& = & |((\I C) \, \xi, \xi)|
	\leq \tan\theta \, ((\R C) \, \xi, \xi)
	= \frac{1}{\delta} \, \tan\theta \, ((\delta \, R_s + i \delta \, B_a) \, \xi, \xi)
\\
& \leq & \frac{1}{\delta} \, \tan\theta \, ((R_s + i \delta \, B_a) \, \xi, \xi)
	= \frac{1}{\delta} \, \tan\theta \, ((\R C_\delta) \, \xi, \xi)
\end{eqnarray*}
since $C$ takes values in $\Sigma_\theta$ and $(R_s \, \xi, \xi) \geq 0$ by Lemma \ref{W Ba < Rs}.
The statement now follows.
\end{proof}

Define the form
\[
\gota_{0,\delta}(u, v) = \int_{\Ri^d} (C_\delta \, \nabla u, \nabla u)
\]
on the domain $D(\gota_{0,\delta}) = C_c^\infty(\Ri^d)$.
Then by the same analysis as in Section \ref{S4.1}, the form $\gota_{0,\delta}$ is closable.
Let $A_\delta$ be the operator associated with the closure of $\gota_{0,\delta}$.
Then we also have that $W^{2,2}(\Ri^d) \subset D(A_\delta)$ and 
\[
A_\delta u = - \sum_{k,l=1}^d \D_l ((C_\delta)_{kl} \, \D_k u)
\]
for all $u \in W^{2,2}(\Ri^d)$.
Define 
\[
H_\delta = - \sum_{k,l=1}^d \D_k (\overline{(C_\delta)_{kl}} \, \D_l u)
\]
on the domain $D(H_\delta) = C_c^\infty(\Ri^d)$.
Then we have the following.

\begin{prop} \label{W Adelta core lemma}
The space $C_c^\infty(\Ri^d)$ is a core for $A_\delta$.
Furthermore $A_\delta = (H_\delta)^*$.
\end{prop}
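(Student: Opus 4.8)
The plan is to deduce the statement from Theorem \ref{W main theorem L2} applied to the matrix $C_\delta$ in place of $C$. First I would check that $C_\delta$ meets the standing hypotheses of the paper with $\theta$ replaced by the angle $\psi$ of the previous lemma: its entries lie in $W^{2,\infty}(\Ri^d)$, being fixed linear combinations of the $c_{kl}$ and their transposes, and by the previous lemma $C_\delta$ takes values in $\Sigma_\psi$ with $\psi\in[0,\tfrac\pi2)$. Consequently every construction and result of Sections 2--6 applies verbatim to $C_\delta$; in particular the operators $H_\delta$, $B_{2,\delta}:=(H_\delta)^*$ and $Z_\delta:=\overline{B_{2,\delta}|_{C_c^\infty(\Ri^d)}}$ are the objects attached to $C_\delta$ exactly as $H_2$, $B_2$, $Z$ are attached to $C$.

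The substantive point is to verify that $C_\delta$ satisfies condition \ref{C=phi C tilde} of Theorem \ref{W main theorem L2} via the trivial factorisation $C_\delta=\phi\,\widetilde C$ with $\phi\equiv 1$ (so $\theta_1=0$ and $\phi(x)=1\in\Sigma_0$) and $\widetilde C=C_\delta$, $\theta_2=\psi$; the condition $\theta_1+\theta_2=\psi$ is then automatic. For this factorisation $\widetilde R_s=R_s$ and $\R\widetilde C=\R C_\delta$, and a short computation using $B_a^T=-B_a$, $B_s^T=B_s$, $R_a^T=-R_a$ shows that $i\delta B_a$ is Hermitian while $iB_s+R_a$ is anti-Hermitian, whence $\R C_\delta=R_s+i\delta B_a$. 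Therefore
\[
\R C_\delta-(1-\delta)\,R_s=\delta\,(R_s+i\,B_a)=\delta\,\R C\geq 0
\]
as Hermitian matrices, since $C$ taking values in $\Sigma_\theta$ forces $\R C\geq 0$ pointwise. Hence for every $u\in C_c^\infty(\Ri^d)$, writing $U=(\D_l\D_k u)_{1\le k,l\le d}$ (so $U^T=U$ and $\overline U=U^*$) and $M=\R C_\delta-(1-\delta)R_s\geq 0$, one has $\tr(U\,M\,\overline U)=\tr(U\,M\,U^*)=\|U\,M^{1/2}\|_{HS}^2\geq 0$, so that
\[
\tr\big(U\,(\R C_\delta)\,\overline U\big)\geq(1-\delta)\,\tr\big(U\,R_s\,\overline U\big),
\]
which is exactly the inequality required in condition \ref{C=phi C tilde}, with $h=1-\delta>0$.

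With condition \ref{C=phi C tilde} established, Theorem \ref{W main theorem L2} applied to $C_\delta$ yields $A_\delta=B_{2,\delta}=Z_\delta$ and that $C_c^\infty(\Ri^d)$ is a core for $A_\delta$; since $B_{2,\delta}=(H_\delta)^*$ by construction, both assertions follow at once. I do not expect a genuine obstacle here: the only care needed is to confirm that $C_\delta$ inherits the standing assumptions and to keep straight which ``$R_s$'' and ``$B_a$'' are meant when Theorem \ref{W main theorem L2} is re-applied (namely the original $R_s$ and $\delta B_a$). The one idea that makes the argument work is the elementary identity $\R C_\delta=(1-\delta)R_s+\delta\,\R C$, which turns the otherwise delicate hypothesis of condition \ref{C=phi C tilde} into a triviality for the constant factorisation $\phi\equiv 1$.
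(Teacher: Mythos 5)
Your proof is correct and follows essentially the same route as the paper: both verify condition (ii) of Theorem \ref{W main theorem L2} for $C_\delta$ via the trivial factorisation $\phi\equiv 1$, $\widetilde C=C_\delta$, and both rest on the identity $\R C_\delta=(1-\delta)R_s+\delta\,\R C$ together with $\R C\geq 0$ to obtain $\tr(U\,(\R C_\delta)\,\overline U)\geq(1-\delta)\tr(U\,R_s\,\overline U)$. The paper states the trace inequality directly without unpacking the Hermitian decomposition of $C_\delta$; you spell that step out, but the substance is the same.
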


\begin{proof}
We note that 
\[
\tr (U \, (\R C_\delta) \, \overline{U})
= (1 - \delta) \, \tr (U \, R_s \, \overline{U}) 
	+ \delta \, \tr (U \, (\R C) \, \overline{U})
\geq (1 - \delta) \, \tr (U \, R_s \, \overline{U}) 
\]
for all $u \in C_c^\infty(\Ri^d)$, where $U = (\D_l \D_k u)_{1 \leq k,l \leq d}$.
That is, $C_\delta$ satisfies Condition \ref{C=phi C tilde} in Theorem \ref{W main theorem L2}.
Hence $C_c^\infty(\Ri^d)$ is a core for $A_\delta$ and $A_\delta = (H_\delta)^*$ by Theorem \ref{W main theorem L2}.
\end{proof}

\begin{lemm} \label{W A subset A delta}
Suppose $D(A) \subset W^{1,2}(\Ri^d)$.
Then $D(A) \subset D(A_\delta) \cap D((B_a)^\op)$ and
\[
A u = A_\delta u + i (1 - \delta) \, (B_a)^\op u
\]
for all $u \in D(A)$.
\end{lemm}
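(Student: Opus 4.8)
The plan is to reduce everything to the single algebraic fact that $C - C_\delta = i(1-\delta)\,B_a$, which one reads off by comparing $C = (R_s + R_a) + i(B_s + B_a)$ with $C_\delta = (R_s + R_a) + i(B_s + \delta B_a)$. On the operator side this should translate, for $\phi \in C_c^\infty(\Ri^d)$, into the identities $(Au, \phi) = (u, H_2 \phi)$ (the $L_2$-form computation already used in the proof of Proposition~\ref{W Ap = Bp}), $((B_a)^\op u, \phi) = (u, L\phi)$ (the definition $(B_a)^\op = L^*$), and $H_\delta = H_2 + i(1-\delta)\,L$ on $C_c^\infty(\Ri^d)$. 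Combining these with the duality description $A_\delta = (H_\delta)^*$ from Proposition~\ref{W Adelta core lemma} should yield the claim directly.

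First I would record the easy facts. Since $D(A) \subset W^{1,2}(\Ri^d)$ by hypothesis, each $u \in D(A)$ lies in $W^{1,2}(\Ri^d) \subset D((B_a)^\op)$, and by the formula for $(B_a)^\op$ on $W^{1,2}(\Ri^d)$ established in the text preceding Lemma~\ref{W Ba estimate} we have $(B_a)^\op u = - \sum_{k,l=1}^d (\D_l B_a)_{kl}\, \D_k u \in L_2(\Ri^d)$, using $(B_a)_{kl} \in W^{2,\infty}(\Ri^d)$ and $\D_k u \in L_2(\Ri^d)$. Hence $f := Au - i(1-\delta)\,(B_a)^\op u$ is a well-defined element of $L_2(\Ri^d)$. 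Next I would record the operator identity on test functions: from $C - C_\delta = i(1-\delta)\,B_a$ and the fact that $B_a$ is real-valued we get $\overline{(C_\delta)_{kl}} - \overline{c_{kl}} = i(1-\delta)\,(B_a)_{kl} = i(1-\delta)\,\overline{(B_a)_{kl}}$, so that $H_\delta \phi = H_2 \phi + i(1-\delta)\, L\phi$ for all $\phi \in C_c^\infty(\Ri^d)$ (note that no cancellation of second-order terms is needed for this step).

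The computation then runs as follows: for $\phi \in C_c^\infty(\Ri^d)$,
\begin{align*}
(f, \phi)
&= (Au, \phi) - i(1-\delta)\,((B_a)^\op u, \phi) \\
&= (u, H_2 \phi) - i(1-\delta)\,(u, L\phi) \\
&= (u, H_2 \phi + i(1-\delta)\,L\phi) = (u, H_\delta \phi),
\end{align*}
where the third equality uses antilinearity in the second argument together with $\overline{i(1-\delta)} = -i(1-\delta)$, and the last uses the operator identity above. Since $f \in L_2(\Ri^d)$, this shows $u \in D((H_\delta)^*) = D(A_\delta)$, the equality of domains being Proposition~\ref{W Adelta core lemma}, and $A_\delta u = f$; rearranging gives $Au = A_\delta u + i(1-\delta)\,(B_a)^\op u$ and in particular $u \in D(A_\delta) \cap D((B_a)^\op)$.

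I do not expect a serious obstacle; the argument is essentially bookkeeping. The only non-formal ingredient is Proposition~\ref{W Adelta core lemma}, namely that $A_\delta$ is the full adjoint $(H_\delta)^*$, which itself rests on Theorem~\ref{W main theorem L2}. The point deserving a little care is the identity $(Au, \phi) = (u, H_2 \phi)$: one should use $D(A) \subset D(\overline{\gota_0})$, write $(Au, \phi) = \overline{\gota_0}(u, \phi)$, and justify the integration by parts moving the $l$-derivative onto $\phi$ (legitimate because $\phi \in C_c^\infty(\Ri^d)$, $c_{kl} \in W^{2,\infty}(\Ri^d)$ and $u, \D_k u \in L_2(\Ri^d)$), or simply invoke the identical step from the proof of Proposition~\ref{W Ap = Bp}. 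Beyond that, one must keep track of the complex conjugates in the definitions of $H_2$, $H_\delta$ and $L$, and note that the antisymmetry of $B_a$, although not used in the identity $H_\delta = H_2 + i(1-\delta)L$ on test functions, is exactly what makes $(B_a)^\op$ a first-order operator and hence $(B_a)^\op u \in L_2(\Ri^d)$ for $u \in W^{1,2}(\Ri^d)$.
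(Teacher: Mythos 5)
Your proof is correct and takes essentially the same route as the paper: both establish the duality identity $(Au,\phi)=(u,H_2\phi)$ via $D(A)\subset W^{1,2}(\Ri^d)$, use the algebraic relation $H_\delta=H_2+i(1-\delta)L$ on $C_c^\infty(\Ri^d)$, and conclude via $A_\delta=(H_\delta)^*$ from Proposition~\ref{W Adelta core lemma}. The only cosmetic difference is that the paper passes through the intermediate step ``$u\in D(B_2)$ with $B_2u=Au$'' before assembling the inner-product computation, whereas you fold this directly into the chain $(f,\phi)=(u,H_\delta\phi)$; the substance is identical.
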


\begin{proof}
Recall that the operators $H_2$ and $L$ are defined by \eqref{W Hq} and \eqref{W L} respectively.
First note that $D(A) \subset W^{1,2}(\Ri^d) \subset D((B_a)^\op)$.
Moreover, the condition $D(A) \subset W^{1,2}(\Ri^d)$ implies that 
\[
(u, H_2 \phi)
= - \int_{\Ri^d} u \, \D_k (c_{kl} \, \D_l \overline{\phi})
= \int_{\Ri^d} c_{kl} \, (\D_k u) \, \D_l \overline{\phi}
= \gota(u, \phi)
= (Au, \phi)
\]
for all $u \in D(A)$ and $\phi \in C_c^\infty(\Ri^d)$, where we used integration by parts in the second step.
Since $Au \in L_2(\Ri^d)$, we conclude that $u \in D(B_2)$ and 
\begin{equation} \label{S4.6 A in B2}
B_2 u = Au
\end{equation}
for all $u \in D(A)$.
Therefore we also have $D(A) \subset D(B_2)$.

Next let $u \in D(A)$.
Then
\begin{eqnarray*}
(u, H_\delta \phi)
& = & (u, H_2 \phi) - i (1 - \delta) \, (u, L \phi)
	= (B_2 u, \phi) - i (1 - \delta) \, \big( (B_a)^\op u, \phi \big)
\\
& = & \big( B_2 u - i (1 - \delta) \, (B_a)^\op u, \phi \big)
\end{eqnarray*}
for all $\phi \in C_c^\infty(\Ri^d)$.
Note that $B_2 u - i (1 - \delta) \, (B_a)^\op u \in L_2(\Ri^d)$.
Hence $u \in D(A_\delta)$ and 
\[
A_\delta u = B_2 u - i (1 - \delta) \, (B_a)^\op u = A u - i (1 - \delta) \, (B_a)^\op u,
\]
where we used \eqref{S4.6 A in B2} in the last step.
The lemma now follows.
\end{proof}

\begin{lemm} \label{W A delta replacement condition}
Suppose $D(A) \subset W^{1,2}(\Ri^d)$.
Then there exists a $\delta_0 \in (0, 1)$ such that for all $\delta \in [\delta_0, 1)$ there exists an $M > 0$ such that $D(A_\delta) \subset W^{1,2}(\Ri^d)$ and $\|u\|_{W^{1,2}} \leq M \, \|u\|_{D(A_\delta)}$ for all $u \in D(A_\delta)$.
\end{lemm}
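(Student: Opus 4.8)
The plan is to realise $A_\delta$ as a relatively bounded perturbation of $A$ and to feed the hypothesis $D(A) \subset W^{1,2}(\Ri^d)$ into this through the closed graph theorem. First I would note that $D(A)$, equipped with the graph norm, and $W^{1,2}(\Ri^d)$ are Banach spaces and the inclusion $D(A) \hookrightarrow W^{1,2}(\Ri^d)$ has closed graph: a sequence converging in both spaces converges in $L_2(\Ri^d)$ to a common limit. Hence the closed graph theorem gives an $M_0 > 0$ with $\|u\|_{W^{1,2}} \leq M_0 \, \|u\|_{D(A)} = M_0 \, (\|u\|_2 + \|Au\|_2)$ for all $u \in D(A)$. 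Since $(B_a)_{kl} \in W^{2,\infty}(\Ri^d)$ and $(B_a)^\op v = - \sum_{k,l=1}^d (\D_l B_a)_{kl} \, \D_k v$ on $W^{1,2}(\Ri^d)$, there is a $C' > 0$ with $\|(B_a)^\op v\|_2 \leq C' \, \|v\|_{W^{1,2}}$ for all $v \in W^{1,2}(\Ri^d)$, and combining the two bounds yields $\|(B_a)^\op u\|_2 \leq C' M_0 \, (\|u\|_2 + \|Au\|_2)$ for $u \in D(A)$.

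Next I would set $\widetilde{A}_\delta := A + i(1-\delta)\,(B_a)^\op$ on $D(\widetilde{A}_\delta) = D(A)$, so that $\widetilde{A}_\delta \subset A_\delta$ by Lemma \ref{W A subset A delta}, and choose $\delta_0 \in (0,1)$ with $3 (1-\delta_0) \, C' M_0 < 1$. For $\delta \in [\delta_0, 1)$ the perturbation $P := i(1-\delta)\,(B_a)^\op$ is $A$-bounded with relative bound $(1-\delta)\,C'M_0 < \tfrac13$, so $\widetilde{A}_\delta$ is closed. Using that $A$ is $m$-accretive (hence $\|(1+A)^{-1}\| \leq 1$ and $\|A(1+A)^{-1}\| \leq 2$), one gets $\|P(1+A)^{-1}\| \leq 3(1-\delta)\,C'M_0 < 1$, so $I + P(1+A)^{-1}$ is invertible and $1 + \widetilde{A}_\delta : D(A) \to L_2(\Ri^d)$ is surjective. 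To upgrade $\widetilde{A}_\delta \subset A_\delta$ to $D(\widetilde{A}_\delta) = D(A_\delta)$, recall that $A_\delta$, being associated with the closure of the sectorial form $\gota_{0,\delta}$, is $m$-accretive, so $1 + A_\delta$ is injective; given $f \in L_2(\Ri^d)$, take $v \in D(A)$ with $(1 + \widetilde{A}_\delta) v = f$ and $u \in D(A_\delta)$ with $(1 + A_\delta) u = f$, and since $v \in D(A) \subset D(A_\delta)$ with $A_\delta v = \widetilde{A}_\delta v$ we obtain $(1 + A_\delta)(u - v) = 0$, hence $u = v \in D(A)$. Therefore $D(A_\delta) = D(A) \subset W^{1,2}(\Ri^d)$.

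Finally, for $u \in D(A_\delta) = D(A)$, Lemma \ref{W A subset A delta} gives $Au = A_\delta u + i(1-\delta)\,(B_a)^\op u$, whence $\|Au\|_2 \leq \|A_\delta u\|_2 + (1-\delta)\,C'M_0\,(\|u\|_2 + \|Au\|_2)$; since $(1-\delta)\,C'M_0 < 1$ the term $\|Au\|_2$ can be absorbed, giving $\|Au\|_2 \leq \tfrac{3}{2}\,(\|A_\delta u\|_2 + \|u\|_2)$, and then $\|u\|_{W^{1,2}} \leq M_0\,(\|u\|_2 + \|Au\|_2) \leq M\,\|u\|_{D(A_\delta)}$ with $M = M_0(1 + \tfrac32)$ (say). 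I expect the main obstacle to be the domain equality $D(A_\delta) = D(A)$: the inclusion $D(A) \subset D(A_\delta)$ is exactly Lemma \ref{W A subset A delta}, but the reverse inclusion forces one to combine the surjectivity of $1 + \widetilde{A}_\delta$ (which is where the choice of $\delta_0$ and the relative-bound estimate are spent) with the $m$-accretivity of $A_\delta$; the remaining manipulations are routine.
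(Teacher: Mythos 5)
Your proof is correct, modulo a sign slip, but takes a genuinely different route from the paper's. First, the slip: by Lemma \ref{W A subset A delta} we have $Au = A_\delta u + i(1-\delta)\,(B_a)^\op u$, so the operator that agrees with $A_\delta$ on $D(A)$ is $\widetilde{A}_\delta := A - i(1-\delta)\,(B_a)^\op$, not $A + i(1-\delta)\,(B_a)^\op$; as written, the asserted inclusion $\widetilde{A}_\delta \subset A_\delta$ fails. Since only norm bounds on the perturbation $i(1-\delta)\,(B_a)^\op$ are used downstream, fixing the sign does not change any estimate.

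On the comparison: the paper proves the inequality $\|u\|_{W^{1,2}} \leq M\,\|u\|_{D(A_\delta)}$ directly for $u \in D(A)$ by writing $\|Au\|_2 \leq \|A_\delta u\|_2 + (1-\delta)\,\|(B_a)^\op u\|_2$ and absorbing the resulting $(1-\delta)\,M_1 M_2\,\|u\|_{W^{1,2}}$ term (hence $\delta_0$ is chosen so that $(1-\delta_0)\,M_1 M_2 \leq \tfrac12$); this gives the inequality on $C_c^\infty(\Ri^d)$ in particular, and it is then \emph{extended} to all of $D(A_\delta)$ by combining the core property of $A_\delta$ from Proposition~\ref{W Adelta core lemma} with completeness of $W^{1,2}(\Ri^d)$, which simultaneously yields $D(A_\delta) \subset W^{1,2}(\Ri^d)$. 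You instead prove the stronger fact $D(A_\delta) = D(A)$ via perturbation theory: $\widetilde{A}_\delta$ is surjective after shift by a Neumann-series argument using $m$-accretivity of $A$, and then injectivity of $1 + A_\delta$ (again from $m$-accretivity, this time of $A_\delta$) forces $D(A_\delta) \subset D(A)$. Your approach buys the sharper domain identification at the cost of the relatively bounded perturbation machinery; the paper's approach is lighter because it re-uses the core property of $A_\delta$ already established and sidesteps any domain comparison by simply extending a bound from a core by completeness. Either argument proves the lemma.
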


\begin{proof}
Since $D(A) \subset W^{1,2}(\Ri^d)$, there exists an $M_1 > 0$ such that $\|u\|_{W^{1,2}} \leq M_1 \, \|u\|_{D(A)}$ for all $u \in D(A)$ by the closed graph theorem.
Similarly the inclusion $W^{1,2}(\Ri^d) \subset D((B_a)^\op)$ implies that there exists an $M_2 > 0$ which satisfies $\|u\|_{D((B_a)^\op)} \leq M_2 \, \|u\|_{W^{1,2}}$ for all $u \in D((B_a)^\op)$.
Let $\delta_0 = (1 - \frac{1}{2 M_1 M_2}) \vee \frac{1}{2}$ and $\delta \in [\delta_0, 1)$.
If $u \in D(A)$ then $u \in D(A_\delta)$ by Lemma~\ref{W A subset A delta}.
Therefore
\pagebreak[1]
\begin{eqnarray*}
\|u\|_{W^{1,2}}
& \leq & M_1 \, (\|u\|_2 + \|Au\|_2)
\leq M_1 \, (\|u\|_2 + \|A_\delta u\|_2 + (1-\delta) \, \|(B_a)^\op u\|_2)
\\
& = & M_1 \, \|u\|_{D(A_\delta)} + (1-\delta) \, M_1 \|(B_a)^\op u\|_2
\leq M_1 \, \|u\|_{D(A_\delta)} + (1-\delta) \, M_1 \, M_2 \, \|u\|_{W^{1,2}}
\end{eqnarray*}
for all $u \in D(A)$.
It follows that 
\[
\|u\|_{W^{1,2}} \leq \frac{M_1}{1 - (1-\delta) \, M_1 M_2} \, \|u\|_{D(A_\delta)}
\]
for all $u \in D(A)$.
In particular 
\begin{equation} \label{W extension to domain Adelta}
\|u\|_{W^{1,2}} \leq \frac{M_1}{1 - (1-\delta) \, M_1 M_2} \, \|u\|_{D(A_\delta)}
\end{equation}
for all $u \in C_c^\infty(\Ri^d)$.
Note that $C_c^\infty(\Ri^d)$ is a core for $A_\delta$ by Lemma \ref{W Adelta core lemma} and the space $W^{1,2}(\Ri^d)$ is complete.
Consequently \eqref{W extension to domain Adelta} implies that $D(A_\delta) \subset W^{1,2}(\Ri^d)$ and 
\[
\|u\|_{W^{1,2}} \leq \frac{M_1}{1 - (1-\delta) \, M_1 M_2} \, \|u\|_{D(A_\delta)}.
\]
for all $u \in D(A_\delta)$ as required.
\end{proof}

\begin{lemm} \label{W A delta mollifier}
Let $u \in D(A)$.
Then $\lim_{n \to \infty} A_\delta(J_n * u) = A_\delta u$ in $L_2(\Ri^d)$.
\end{lemm}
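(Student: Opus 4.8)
The plan is to deduce the statement from Proposition~\ref{W A mollifier}, applied not to $A$ itself but to the operator $A_\delta$. The first point to nail down is that $A_\delta$ is an operator of exactly the kind studied throughout the paper. Indeed $C_\delta = (R_s + i\delta\, B_a) + i\,(B_s - i\,R_a)$ has all entries in $W^{2,\infty}(\Ri^d)$, since $R_s,R_a,B_s,B_a$ are built from the $W^{2,\infty}$-coefficients of $C$, and by the lemma preceding the definition of $A_\delta$ the matrix $C_\delta$ takes values in the sector $\Sigma_\psi$ with $\psi\in[0,\tfrac{\pi}{2})$. Hence every structural hypothesis imposed on $A$ holds for $A_\delta$, and in particular the whole machinery behind Proposition~\ref{W A mollifier} --- the bounded operators $T_n\colon W^{1,2}(\Ri^d)\to L_2(\Ri^d)$ and the identity $A_\delta(J_n*v)-J_n*(A_\delta v)=T_n v$ for $v\in C_c^\infty(\Ri^d)$ --- transfers verbatim with $C$ replaced by $C_\delta$.

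Next I would invoke Lemma~\ref{W A delta replacement condition} to secure the one nontrivial hypothesis of Proposition~\ref{W A mollifier} in this setting, namely $D(A_\delta)\subset W^{1,2}(\Ri^d)$; this is where the parameter $\delta$ must be taken in the range $[\delta_0,1)$ furnished there, under the standing assumption $D(A)\subset W^{1,2}(\Ri^d)$. Combining this with Proposition~\ref{W Adelta core lemma}, which says $C_c^\infty(\Ri^d)$ is a core for $A_\delta$, the forward implication of the $A_\delta$-version of Proposition~\ref{W A mollifier} yields $\lim_{n\to\infty}A_\delta(J_n*v)=A_\delta v$ in $L_2(\Ri^d)$ for every $v\in D(A_\delta)$. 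Finally, Lemma~\ref{W A subset A delta} gives $D(A)\subset D(A_\delta)$ (again using $D(A)\subset W^{1,2}(\Ri^d)$), so specializing the previous limit to $v=u\in D(A)$ finishes the proof.

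There is no analytic obstacle of substance here; the only step that requires genuine care is checking that $A_\delta$ really satisfies all the hypotheses placed on $A$ --- smoothness of the coefficients, sectoriality with angle strictly less than $\tfrac{\pi}{2}$, and $D(A_\delta)\subset W^{1,2}(\Ri^d)$ --- so that Proposition~\ref{W A mollifier} may legitimately be quoted for it. If one preferred not to quote the proposition for $A_\delta$, the same argument can be run in the open: establish $A_\delta(J_n*v)-J_n*(A_\delta v)=T_n v$ for $v\in C_c^\infty(\Ri^d)$ by the computation of \eqref{W mollifying difference}, extend this identity to $v\in D(A_\delta)$ via the core property of $C_c^\infty(\Ri^d)$ for $A_\delta$ together with the continuous embedding $D(A_\delta)\hookrightarrow W^{1,2}(\Ri^d)$, and conclude from boundedness of $\{T_n\}_{n\in\Ni}$ and $T_n v\to 0$ for $v$ in the dense subspace $W^{2,2}(\Ri^d)$ of $W^{1,2}(\Ri^d)$; but quoting Proposition~\ref{W A mollifier} directly is the cleaner route.
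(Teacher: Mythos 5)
Your argument is correct and is exactly the route the paper takes: the paper's proof simply says the argument is the same as the forward direction of Proposition~\ref{W A mollifier}, invoking Proposition~\ref{W Adelta core lemma} for the core property of $A_\delta$ and Lemma~\ref{W A delta replacement condition} for $D(A_\delta)\subset W^{1,2}(\Ri^d)$. You are somewhat more explicit than the paper about the implicit hypotheses --- that $D(A)\subset W^{1,2}(\Ri^d)$ must be in force (it is, in the only application), that $\delta$ must lie in $[\delta_0,1)$ so Lemma~\ref{W A delta replacement condition} applies, and that $D(A)\subset D(A_\delta)$ via Lemma~\ref{W A subset A delta} is what lets one specialize to $u\in D(A)$ --- which is a welcome clarification of a terse proof.
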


\begin{proof}
The proof is the same as that of the `only if' part of Proposition \ref{W A mollifier}.
Note that $C_c^\infty(\Ri^d)$ is a core for $A_\delta$ by Lemma \ref{W Adelta core lemma} and $D(A_\delta) \subset W^{1,2}(\Ri^d)$ by Lemma \ref{W A delta replacement condition}.
\end{proof}

We are now in the position to prove Theorem \ref{W smoothness to core}.

\begin{proof}[Proof of Theorem \ref{W smoothness to core}]
Let $\delta = \delta_0$, where $\delta_0$ is defined as in Lemma \ref{W A delta replacement condition}.
By Lemma \ref{W A delta mollifier} we have $\lim_{n \to \infty} A_\delta(J_n * u) = A_\delta u$ in $L_2(\Ri^d)$ for all $u \in D(A)$.
Furthermore \cite[Proposition 2.1]{ERS5} gives that $\lim_{n \to \infty} (B_a)^\op (J_n * u) = (B_a)^\op u$ in $L_2(\Ri^d)$ for all $u \in D((B_a)^\op)$.
Hence $\lim_{n \to \infty} A(J_n * u) = Au$ in $L_2(\Ri^d)$ for all $u \in D(A)$ as $A \subset A_\delta + i (1 - \delta) \, (B_a)^\op$.
Using Proposition \ref{W A mollifier} we can conclude that $C_c^\infty(\Ri^d)$ is a core for $A$.
\end{proof}

\section{Examples}

In this section we present several applications of Theorems \ref{main theorem higher dimensions}, \ref{W smoothness to core} and \ref{W main theorem L2} in showing the core properties for some specific degenerate elliptic operators in higher dimensions.

\begin{eg}
For all $(x,y) \in \Ri^2$ let $\phi(x,y) = \frac{\pi}{4} \, \cos (\sin (x+y))$.
Let 
\[
C 
= \left( 
\begin{array}{cc}
2 \, \cos \phi + i \, \sin \phi & \sin \phi 
\\
- \sin \phi & 2 \, \cos \phi + i \, \sin \phi
\end{array}
\right)
.
\]
Then $\big( C(x,y) \, \xi, \xi \big) \in \Sigma_{\frac{\pi}{4}}$ for all $(x,y) \in \Ri^2$ and $\xi \in \Ci^2$.
Note that $B_a = 0$.

Consider the form $\gota_0$ defined by
\[
\gota_0(u,v) = \int_{\Ri^2} (C \, \nabla u, \nabla v)
\]
on the domain $D(\gota_0) = C_c^\infty(\Ri^2)$.
Then $\gota_0$ is closable.
Let $A$ be the operator associated with the closure of $\gota_0$ in $L_2(\Ri^2)$.
Since $B_a = 0$, we can extend the contraction $C_0$-semigroup $S$ generated by $-A$ to a contraction $C_0$-semigroup $S^{(p)}$ on $L_p(\Ri^2)$ for all $p \in [4-2 \sqrt{2}, 4+2 \sqrt{2}]$ by Proposition \ref{W Lp extension}.
Let $-A_p$ be the generator of $S^{(p)}$ for all $p \in [4-2 \sqrt{2}, 4+2 \sqrt{2}]$.
Then the space $C_c^\infty(\Ri^2)$ is a core for $A_p$ for all $p \in (4-2 \sqrt{2}, 4+2 \sqrt{2})$ by Theorem \ref{main theorem higher dimensions}.
\end{eg}

\begin{eg}
For all $(x,y) \in \Ri^2$ let
\[
C(x,y)
= \left( 
\begin{array}{cc}
\frac{1}{\sqrt{2}}(1+i) & e^{i \, (x+y)}
\\
i \, e^{-i \, (x+y)} & \frac{1}{\sqrt{2}}(1+i)
\end{array}
\right)
.
\]
Note that 
\begin{equation} \label{W example 2}
C = (1 + i) \, (\R C),
\end{equation}
where 
\[
(\R C)(x,y)
= \left( 
\begin{array}{cc}
\frac{1}{\sqrt{2}} & \frac{\cos (x+y) + \sin (x+y)}{2} - i \frac{\cos (x+y) - \sin (x+y)}{2} 
\\
\frac{\cos (x+y) + \sin (x+y)}{2} + i \frac{\cos (x+y) - \sin (x+y)}{2} & \frac{1}{\sqrt{2}}
\end{array}
\right)
.
\]
Therefore $\big( C(x,y) \, \xi, \xi \big) \in \Sigma_{\frac{\pi}{4}}$ for all $(x,y) \in \Ri^2$ and $\xi \in \Ci^2$.

Consider the form $\gota_0$ defined by
\[
\gota_0(u,v) = \int_{\Ri^2} (C \, \nabla u, \nabla v)
\]
on the domain $D(\gota_0) = C_c^\infty(\Ri^2)$.
Then $\gota_0$ is closable.
Let $A$ be the operator associated with the closure of $\gota_0$ in $L_2(\Ri^2)$.

Using \eqref{W example 2} and the fact that $\R C$ is self-adjoint, we conclude that the space $C_c^\infty(\Ri^2)$ is a core for $A$ by Theorem \ref{W main theorem L2}(i).
\end{eg}

\begin{eg}
Let $c_{kl} \in \Ci$ for all $k,l \in \{1, 2\}$.
Suppose there exists a constant $\mu > 0$ such that
\[
\R (C \, \xi, \xi) \geq \mu \, \|\xi\|^2
\]
for all $\xi \in \Ci^2$, where $C = (c_{kl})_{1 \leq k,l \leq 2}$.
Define $A_1 = \D_x$ and $A_2 = \cos x \, \D_y + \sin x \, \D_z$.
Consider the form $\gota_0$ defined by
\[
\gota_0(u,v) = \sum_{k,l=1}^2 \int_{\Ri^3} c_{kl} (A_k u) \, A_l v
\]
on the domain $D(\gota_0) = C_c^\infty(\Ri^3)$.
Then $\gota_0$ is closable.
Let $A$ be the operator associated with the closure of $\gota_0$ in $L_2(\Ri^3)$.
Then formally
\[
A = - \sum_{k,l=1}^2 c_{kl} A_l \, A_k.
\]
We have $D(A) \subset W^{1,2}(\Ri^3)$.
This follows from the regularity of sub-elliptic operators on Lie groups associated to unitary representations.
Specifically it follows from \cite[Theorem 9.2.II]{ER13} together with \cite[Lemma 6.1]{ER1} and \cite[Theorem 7.2.(VI and V)]{ER1} applied to the standard representation of the covering group of the Euclidean motion group (cf.\ \cite[Example II.5.1]{DER4}).

Hence $C_c^\infty(\Ri^3)$ is a core for $A$ by Theorem \ref{W smoothness to core}.
\end{eg}

\subsection*{Acknowledgements}
I wish to thank Tom ter Elst and Boris Baeumer for giving detailed and valuable comments.


\end{document}